\newcommand{\Exterior}{\mathchoice{{\textstyle\bigwedge}}%
    {{\bigwedge}}%
    {{\textstyle\wedge}}%
    {{\scriptstyle\wedge}}}
\theoremstyle{theorem}
\newtheorem{maintheorem}{Theorem}
\newcommand{\defn}[1]{\textit{\textbf{#1}}}
\title[Hyperk\"ahler metrics on groupoids]
{
Hyperk\"ahler metrics near Lagrangian submanifolds and symplectic groupoids
}
\author{Maxence Mayrand}
\date{November 16, 2021}
\address{Department of Mathematics, University of Toronto, 
Canada}
\email{mayrand@math.toronto.edu}
\begin{document}

\begin{abstract}
The first part of this paper is a generalization of the Feix--Kaledin theorem on the existence of a hyperk\"ahler metric on a neighbourhood of the zero section of the cotangent bundle of a K\"ahler manifold.
We show that the problem of constructing a hyperk\"ahler structure on a neighbourhood of a complex Lagrangian submanifold in a holomorphic symplectic manifold reduces to the existence of certain deformations of holomorphic symplectic structures.
The Feix--Kaledin structure is recovered from the twisted cotangent bundle.
We then show that every holomorphic symplectic groupoid over a compact holomorphic Poisson surface of K\"ahler type has a hyperk\"ahler structure on a neighbourhood of its identity section.
More generally, we reduce the existence of a hyperk\"ahler structure on a symplectic realization of a holomorphic Poisson manifold of any dimension to the existence of certain deformations of holomorphic Poisson structures adapted from Hitchin's unobstructedness theorem.
\end{abstract}

\maketitle

\section{Introduction}

A hyperk\"ahler manifold, as first defined by Calabi
\cite{calabi}, is a Riemannian manifold $(M, g)$ with three complex structures $I, J, K$ that are K\"ahler with respect to $g$ and satisfy the quaternionic identities $I^2 = J^2 = K^2 = IJK = -1$. 
One of the main features of this geometry is its deep connection with symplectic geometry.
In particular, every hyperk\"ahler manifold has an underlying holomorphic symplectic structure, namely $(I, \omega_J + i \omega_K)$, where $\omega_J$ and $\omega_K$ are the K\"ahler forms of $J$ and $K$, respectively.
Conversely, Yau's solution to the Calabi conjecture \cite{yau} implies that every \emph{compact} holomorphic symplectic manifold of K\"ahler type is endowed with a hyperk\"ahler structure, as first observed by Beauville \cite[Proposition 4]{beauville}.

In the non-compact case, there is no such general existence result, and the problem of constructing hyperk\"ahler metrics on non-compact holomorphic symplectic manifolds has generated a lot of research since Calabi's first examples \cite{calabi} on $T^*\CP^n$.
Prominent examples include coadjoint orbits of complex semisimple Lie algebras \cite{kronheimer-coadjoint, kronheimer-nilpotent, biquard-nahm, kovalev-nahm}, cotangent bundles of hermitian symmetric spaces \cite{biquard-gauduchon-symmetric, biquard-gauduchon-geometrie}, the ALE spaces on resolutions of Kleinian singularities \cite{kronheimer-ale} later generalized to Nakajima quiver varieties \cite{nakajima-instantons}, and various moduli spaces in gauge theory, such as moduli spaces of instantons \cite{maciocia}, monopoles \cite{atiyah-hitchin}, Higgs bundles \cite{hitchin-self}, and Nahm's equations \cite{kronheimer-coadjoint, kronheimer-nilpotent, biquard-nahm, kovalev-nahm, kronheimer-cotangent, bielawski-actions}.
Perhaps the most general existence result is that of Feix \cite{feix-cotangent} and Kaledin \cite{kaledin-2, kaledin-1}, who independently showed that the cotangent bundle of any K\"ahler manifold has a hyperk\"ahler structure on a neighbourhood of its zero section. 

The first result of this paper is a generalization of the Feix--Kaledin theorem: we reduce the problem of constructing a hyperk\"ahler metric on a neighbourhood of a complex Lagrangian submanifold in a holomorphic symplectic manifold to the existence of certain deformations of holomorphic symplectic structures (Theorem \ref{lagrangian-theorem}). 
The Feix--Kaledin theorem is recovered from the twisted cotangent bundle (Example \ref{feix-kaledin-example}).
The proof uses twistor theory, as in Feix's proof \cite{feix-cotangent}.
We then use this result to associate a hyperk\"ahler manifold to every compact holomorphic Poisson surface endowed with a K\"ahler metric, as we now explain.

The notion of symplectic groupoids, introduced by Weinstein \cite{weinstein-groupoids}, Karasev \cite{karasev-quantization, karasev-analogues}, and Zakrzewski \cite{zakrzewski-1, zakrzewski-2}, has played a central r\^ole in Poisson geometry, especially for the problem of quantization.
Every smooth Poisson manifold $(X, \sigma)$ has an associated Lie algebroid $(T^*X)_\sigma$ on its cotangent bundle, so one can ask for a Lie groupoid $\G \rightrightarrows X$ integrating it.
If it exists, $\G$ is a \emph{symplectic groupoid}, i.e.\ it is endowed with a canonical symplectic form compatible with multiplication \cite[\S5]{mackenzie-xu}.
Conversely, every symplectic groupoid $\G \rightrightarrows X$ induces a Poisson structure $\sigma$ on $X$ such that $\Lie(\G) = (T^*X)_\sigma$.
Poisson manifolds are then viewed as infinitesimal versions of symplectic groupoids, and we say that $\G$ an \emph{integration} $X$, in analogy with the integration of a Lie algebra to a Lie group.
Not all Poisson manifolds are integrable, but they are all integrable by a symplectic \emph{local} groupoid (the axiomatization of a neighbourhood of the identity section of a symplectic groupoid), as first shown by Karasev \cite{karasev-analogues} and Weinstein \cite{weinstein-groupoids, CDW}. This was also reproved later by Cattaneo--Felder \cite{cattaneo-felder-groupoids} using a Poisson sigma model, which led to a complete criterion for global integrability by Crainic--Fernandes \cite{crainic-fernandes-lie, crainic-fernandes-poisson}.

A symplectic local groupoid is, in particular, a \emph{symplectic realization} \cite{weinstein-local, CDW}, i.e.\ a symplectic manifold $M$ together with a surjective Poisson submersion $M \to X$ and a Lagrangian section $X \hookrightarrow M$.
Conversely, all symplectic realizations are symplectic local groupoid, and this structure is essentially unique after restricting to a neighbourhood of the identity section \cite[III \S1]{CDW}.
The symplectic realization of a Poisson manifold $X$ can then be thought of as a canonical desingularization of $X$, where the foliation by symplectic leaves on $X$ has been resolved to a single leaf $M$.

There is an analogous theory of Poisson manifolds in the holomorphic category, which has gained considerable interest in recent years, in particular through its relationship with generalized geometry, deformation theory, and classification problems.
In particular, the question of which holomorphic Poisson manifold admits an integration by a holomorphic symplectic groupoid has been solved \cite{lgsx-holomorphic, lgsx}, and one of the consequences is an analogue of the Karasev--Weinstein theorem: every holomorphic Poisson manifold is integrable by a holomorphic symplectic local groupoid (see also \cite{BX}).
As examples of holomorphic symplectic manifolds (typically non-compact), it is natural to ask if they admit hyperk\"ahler metrics.

The archetypal example is the dual $\g^*$ of a Lie algebra with its Kirillov--Kostant--Souriau Poisson structure, which is integrated by $T^*G$, where $G$ is any Lie group integrating $\g$. 
If $\g$ is complex semisimple, $T^*G$ admits a hyperk\"ahler structure discovered by Kronheimer \cite{kronheimer-cotangent} as a moduli space of solutions to the one-dimensional reduction of the anti-self-dual Yang--Mills equations (Nahm's equations).
Similarly, if $X$ is endowed with the zero Poisson structure
$\sigma = 0$, then $T^*X$ is an integration, and we have the
Feix--Kaledin metric \cite{feix-cotangent, kaledin-2, kaledin-1} on a neighbourhood of the zero section, which is the identity section when viewed as a groupoid.
At the other extreme, if $\sigma$ is non-degenerate and $X$ is compact, then $X$ itself is hyperk\"ahler, and hence so is its groupoid $X \times X$.
From these examples, it is natural to ask:

\begin{question}
Does every holomorphic symplectic groupoid over a K\"ahler manifold have a hyperk\"ahler structure on a neighbourhood of its identity section?
\end{question}

We will show that the answer is `yes' when the base $X$ is of complex dimension two and is compact.
This can be viewed as another instance where real dimension four plays a special r\^ole.

To prove this existence result, we first formulate the problem in any dimension to a problem of finding special deformations of Poisson structures on $X$  adapted from Hitchin's unobstructedness theorem \cite{hitchin-def}.
If they exist, these deformations can be lifted to the holomorphic symplectic groupoid to construct the twistor space (Theorem \ref{groupoid-theorem}).
We then show existence in the two dimensional case (Theorem \ref{surface-theorem}).

\subsection{Statement of results}

Recall that the complex structure of a holomorphic symplectic manifold $(M, \Omega_0)$ is uniquely determined by the symplectic form $\Omega_0$, since $T^{0, 1}M = \ker(\Omega_0)$.  
Hence, we define a \defn{deformation of holomorphic symplectic structures} to be a family of complex 2-forms $\Omega_\zeta$ on $M$, depending holomorphically on $\zeta$ in a neighbourhood of $0$ in $\C$, such that each $\Omega_\zeta$ is holomorphic symplectic with respect to some (unique) complex structure.
We say that the family is \defn{real-analytic} if it is given in holomorphic coordinates of $(M, \Omega_0)$ by real-analytic functions.
Also, since a holomorphic symplectic form determines its complex structure, a hyperk\"ahler structure $(g, I, J, K)$ can be identified with its triple $(\omega_I, \omega_J, \omega_K)$ of K\"ahler forms.

Our first result is a characterization of hyperk\"ahler metrics near Lagrangian submanifolds in terms of deformations of holomorphic symplectic structures:

\begin{maintheorem}\label{lagrangian-theorem}
Let $(M, \Omega_0)$ be a holomorphic symplectic manifold and $X$ a complex Lagrangian submanifold. 
Suppose that there is a real-analytic deformation of holomorphic symplectic structures $\Omega_\zeta$ of $(M, \Omega_0)$ such that $\iota^* \Omega_\zeta = 2i\zeta\omega$ for some K\"ahler form $\omega$ on $X$, where $\iota : X \hookrightarrow M$ is the inclusion map.
Then, there is a hyperk\"ahler structure $(\omega_I, \omega_J, \omega_K)$ on a neighbourhood of $X$ in $M$ such that $\Omega_0 = \omega_J + i \omega_K$ and $\iota^* \omega_I = \omega$.
\end{maintheorem}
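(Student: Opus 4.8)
The plan is to construct the twistor space of the sought-after hyperk\"ahler structure directly from the deformation $\Omega_\zeta$ and then to invoke the twistor correspondence of Hitchin--Karlhede--Lindstr\"om--Ro\v{c}ek. Concretely, I would produce a complex manifold $Z$ of dimension $2n+1$ (where $\dim_{\C} M = 2n$) together with a holomorphic submersion $p : Z \to \CP^1$, a real structure $\tau$ covering the antipodal map on $\CP^1$, and a holomorphic fibrewise symplectic form with values in $p^*\mathcal{O}(2)$, such that the space of $\tau$-real holomorphic sections with normal bundle $\mathcal{O}(1)^{\oplus 2n}$ is the desired hyperk\"ahler manifold. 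The role of the hypotheses is then transparent: $\Omega_\zeta$ supplies the fibrewise holomorphic symplectic forms near $\zeta=0$, the fibre over $0$ being $(M,\Omega_0)$ itself, which forces $\Omega_0 = \omega_J + i\omega_K$; the linear term of $\Omega_\zeta$ along $X$ reconstructs $\omega_I|_X = \omega$; and positivity of $\omega$ yields positive-definiteness of the metric.

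To build $Z$ away from $\zeta=0$ I would use the real-analyticity hypothesis to complexify. Since the coefficients of $\Omega_\zeta$ are real-analytic in holomorphic coordinates of $(M,I_0)$, the pair $(M,I_0)$ embeds as the diagonal in a complex manifold $\mathcal{M}$ (a neighbourhood of the diagonal in $M \times \overline{M}$) carrying two transverse holomorphic foliations $\mathcal{P},\overline{\mathcal{P}}$ integrating the $(1,0)$- and $(0,1)$-directions of $I_0$, and each $\Omega_\zeta$ extends to a closed holomorphic $2$-form $\hat\Omega_\zeta$ on $\mathcal{M}$, still holomorphic in $\zeta$. Because $\Omega_0$ is of type $(2,0)$, the kernel distribution $\mathcal{F}_\zeta := \ker\hat\Omega_\zeta$ (integrable since $\hat\Omega_\zeta$ is closed) satisfies $\mathcal{F}_0 = T\overline{\mathcal{P}}$, and I would define the fibre of $Z$ over $\zeta$ to be the local leaf space $\mathcal{M}/\mathcal{F}_\zeta$, which inherits a symplectic form from $\hat\Omega_\zeta$. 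The conjugation swapping the two factors of $M\times\overline{M}$ and fixing the diagonal provides $\tau$: I would define $Z$ near $\zeta=\infty$ as the $\tau$-image of the part near $0$, so that $\tau$ is a real structure by construction and the fibre over $\infty$ is $(\overline{M},\overline{\Omega_0})$. A point $m\in\mathcal{M}$ near the diagonal then determines a section $\zeta\mapsto [m]\in\mathcal{M}/\mathcal{F}_\zeta$, and the $\tau$-fixed sections are exactly the diagonal points, i.e.\ a neighbourhood of $X$ in $M$.

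The main obstacle is twofold, and is exactly where the hypotheses must be used quantitatively. First, one must show that the distributions $\mathcal{F}_\zeta$, hence the fibres $\mathcal{M}/\mathcal{F}_\zeta$ and the sections $\zeta\mapsto[m]$, extend over the whole of $\CP^1$ (not merely over the two coordinate discs coming from $\Omega_\zeta$ and its conjugate), so that the twistor lines are genuine rational curves; this is a convergence/extension statement that I expect to hold only after restricting to a neighbourhood of the diagonal, which is the origin of the ``neighbourhood of $X$'' in the conclusion. Second, and most importantly, one must compute the normal bundle of these sections and show it is $\mathcal{O}(1)^{\oplus 2n}$. Here the hypothesis $\iota^*\Omega_\zeta = 2i\zeta\omega$ is decisive: along $X$ the family $\{\mathcal{F}_\zeta|_x\}$ of kernels over a point $x\in X$ is governed by a $2$-form that is \emph{exactly linear} in $\zeta$, which forces it to be the graph of a $\zeta$-linear family, i.e.\ a degree-one curve in the relevant Grassmannian, so that the normal bundle of the twistor line is precisely $\mathcal{O}(1)^{\oplus 2n}$, with positivity of $\omega$ fixing the correct splitting. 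By Kodaira's theorem the locus of sections with this normal bundle is open, so all sections through a sufficiently small neighbourhood of $X$ retain normal bundle $\mathcal{O}(1)^{\oplus 2n}$.

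Granting these two points, the twistor correspondence produces a hyperk\"ahler metric on the parameter space of real sections, a neighbourhood of $X$ in $M$. It then remains to match the output with the data: the complex structure $I$ corresponds to the fibre over $\zeta=0$, whose holomorphic symplectic form is $\Omega_0$, giving $\Omega_0=\omega_J+i\omega_K$; differentiating the family at $\zeta=0$ along $X$ and invoking $\iota^*\Omega_\zeta=2i\zeta\omega$ identifies $\iota^*\omega_I$ with $\omega$; and positivity of $\omega$ guarantees that the a priori pseudo-hyperk\"ahler metric is positive-definite near $X$. I expect the normal bundle computation and the positivity --- both controlled by the K\"ahler condition on $\omega$ --- to be the technical heart of the argument, with the complexification and the extension of the leaves over $\CP^1$ being the main analytic subtlety made possible by real-analyticity.
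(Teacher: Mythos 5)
Your construction over the disc is sound---the leaf spaces $\mathcal{M}/\mathcal{F}_\zeta$ are just another description of the complex structures $I_\zeta$ determined by $T^{0,1}_\zeta = \ker \Omega_\zeta$, and this half of the twistor space agrees with the paper's $Z = M \times D_r$---and invoking the twistor correspondence at the end is the right frame. The genuine gap is the real structure. You take $\tau$ to be the fixed conjugation $c$ swapping the factors of $M \times \overline{M}$, which restricts to the identity on the diagonal. For $c$ to induce an identification of the fibre over $\zeta$ with the conjugate fibre over $\rho_\e(\zeta)$, it must map $\mathcal{F}_\zeta$ to $\mathcal{F}_{\rho_\e(\zeta)}$; along the diagonal this reads $\overline{\ker \Omega_\zeta} = \ker \Omega_{\rho_\e(\zeta)}$, i.e.\ $I_{\rho_\e(\zeta)} = -I_\zeta$ \emph{globally on $M$}. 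That is exactly the reality condition satisfied by the quadratic twistor family \eqref{twistor-family}, and it does not follow from the hypothesis: $\iota^*\Omega_\zeta = 2i\zeta\omega$ constrains only the pullback to $X$, and does not even determine $\ker\Omega_\zeta$ at points of $X$. The prototypical application, the Feix--Kaledin family $\Omega_\zeta = \Omega_0 + 2i\zeta\pi^*\omega$ on $T^*X$, is linear in $\zeta$ and violates it (its kernel at a point of the zero section contains $\d_{\bar w}$-directions for all $\zeta$, never their conjugates). So your gluing map is undefined, and with it the claim that every diagonal point gives a real section: if the gluing fixed all of $M$ pointwise, the family would be forced to satisfy the global reality condition, which it need not. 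Real twistor lines through points of $M \setminus X$ do exist, but only by the deformation theory underlying the twistor correspondence; they are not the constant sections $\zeta \mapsto [m]$.

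The missing idea is that the gluing must be a $\zeta$-dependent family of fibrewise diffeomorphisms $\tau_\zeta$, required to equal the identity only on $X$, and that such a family can be manufactured by analytic continuation \emph{from $X$}: since $\iota^*\Omega_\zeta = 2i\zeta\omega$ is non-degenerate for $\zeta \neq 0$, the submanifold $X$ is totally real in $(M, I_\zeta)$ (Lemma \ref{0bxcijgm}), so $X \times S^1_\e$ is a totally real, real-analytic submanifold of $Z$ whose real dimension equals $\dim_\C Z$; the involution $(x,\zeta) \mapsto (x, \rho_\e(\zeta))$ of $X \times S^1_\e$ therefore extends uniquely to an anti-holomorphic involution $\tau$ of a neighbourhood $U$, and the compatibility $\tau^*\bar\Omega = \e^2 \Omega/\pi^2$ required by Proposition \ref{local-twistor-space} is again checked by continuation from $X$, where it reduces to the reality of $2i\zeta\omega$. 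This is also the true origin of the ``neighbourhood of $X$'' in the conclusion: the continuation exists only near $X \times S^1_\e$, not near all of $M$. Your use of real-analyticity---extending the forms $\Omega_\zeta$ to the complexification $\mathcal{M}$---applies it to the wrong object; what must be continued is the identity map of $X$ into a nontrivial involution of a neighbourhood of $X$ in $M$. Once $\tau$ exists, the constant sections through points of $X$ (and only those) are manifestly real, their normal bundles are computed from holomorphic sections glued by $d\tau$ (the paper uses $I_0 v_i \pm I_\zeta v_i$ together with the degree count of Lemma \ref{zgd97fbr} and the criterion of Lemma \ref{yaed55v1}), and Corollary \ref{family-of-twistor-lines} then produces the metric near $X$.
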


The hyperk\"ahler structure is obtained by constructing the twistor space by gluing two halves together, as in \cite{feix-cotangent}.
We review this method in \S\ref{twistor} and prove Theorem \ref{lagrangian-theorem} in \S\ref{lagrangian-theorem-proof}.

\begin{remark}
Conversely, every hyperk\"ahler structure $(\omega_I, \omega_J, \omega_K)$ can be obtained in this way using the twistor family (see \S\ref{twistor}) of holomorphic symplectic forms
\begin{equation}\label{twistor-family}
\Omega_\zeta \coloneqq (\omega_J + i \omega_K) + 2i\zeta \omega_I + \zeta^2(\omega_J - i \omega_K).
\end{equation}
\end{remark}

\begin{remark}
More generally, if $\omega$ is a pseudo-K\"ahler form on $X$ of signature $(p, q)$, we get a pseudo-hyperk\"ahler structure of signature $(2p, 2q)$ (Lemma \ref{signature-lemma}).
\end{remark}

\begin{example}\label{feix-kaledin-example}
If $X$ is a complex manifold with a real-analytic K\"ahler
form $\omega$, we recover the Feix--Kaledin hyperk\"ahler
metric \cite{feix-cotangent, kaledin-2, kaledin-1} on a
neighbourhood of the zero section of $M = T^*X$ by taking $\Omega_\zeta \coloneqq \Omega_0 + 2i\zeta \pi^*\omega$, where $\pi : T^*X \to X$ is the bundle map and $\Omega_0$ is the canonical holomorphic symplectic form (see \S\ref{trivial-poisson-structure}).
This is known as the \emph{twisted cotangent bundle}, and appears, for example, in Donaldson's reformulation of K\"ahler geometry \cite[\S2]{donaldson-monge}, as also explained in \cite[\S2]{bgz}.
The complex structure induced by $\Omega_\zeta$ is not the original one, but is biholomorphic to the affine bundle modeled on $T^*X$ defined by the K\"ahler class $[\omega] \in H^1(X, T^*X)$ (see \cite[Appendix]{abasheva} and \cite{GW}).
\end{example}

\begin{remark}[Deformations of hyperk\"ahler structures]
Theorem \ref{lagrangian-theorem} implies that any complex Lagrangian submanifold in a hyperk\"ahler manifold defines a deformation of hyperk\"ahler structures. 
Indeed, \eqref{twistor-family} satisfies the condition of the theorem, but so does $\Omega_{t\zeta}$ for any $t > 0$. 
Hence, there is a family of K\"ahler forms $\omega_I(t)$ on neighbourhoods of $X$, such that $(\omega_I(t), \omega_J, \omega_K)$ is hyperk\"ahler and $\iota^*\omega_I(t) = t \iota^*\omega_I$ for all $t > 0$.  
For example, by applying this idea to Kronheimer's hyperk\"ahler structure on the cotangent bundle of a complex reductive group \cite{kronheimer-cotangent}, the zero section induces the one-parameter family of hyperk\"ahler structures obtained by varying the length of the interval on which Nahm's equations are defined.
\end{remark}

We now turn our attention to holomorphic Poisson geometry.
Recall that a \defn{holomorphic Poisson manifold} is a complex manifold $X$ together with a holomorphic bivector field $\sigma$ such that the bilinear operator $\{f, g\} \coloneqq \sigma(df, dg)$ acting on holomorphic functions $f, g$ satisfies the Jacobi identity.
A \defn{holomorphic symplectic groupoid} (see e.g.\ \cite[\S3]{lgsx}) is a holomorphic Lie groupoid $\G \rightrightarrows X$ together with a holomorphic symplectic form $\Omega$ such that the graph of the multiplication map is Lagrangian in $\G \times \G \times \G^-$, where $\G^-$ is $\G$ with the opposite symplectic structure $-\Omega$.
It follows that $X$ is endowed with a canonical holomorphic Poisson structure such that the source map $s : \G \to X$ is a holomorphic Poisson submersion.
The identity section is always a complex Lagrangian submanifold \cite[II Proposition 1.1]{CDW}, and we wish to use it with Theorem \ref{lagrangian-theorem}.
Since we are only considering a neighbourhood of the identity section, we can in fact generalize the discussion to symplectic realizations in the following sense.

\begin{definition}[{Weinstein \cite[\S7]{weinstein-local} \cite[III \S1]{CDW}}]\label{symplectic-realization}
A \defn{symplectic realization} of a holomorphic Poisson manifold $X$ is a holomorphic symplectic manifold $M$ containing $X$ as a complex Lagrangian submanifold together with a surjective holomorphic Poisson submersion $s : M \to X$ such that $s \circ \iota = \mathrm{id}_X$, where $\iota : X \hookrightarrow M$ is the inclusion map.
\end{definition}

\begin{remark}
In \cite[III Definition 1.1]{CDW}, this is called a \emph{strict} symplectic realization.
\end{remark}

For example, a neighbourhood of the identity section of a holomorphic symplectic groupoid together with its source map is a symplectic realization.  
Conversely, any symplectic realization has the structure of a symplectic local groupoid, possibly after restricting to a neighbourhood of $X$ in $M$ \cite[III Theorem 1.2]{CDW}. 
Also, every holomorphic Poisson manifold has a symplectic realization in this sense \cite{BX}.

The next result shows that a hyperk\"ahler structure on a symplectic realization can be constructed if we can solve certain differential equations on the Poisson manifold.

\begin{maintheorem}\label{groupoid-theorem}
Let $(X, \sigma)$ be a (not-necessarily compact) holomorphic Poisson manifold.
Suppose that there is a real-analytic family $\omega(\zeta) = \sum_{n = 1}^\infty \omega_n \zeta^n$ of $(1, 1)$-forms on $X$ solving the equation
\begin{equation}\label{srg86qgv}
d \omega + \tfrac{1}{2}\d i_\sigma(\omega
\wedge \omega) = 0,
\end{equation}
for $\zeta$ in a neighbourhood $U$ of $0$ in $\C$, such that 
\begin{enumerate}
\item[(a)] $\omega_1$ is a K\"ahler form,
\item[(b)] $1 - \phi_\zeta \bar{\phi}_\zeta$ is invertible for all $\zeta \in U$, where $\phi_\zeta \coloneqq -\sigma \circ \omega(\zeta) : T_\C X \to T_\C X$,
\item[(c)] $i_\sigma(\omega_{2n} \wedge \omega_1) = 0$ and $\omega_{2n + 1} = 0$ for all $n \ge 1$.
\end{enumerate}
Then, any symplectic realization $(M, \Omega_0)$ of $(X, \sigma)$ has a hyperk\"ahler structure $(\omega_I, \omega_J, \omega_K)$ on a neighbourhood of $X$ such that $\Omega_0 = \omega_J + i \omega_K$ and $\iota^* \omega_I = \omega_1$.
\end{maintheorem}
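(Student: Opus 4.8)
The plan is to deduce Theorem \ref{groupoid-theorem} from Theorem \ref{lagrangian-theorem} by lifting the base family $\omega(\zeta)$ to a real-analytic deformation of holomorphic symplectic structures $\Omega_\zeta$ on the symplectic realization $M$ whose restriction to $X$ is $\iota^*\Omega_\zeta = 2i\zeta\omega_1$. Once this is done, hypothesis (a) guarantees that $\omega_1$ is a K\"ahler form, so Theorem \ref{lagrangian-theorem} applies with $\omega = \omega_1$ and produces a hyperk\"ahler structure $(\omega_I,\omega_J,\omega_K)$ on a neighbourhood of $X$ with $\Omega_0 = \omega_J + i\omega_K$ and $\iota^*\omega_I = \omega_1$, which is exactly the asserted conclusion. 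Thus the entire content of the proof is the construction of the lift $\Omega_\zeta$ on $M$ and the verification that each member of the family is genuinely holomorphic symplectic for $\zeta$ near $0$.

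For the lift, the natural ansatz is
\[
\Omega_\zeta = \Omega_0 + 2i\,s^*\omega(\zeta) + C_\zeta,
\]
where $C_\zeta$ is a correction vanishing to second order in $\zeta$, built from the Poisson bivector $\sigma$ and the $\omega_n$ and transported to $M$ using the identification $TM \cong T^*M$ furnished by $\Omega_0$. The guiding principle is that $s$ is a Poisson submersion, so the holomorphic bivector $\Omega_0^{-1}$ on $M$ is $s$-related to $\sigma$; this is precisely what allows the base expression $\d i_\sigma(\omega\wedge\omega)$ to be realised, after pulling back along $s$, as an exact correction on $M$. I would first check the linear order: since the $\zeta^1$-component of \eqref{srg86qgv} is $d\omega_1 = 0$, no correction is needed there and $\Omega_0 + 2i\zeta\,s^*\omega_1$ is already closed, recovering the twisted cotangent bundle of Example \ref{feix-kaledin-example} when $\sigma = 0$. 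At order $\zeta^2$ and beyond, \eqref{srg86qgv} gives $d\omega_2 = -\tfrac12\d i_\sigma(\omega_1\wedge\omega_1)$ and its higher analogues, which are nonzero in general, and $C_\zeta$ must be designed so that $dC_\zeta$ cancels $2i\,s^*d\omega(\zeta)$, making $\Omega_\zeta$ closed. The central lemma is therefore that, order by order in $\zeta$, the closedness of $\Omega_\zeta$ on $M$ is equivalent to equation \eqref{srg86qgv} on $X$, with no dependence on the particular realization.

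It then remains to show that each closed form $\Omega_\zeta$ is nondegenerate and of pure type $(2,0)$ for an integrable complex structure, and that its restriction to $X$ is $2i\zeta\omega_1$. On the base, the endomorphism $\phi_\zeta = -\sigma\circ\omega(\zeta)$ restricts to a map $T^{0,1}X \to T^{1,0}X$, since $\sigma$ is of type $(2,0)$ and $\omega(\zeta)$ of type $(1,1)$; it is thus a Beltrami differential, and hypothesis (b), the invertibility of $1 - \phi_\zeta\bar\phi_\zeta$, is exactly the transversality condition ensuring that the associated almost complex structure is well defined, i.e.\ that $\ker\Omega_\zeta \oplus \overline{\ker\Omega_\zeta} = T_\C M$, with integrability again following from \eqref{srg86qgv}. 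Finally, hypothesis (c)---the vanishing of $\omega_{2n+1}$ and of $i_\sigma(\omega_{2n}\wedge\omega_1)$---is what forces the contributions of $C_\zeta$ and of $s^*\omega(\zeta)$ beyond first order to cancel upon restriction to $X$, leaving the purely linear restriction $\iota^*\Omega_\zeta = 2i\zeta\omega_1$ required for the gluing in Theorem \ref{lagrangian-theorem}, while keeping the forms nondegenerate along the family.

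I expect the main obstacle to be the central lemma of the second paragraph: identifying the correction $C_\zeta$ explicitly and proving that the integrability of the lifted structure $\Omega_\zeta$ on the total space $M$ is controlled precisely by the base equation \eqref{srg86qgv}. Concretely, this amounts to showing that the obstruction to extending $2i\zeta\omega_1$ off $X$ to a holomorphic symplectic deformation lives entirely on $X$ and is governed by $\sigma$ and $\omega(\zeta)$ alone; the Poisson-submersion property of $s$ and the Lagrangian condition on $X$ are the tools that convert the base identity \eqref{srg86qgv} into closedness and holomorphic symplecticity on $M$. A secondary point requiring care is the real-analyticity and convergence of the family in $\zeta$ on a fixed neighbourhood of $X$, which I expect to follow from the real-analyticity of $\omega(\zeta)$ together with the real-analytic dependence of the correction terms on the $\omega_n$.
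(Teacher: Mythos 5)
Your overall strategy (lift $\omega(\zeta)$ to a real-analytic deformation $\Omega_\zeta$ on $M$ with $\iota^*\Omega_\zeta = 2i\zeta\omega_1$, then invoke Theorem \ref{lagrangian-theorem}) is exactly the paper's, but the lift itself is the entire content of the theorem, and your proposal leaves it as an unproven ``central lemma'' with an ansatz that cannot work as stated. Your ansatz $\Omega_\zeta = \Omega_0 + 2i\,s^*\omega(\zeta) + C_\zeta$ uses only the source map $s$, with $C_\zeta$ built from $\sigma$-contractions via $\Omega_0$. The missing idea is the \emph{target map}: since a symplectic realization is a symplectic local groupoid near $X$ (CDW, III Theorem 1.2), there is a second, anti-Poisson submersion $t : M \to X$ with $t\circ\iota = \mathrm{id}_X$ and $\ker ds$, $\ker dt$ symplectically orthogonal. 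The paper sets $\beta(\zeta) = \omega(\zeta) + \tfrac12 i_\sigma(\omega(\zeta)\wedge\omega(\zeta))$ (a \emph{closed} form, by equation \eqref{srg86qgv} and the Jacobi identity for $\sigma$) and defines
\[
\Omega_\zeta \coloneqq \Omega_0 + s^*\beta(i\zeta) - t^*\beta(-i\zeta).
\]
Closedness is then automatic; the half-dimensional kernel and the transversality $\ker\Omega_\zeta \cap \overline{\ker\Omega_\zeta} = 0$ are proved using the identities $s_*\tau t^* = t_*\tau s^* = 0$, $s_*\tau s^* = \sigma$, $t_*\tau t^* = -\sigma$ (with $\tau = \Omega_0^{-1}$), together with hypothesis (b) via $s_*(1-\psi\bar\psi) = (1-\phi_1\bar\phi_1)s_*$. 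None of this structure is available with the source map alone.

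The reason the single-fibration ansatz fails is concrete: closedness forces the correction $\tfrac12 i_\sigma(\omega\wedge\omega)$ (since $\omega(\zeta)$ itself is not closed when $\sigma \neq 0$), but then the restriction to $X$ of $\Omega_0 + s^*\beta$ is $\beta(\zeta) = \zeta\omega_1 + \zeta^2\bigl(\omega_2 + \tfrac12 i_\sigma(\omega_1\wedge\omega_1)\bigr) + \cdots$, and hypothesis (c) does \emph{not} kill the even coefficients $\omega_2, \omega_4, \ldots$ nor $i_\sigma(\omega_1\wedge\omega_1)$ --- it only gives $\omega_{2n+1} = 0$ and $i_\sigma(\omega_{2n}\wedge\omega_1) = 0$. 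The cancellation of all even-order terms in $\iota^*\Omega_\zeta$ comes precisely from the anti-diagonal $s$/$t$ antisymmetrization: since $\iota$ is a bisection of both $s$ and $t$, one gets $\iota^*\Omega_\zeta = \beta(i\zeta) - \beta(-i\zeta) = 2i(\zeta\beta_1 - \zeta^3\beta_3 + \cdots)$, in which the even terms drop out identically and (c) is exactly what kills $\beta_3, \beta_5, \ldots$, leaving $2i\zeta\omega_1$. Your proposal has no mechanism to achieve this cancellation, so the gap is not a technicality to be filled order by order but the absence of the construction itself.
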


Equation \eqref{srg86qgv} comes from Hitchin's method \cite{hitchin-def} for constructing deformations of holomorphic Poisson structures, where $\phi_\zeta$ is shown to solve the Maurer--Cartan equation.
This will be explained in details in \S\ref{groupoid-theorem-proof}.
The conditions on the coefficients $\omega_n$ ensure that this deformation can be lifted to a deformation of holomorphic symplectic structures $\Omega_\zeta$ on $M$ in such a way that $\iota^*\Omega_\zeta = 2i \zeta \omega_1$, as required for Theorem \ref{lagrangian-theorem}.
More precisely, we define
\[
\Omega_\zeta \coloneqq \Omega_0 + s^* \beta(i\zeta) - t^* \beta(-i\zeta),
\]
where $\beta(\zeta) = \omega(\zeta) + \tfrac{1}{2} i_\sigma(\omega(\zeta) \wedge \omega(\zeta))$ (which can be viewed as a $B$-field) and $t$ is a target map.
If $X$ is compact K\"ahler, there always exists $\omega(\zeta)$ satisfying (a) and (b), but (c) imposes additional constraints which we can only obtain in special cases.

\begin{example}
We recover the Feix--Kaledin hyperk\"ahler structure even more easily from Theorem \ref{groupoid-theorem} with $\sigma = 0$ and $\omega(\zeta) \coloneqq \zeta \omega_1$ for any real-analytic K\"ahler form $\omega_1$.
We will show that our hyperk\"ahler structure coincides with the Feix--Kaledin one in \S\ref{trivial-poisson-structure}.
\end{example}

\begin{example}
Let $X$ be a hyperk\"ahler manifold with K\"ahler forms $(\omega_I, \omega_J, \omega_K)$ and let $\sigma$ be the inverse of the holomorphic symplectic structure $\Omega = \omega_J + i \omega_K$. 
Then, $\omega(\zeta) = \zeta \omega_I$ satisfies the conditions of Theorem \ref{groupoid-theorem}. 
Indeed, $\tfrac{1}{2}i_{\sigma}(\omega_I \wedge \omega_I) = \omega_I \sigma \omega_I = \tfrac{1}{4} \omega_I (\omega_J^{-1} - i \omega_K^{-1}) \omega_I = \tfrac{1}{4}(-\omega_J + i \omega_K) = -\frac{1}{4} \bar{\Omega}$, using that $\omega_J^{-1}\omega_I = K$ so $\omega_J^{-1}\omega_I = -\omega_I^{-1}\omega_J$ and similarly $\omega_K^{-1} \omega_I = - \omega_I^{-1} \omega_K$ (cf.\ \cite[Example 5.7]{gualtieri-ham}).
Hence, both terms of $d\omega + \tfrac{1}{2} \d i_\sigma(\omega \wedge \omega)$ are zero.
Now, the pair groupoid $X \times X$ is a symplectic realization of $X$, with symplectic form $(\Omega, -\Omega)$, source map $s(x, y) = x$, and identity section $\iota(x) = (x, x)$.
Then, Theorem \ref{groupoid-theorem} recovers the hyperk\"ahler structure on $X \times X$ with complex structures $(I, I)$, $(J, -J)$, $(K, -K)$.
\end{example}

Finally, we show that the equations in Theorem \ref{groupoid-theorem} can be solved in the two-dimensional case.

\begin{maintheorem}\label{surface-theorem}
Let $(X, \sigma)$ be a compact holomorphic Poisson surface together with a real-analytic K\"ahler form $\omega$ and let $s : (M, \Omega) \to (X, \sigma)$ be a symplectic realization. 
Then, there is a hyperk\"ahler structure $(\omega_I, \omega_J, \omega_K)$ on a neighbourhood of $X$ in $M$ such that $\Omega = \omega_J + i \omega_K$ and $\iota^*\omega_I = \omega$.
\end{maintheorem}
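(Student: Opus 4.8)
The plan is to deduce Theorem \ref{surface-theorem} from Theorem \ref{groupoid-theorem}: since $X$ is a compact K\"ahler surface, it suffices to produce a real-analytic family $\omega(\zeta) = \sum_{n \ge 1}\omega_n\zeta^n$ of $(1,1)$-forms, defined for $\zeta$ near $0$, solving \eqref{srg86qgv} and satisfying (a)--(c), with $\omega_1 = \omega$. The starting observation, special to complex dimension two, is that $i_\sigma(\omega(\zeta)\wedge\omega(\zeta))$ is a $(0,2)$-form, hence $\bar\partial$-closed for type reasons, so that \eqref{srg86qgv} is equivalent to the single condition that the $B$-field $\beta(\zeta) = \omega(\zeta) + \tfrac12 i_\sigma(\omega(\zeta)\wedge\omega(\zeta))$ be closed. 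Splitting by type, this reads $\partial\omega(\zeta) = 0$ together with $\bar\partial\omega(\zeta) + \tfrac12\partial i_\sigma(\omega(\zeta)\wedge\omega(\zeta)) = 0$.

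I would solve these equations recursively in the order of $\zeta$. At order $\zeta^k$ the right-hand side is built from the $\omega_m$ with $m<k$ and takes the form $\tfrac12\partial\Psi_k$, where $\Psi_k = i_\sigma\big(\sum_{m+l=k}\omega_m\wedge\omega_l\big)$ is a $(0,2)$-form. Because $\Psi_k$ is $\bar\partial$-closed, the form $\partial\Psi_k$ is $d$-closed and $\partial$-exact; the compact K\"ahler hypothesis then lets me invoke the $\partial\bar\partial$-lemma to write $\partial\Psi_k = \partial\bar\partial\eta_k$ and set $\omega_k = \tfrac12\partial\eta_k$, which is a $(1,1)$-form satisfying both $\partial\omega_k = 0$ and $\bar\partial\omega_k = -\tfrac12\partial\Psi_k$. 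Thus the recursion is formally unobstructed; this is the incarnation of Hitchin's unobstructedness in the present setting.

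The crux is to meet condition (c), and this is where $\dim_\C X = 2$ is essential. On a surface the $(0,2)$-form $i_\sigma(\alpha\wedge\omega_1)$ is a multiple of the $\omega_1$-trace of $\alpha$ times the Poisson bivector, so it vanishes as soon as the $(1,1)$-form $\alpha$ is primitive; thus it suffices to keep each $\omega_{2n}$ primitive, i.e.\ $\operatorname{tr}_{\omega_1}\omega_{2n}=0$. I would run the recursion as an induction that simultaneously forces the odd coefficients to vanish and keeps the even coefficients primitive. Indeed, if $\omega_{2n}$ is primitive then $\Psi_{2n+1}$ reduces to $2\, i_\sigma(\omega_1\wedge\omega_{2n}) = 0$, so the order-$(2n+1)$ equation becomes $d\omega_{2n+1}=0$ and we simply take $\omega_{2n+1}=0$. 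At even order, after producing some solution $\omega_{2n+2}$ as above, I would correct it by a closed form without changing $\partial\omega_{2n+2}$ or $\bar\partial\omega_{2n+2}$: adding $c\,\omega_1$ (closed, since $\omega_1$ is K\"ahler) adjusts the mean of $\operatorname{tr}_{\omega_1}\omega_{2n+2}$, and adding $i\partial\bar\partial\psi$ adjusts it by $\Delta_{\omega_1}\psi$; solving $\Delta_{\omega_1}\psi = -(\operatorname{tr}_{\omega_1}\omega_{2n+2} - c)$ for the unique mean-zero value of $c$ makes $\operatorname{tr}_{\omega_1}\omega_{2n+2}\equiv 0$, i.e.\ $\omega_{2n+2}$ primitive. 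This closes the induction and yields (c). Condition (a) holds by the choice $\omega_1 = \omega$, and condition (b) is automatic after shrinking $U$: since $\omega(0)=0$ we have $\phi_0 = 0$, so $\phi_\zeta\bar\phi_\zeta = O(|\zeta|^2)$ uniformly on the compact $X$ and $1 - \phi_\zeta\bar\phi_\zeta$ is invertible for small $|\zeta|$.

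The main remaining obstacle is analytic: promoting the formal solution to a convergent, real-analytic family. Each step inverts the Laplacian (via the Green operator implicit in the $\partial\bar\partial$-lemma and in the equation $\Delta_{\omega_1}\psi = \cdots$) applied to the quadratic nonlinearity $i_\sigma(\omega\wedge\omega)$ in real-analytic data; analytic hypoellipticity of these elliptic operators makes each $\omega_n$ real-analytic on $X$. To control the series I would set the recursion up in a scale of Banach spaces of real-analytic sections and establish geometric estimates on the $\omega_n$ by a majorant argument, as in Feix's proof \cite{feix-cotangent}, obtaining a positive radius of convergence in $\zeta$ and joint real-analyticity on $X \times U$. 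With $\omega(\zeta)$ in hand, Theorem \ref{groupoid-theorem} applied to the given symplectic realization $s\colon(M,\Omega)\to(X,\sigma)$ produces the desired hyperk\"ahler structure with $\Omega = \omega_J + i\omega_K$ and $\iota^*\omega_I = \omega$.
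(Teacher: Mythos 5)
Your formal construction is correct and follows the same skeleton as the paper's proof: reduce to Theorem \ref{groupoid-theorem}, and exploit the dimension-two coincidence that primitivity of the even coefficients forces $i_\sigma(\omega_{2n}\wedge\omega_1) = 0$, after which the odd coefficients vanish inductively. Where you differ is the solution operator for the recursion: you take an abstract $\d\db$-lemma solution and then restore primitivity by hand, adding $c\,\omega_1 + i\d\db\psi$ and solving a Poisson equation for the trace. The paper instead takes the canonical Green-operator solution $\omega_n = \d\db^* G \gamma_n$ with $\gamma_n = \sum_{i+j=n}\tfrac12 i_\sigma(\omega_i\wedge\omega_j)$; the K\"ahler identity $[\db^*, L] = i\d$ together with $LG\gamma_n \in \Omega_X^{1,3} = 0$ then gives $\omega_n \wedge \omega_1 = 0$ automatically, so no correction step is needed. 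As a formal, order-by-order solution your variant is valid, and your treatment of conditions (a) and (b) is fine.

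The gap is the analytic step, which you rightly identify as the main obstacle but only gesture at, and your formal choices make it genuinely harder than you suggest. The paper does \emph{not} prove joint real-analyticity by majorant estimates in analytic Banach scales. It first proves convergence merely in $C^{k,\alpha}$ (Green-operator bounds give $|\omega_n|_{k,\alpha} \le c \sum_{i+j=n} |\omega_i|_{k,\alpha}\,|\omega_j|_{k,\alpha}$, hence a Catalan-number bound and a positive radius of convergence), and then obtains real-analyticity of the \emph{limit} from elliptic regularity: because the paper's solution also satisfies $\db^*\omega_n = 0$ (again automatic from the formula $\d\db^* G\gamma_n$), the sum $\omega(\zeta)$ satisfies the elliptic equation $\Box\,\omega + \tfrac12 \db^*\d i_\sigma(\omega\wedge\omega) = 0$ with $\Box = \Delta + \tfrac{\d^2}{\d\zeta\d\overline{\zeta}}$, and Morrey's theorem applies. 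Your solution, after the trace correction, has no reason to be $\db^*$-closed, so the limit satisfies no evident elliptic equation and this route is closed to you; you are then committed to the heavy machinery you allude to (majorant estimates in scales of spaces of real-analytic sections, including analytic bounds for Green operators), none of which is carried out. The economical repair is to take the Green-operator solution of the $\d\db$-lemma from the start: it is exactly the paper's formula, it makes your correction step unnecessary, and it hands you both $\omega_n\wedge\omega_1 = 0$ and $\db^*\omega_n = 0$, after which H\"older convergence plus Morrey's theorem complete the proof.
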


\begin{remark}
Real-analyticity of the K\"ahler form is a necessary condition since the twistor correspondence \cite[\S3(F)]{HKLR} implies that all hyperk\"ahler structures are real-analytic.  
On the other hand, every K\"ahler class has a real-analytic representative \cite[Proposition 1.7]{GW}.
\end{remark}

\begin{example}
For any two-dimensional compact holomorphic symplectic manifold $X$ with a real-analytic K\"ahler form $\omega$, Theorem \ref{surface-theorem} shows that there is a hyperk\"aler structure $(\omega_I, \omega_J, \omega_K)$ on a neighbourhood of the diagonal in $X \times X$ such $\iota^*\omega_I = \omega$.
There is, of course, already a hyperk\"ahler structure on $X$ by Yau's theorem \cite{yau} and hence on $X \times X$, but the restriction of the first K\"ahler form to the diagonal is not equal to $\omega$ in general, so the two hyperk\"ahler structures are different.
\end{example}

\subsection{Future work}

The next natural step is, of course, the study of examples. 
There is a complete classification of compact holomorphic Poisson surfaces up to birational equivalences derived from the Enriques--Kodaira classification, so there is a wealth of explicit examples to study (see e.g.\ Pym's overview \cite{pym-constructions}).
In particular, there is a large family of Poisson structures  on $\CP^2$ obtained by choosing a cubic curve to specify the vanishing locus of the Poisson bivector field.
Using the Fubini--Study metric, our result then implies the existence of a canonical hyperk\"ahler manifold associated to any elliptic curve, and which deforms the complete hyperk\"ahler metric on $T^*\CP^2$ found by Calabi \cite{calabi} (which can be viewed as coming from the zero Poisson structure).
An interesting question for future work is whether these new metrics extend to the whole groupoid and are complete.
Since hyperk\"ahler structures are real-analytic, the extension is unique.

\subsection{Acknowledgments} 
I thank Marco Gualtieri for helpful discussions, and the anonymous referee for useful comments.
This work was supported by an NSERC Postdoctoral Fellowship and additional support from the FRQNT.

\section{Twistor spaces by gluing}
\label{twistor}

We now recall and clarify a general construction of hyperk\"ahler metrics by gluing two halves of a twistor space.
This is the idea used by Feix \cite{feix-cotangent} for the cotangent bundle of a K\"ahler manifold.
It also appears in many works, such as Deligne's construction of the twistor space of the Hitchin moduli space, as explained by Simpson \cite[\S4]{simpson-hodge}, and Atiyah--Hitchin's description of the hyperk\"ahler metric of the monopole moduli space \cite[Ch.\ 5]{atiyah-hitchin}, for instance.

\subsection{The hyperk\"ahler twistor correspondence}

In this subsection, we will first recall Hitchin--Karlhede--Lindstr\"{o}m--Ro\v{c}ek's adaptation of Penrose's non-linear twistor theory \cite{penrose} for the construction of hyperk\"ahler metrics \cite[\S3(F)]{HKLR}.
The idea is encode a hyperk\"ahler structure into purely holomorphic data on an auxiliary space.

We work with pseudo-hyperk\"ahler manifolds first; the positive definiteness of the metric is a separate condition. 
Recall that a \defn{pseudo-hyperk\"ahler manifold} is a smooth manifold $M$ with a pseudo-Riemannian metric $g$ and complex structures $I, J, K$ which are pseudo-K\"ahler with respect to $g$ and satisfy $IJK = -1$. 
The corresponding pseudo-K\"ahler forms will be denoted $\omega_I, \omega_J, \omega_K$ and, since they determine $I, J, K$ and $g$, a pseudo-hyperk\"ahler structure will be identified with its triple $(\omega_I, \omega_J, \omega_K)$ of symplectic forms.

Let $S^2 \s \R^3$ be the unit two-sphere endowed with its standard complex structure $\mathsf{I}_xy = x \times y$ for $x \in S^2$ and $y \in T_xS^2 \s \R^3$. 
For $x \in S^2$, let $I_x \coloneqq x_1 I + x_2 J + x_3 K$ and $\omega_x \coloneqq x_1 \omega_I + x_2 \omega_J + x_3 \omega_K$.
Then, $(g, I_x, \omega_x)$ is a pseudo-K\"ahler structure for all $x \in S^2$.
Moreover, the space $Z \coloneqq M \times S^2$ becomes a complex manifold by endowing it with the almost complex structure $(I_x, \mathsf{I})$ at $(p, x) \in M \times S^2$, and the projection $\pi : Z \to S^2$ is a surjective holomorphic submersion.

Let $x \in S^2$ and choose $y, z \in \R^3$ so that $(x, y, z)$ is a positively oriented orthonormal basis.  
Then, the complex two-form $\omega_y + i\omega_z$ is a holomorphic symplectic form with respect to $I_x$. 
Moreover, by viewing $y$ as a tangent vector to $x$, the tensor product $(\omega_y + i \omega_z) \otimes y \in \Exterior^2(\ker d\pi)^* \otimes T_xS^2$ is independent of the choice of $(y, z)$, and defines a global \emph{holomorphic} section $\Omega$ of $\Exterior^2(\ker d\pi)^* \otimes \pi^*TS^2$.
Also, the map $\tau : Z \to Z : (p, x) \mto (p, -x)$ is a real structure (i.e.\ an anti-holomorphic involution) covering the antipodal map on $S^2$ and such that $\tau^*\bar{\Omega} = -\Omega$.
Finally, each point $p$ of $M$ induces a holomorphic section $x \mto (p, x)$ of $\pi$, which is $\tau$ invariant and has normal bundle isomorphic to $\O(1)^{\oplus 2n}$.

Now, identify $S^2$ with $\CP^1$ via the biholomorphism
\[
\CP^1 \too S^2, \quad \zeta \mtoo 
\left(
\frac{1 - |\zeta|^2}{1 + |\zeta|^2},
i \frac{\bar{\zeta} - \zeta}{1 + |\zeta|^2},
- \frac{\zeta + \bar{\zeta}}{1 + |\zeta|^2}
\right).
\]
Then, the holomorphic section $\Omega$ defined above is of the form
\[
\Omega = \left((\omega_J + i \omega_K) + 2i \zeta \omega_I + \zeta^2 (\omega_J - i \omega_K) \right) \otimes \tfrac{i}{2} \tfrac{\d}{\d \zeta}.
\]
Upon identifying $T\CP^1$ with $\O(2)$, we arrive at:

\begin{definition}\label{twistor-space}
A \defn{hyperk\"ahler twistor space} is a complex manifold $Z$ of dimension $2n + 1$, together with
\begin{enumerate}
\item[(a)] a surjective holomorphic submersion $\pi : Z \to
\CP^1$,
\item[(b)] a global holomorphic section $\Omega$ of $\Exterior^2(\ker d\pi)^* \otimes \pi^*\O(2)$ which restricts to a holomorphic symplectic form on each fibre $Z_\zeta \coloneqq \pi^{-1}(\zeta)$, and
\item[(c)] a real structure $\tau : Z \to Z$ covering the antipodal map $\rho : \CP^1 \to \CP^1$ and such that $\tau^*\bar{\Omega} = -\Omega$.
\end{enumerate}
A \defn{twistor line} is a holomorphic section of $\pi$ whose normal bundle is isomorphic to $\O(1)^{\oplus 2n}$. 
A twistor line $s$ is \defn{real} if $\tau \circ s = s \circ \rho$.
\end{definition}

The twistor method says that, conversely, this data is enough to construct a hyperk\"ahler manifold.

\begin{theorem}[{Hitchin--Karlhede--Lindstr\"{o}m--Ro\v{c}ek \cite[\S3(F)]{HKLR}}]\label{7d5uk40i}
Let $(Z, \pi, \Omega, \tau)$ be a hyperk\"ahler twistor space. 
Then, the set $\M$ of real twistor lines is a smooth manifold endowed with a pseudo-hyperk\"ahler structure $(\omega_I, \omega_J, \omega_K)$. 
Moreover, for each $\zeta \in \CP^1 \setminus \{\infty\}$, the evaluation map $\mathrm{ev}_\zeta : \M \to Z_\zeta$ is a local biholomorphism with respect to $I_\zeta$ on $\M$ and a complex symplectomorphism, i.e.\ 
\[
\mathrm{ev}_\zeta^*\Omega_\zeta = (\omega_J + i\omega_K) + 2i\zeta \omega_I + \zeta^2 (\omega_J - i \omega_K),
\]
where $\Omega_\zeta$ is the $2$-form on $Z_\zeta$ defined by $\Omega = \Omega_\zeta \otimes \tfrac{i}{2} \frac{\d}{\d \zeta}$. \qed
\end{theorem}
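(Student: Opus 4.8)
The plan is to realize the pseudo-hyperk\"ahler manifold as the parameter space $\M$ of real twistor lines and to recover the triple $(\omega_I, \omega_J, \omega_K)$ by pulling back the twisted fibrewise symplectic form along the evaluation maps. First I would construct $\M$ as a smooth manifold using Kodaira's deformation theory of compact complex submanifolds. A twistor line $s$ is a holomorphic section of $\pi$ with normal bundle $N_s \cong \O(1)^{\oplus 2n}$; since $H^1(\CP^1, \O(1)^{\oplus 2n}) = 0$, the deformations of $s$ are unobstructed and the space of nearby sections is a complex manifold of dimension $h^0(\CP^1, \O(1)^{\oplus 2n}) = 4n$, with tangent space $H^0(\CP^1, N_s)$. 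The real structure $\tau$ induces an antiholomorphic involution on this space whose fixed locus is exactly the set of real twistor lines; thus $\M$ is a smooth real manifold of dimension $4n$, with $T_s\M$ the real (that is, $\tau$-invariant) global sections of $N_s$.

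Next I would build the complex structures. For $\zeta \in \CP^1 \setminus \{\infty\}$ the differential of $\mathrm{ev}_\zeta$ sends a real section $v$ of $N_s$ to its value $v(\zeta)$ in the fibre $(N_s)_\zeta \cong T_{s(\zeta)}Z_\zeta$. The key observation is that a real section of $\O(1)^{\oplus 2n}$ is determined by its value at a single point: if $v(\zeta) = 0$, then reality forces $v$ to vanish also at the antipode $\rho(\zeta)$, and since the antipodal map is fixed-point-free, $v$ would be a section of $\O(1)^{\oplus 2n}$ with two distinct zeros, hence zero. Therefore $\d(\mathrm{ev}_\zeta)$ is injective, and as both sides have real dimension $4n$ it is an isomorphism, so $\mathrm{ev}_\zeta$ is a local diffeomorphism. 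Pulling back the complex structure of $Z_\zeta$ equips $\M$ with an integrable complex structure $I_\zeta$; the points of $\CP^1$ corresponding under the chosen biholomorphism $\CP^1 \cong S^2$ to the standard basis of $\R^3$ produce $I, J, K$, and more generally the $S^2$-family $I_x = x_1 I + x_2 J + x_3 K$.

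Then I would produce the forms. Trivializing $\O(2)$ over $\C \subset \CP^1$, the global section $\Omega$ becomes the fibrewise symplectic form $\Omega_\zeta$, and $\mathrm{ev}_\zeta^*\Omega_\zeta$ is a quadratic polynomial $A + \zeta B + \zeta^2 C$ in $\zeta$ with complex $2$-form coefficients on $\M$. Each $\mathrm{ev}_\zeta^*\Omega_\zeta$ is closed, being the pullback of a symplectic form, so $A, B, C$ are closed; and the reality condition $\tau^*\bar\Omega = -\Omega$, after accounting for the action of $\rho$ on $\O(2)$, forces $C = \bar A$ and $B = -\bar B$. Writing $A = \omega_J + i\omega_K$ and $B = 2i\omega_I$ with $\omega_I, \omega_J, \omega_K$ real and closed then yields the displayed expression for $\mathrm{ev}_\zeta^*\Omega_\zeta$; at $\zeta = 0$ it gives $\mathrm{ev}_0^*\Omega_0 = \omega_J + i\omega_K$, which is of type $(2,0)$ for $I$ because $\mathrm{ev}_0$ is a biholomorphism onto $Z_0$, establishing the complex symplectomorphism statement.

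The main obstacle is verifying that $(\omega_I, \omega_J, \omega_K)$ is genuinely pseudo-hyperk\"ahler. The crucial input is that $\mathrm{ev}_\zeta^*\Omega_\zeta$ is of type $(2,0)$ with respect to $I_\zeta$ for every $\zeta$. Expanding this type condition as $\zeta$ (equivalently $x \in S^2$) varies and matching the resulting identities would show that $\omega_x = x_1\omega_I + x_2\omega_J + x_3\omega_K$ is of type $(1,1)$ and nondegenerate for $I_x$, that the bilinear form $g(\cdot, \cdot) = \omega_x(\cdot, I_x \cdot)$ is symmetric and independent of $x$, and hence that each $I_x$ is pseudo-K\"ahler for the single metric $g$ with K\"ahler form $\omega_x$. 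Since $I_x$ depends linearly on $x \in S^2$ and each $I_x$ is $g$-orthogonal, the $I_x$ automatically satisfy the quaternionic relations, completing the pseudo-hyperk\"ahler structure. This type-decomposition bookkeeping, carried out pointwise in the $2n$-dimensional complex tangent space using the explicit dependence of $I_x$ on $x$ together with the reality constraint, is the heart of the Hitchin--Karlhede--Lindstr\"{o}m--Ro\v{c}ek argument and the step I expect to be most delicate; positive-definiteness is not claimed here and so plays no role.
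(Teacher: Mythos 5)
The paper does not actually prove this theorem: it is stated as a quotation from \cite[\S 3(F)]{HKLR} with no proof given (hence the qed symbol in the statement), so the only benchmark is the original HKLR argument---which is precisely what you have reconstructed. The steps you execute are correct: unobstructedness from $H^1(\CP^1, \O(1)^{\oplus 2n}) = 0$ via Kodaira's deformation theorem; $\M$ as the fixed locus of the antiholomorphic involution $s \mto \tau \circ s \circ \rho$ on the space of sections, with $T_s\M$ the real sections of $N_s$; injectivity of $d(\mathrm{ev}_\zeta)$ because a real section vanishing at $\zeta$ also vanishes at $\rho(\zeta) \neq \zeta$ (the antipodal map being fixed-point free) and a section of $\O(1)$ with two zeros vanishes identically; quadraticity of $\mathrm{ev}_\zeta^*\Omega_\zeta$ in $\zeta$, since for fixed tangent vectors it is a global holomorphic section of $\O(2)$; and the reality constraints $C = \bar{A}$, $B = -\bar{B}$ giving the displayed formula, with closedness of $\omega_I, \omega_J, \omega_K$ inherited from the fibrewise symplectic forms. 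Two points deserve flagging. First, a minor one: you should note that sections near a real twistor line still have normal bundle $\O(1)^{\oplus 2n}$ (an open condition, e.g.\ by semicontinuity of $h^0(N(-2))$), so that $\M$ genuinely consists of twistor lines. Second, and more substantively, your final paragraph invokes the linearity $I_x = x_1 I + x_2 J + x_3 K$ as available input (``Since $I_x$ depends linearly on $x$\ldots''), whereas in HKLR this must be \emph{established} before the type-decomposition bookkeeping, not extracted from it: one identifies $T_s\M \otimes \C \cong H^0(\CP^1, \O(1)) \otimes \C^{2n}$, observes that $\tau$ induces antilinear maps of square $-1$ on both tensor factors (the antipodal map has no real points, so sections of $\O(1)$ carry a quaternionic structure), so that $T_s\M \cong \mathbb{H}^n$, and verifies that the complex structures pulled back via $\mathrm{ev}_\zeta$ are exactly the standard $2$-sphere of complex structures on $\mathbb{H}^n$. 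With that lemma in place, your proposed expansion of the $(2,0)$-type condition over all $\zeta$---yielding the $(1,1)$-type and nondegeneracy of $\omega_x$, the symmetry and $x$-independence of $g$, and hence the quaternionic relations---is the genuine HKLR argument; given that the paper cites the result rather than reproving it, a sketch at this level of detail is entirely appropriate.
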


In particular, if we find a single real twistor line $s : \CP^1 \to Z$ passing through a point $x \in Z_0$, the theorem says that $s$ lives in a $4n$-dimensional family of real twistor lines and that there is a hyperk\"ahler structure on a neighbourhood of $x$ in $Z_0$. 
If, more generally, we have a submanifold $X \s Z_0$ and a family real twistor lines $s_x \in \M$ parametrized by $X$ such that $s_x(0) = x$ for all $x \in X$, then there is a hyperk\"ahler structure on a neighbourhood of $X$ in $Z_0$ compatible with its holomorphic symplectic structure. 
This is the idea behind Feix's proof \cite{feix-cotangent}, which summarize as follows:

\begin{corollary}\label{family-of-twistor-lines}
Let $(Z, \pi, \Omega, \tau)$ be a hyperk\"ahler twistor space and let $\M$ be the space of real twistor lines.
Suppose that there is a smooth submanifold $X \s Z_0$ and a smooth map $X \to \M : x \mto s_x$ such that $s_x(0) = x$ for all $x \in X$.  
Then, there is a unique pseudo-hyperk\"ahler structure $(\omega_I, \omega_J, \omega_K)$ on a neighbourhood of $X$ in $Z_0$
such that
\[
\mathrm{ev}_\zeta^* \Omega_\zeta
=
\mathrm{ev}_0^*( (\omega_J + i\omega_K) + 2i \zeta \omega_I + \zeta^2 (\omega_J - i \omega_K))
\]
for all $\zeta \ne \infty$, where $\Omega_\zeta$ is defined by $\Omega = \Omega_\zeta \otimes \frac{i}{2} \frac{\d}{\d \zeta}$. 
\end{corollary}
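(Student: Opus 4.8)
The plan is to transport the pseudo-hyperk\"ahler structure produced on $\M$ by Theorem \ref{7d5uk40i} onto a neighbourhood of $X$ in $Z_0$ along a holomorphic section of the evaluation map $\mathrm{ev}_0$ that extends the given assignment $j : X \to \M$, $j(x) \coloneqq s_x$. First I would record the output of Theorem \ref{7d5uk40i}: the space $\M$ is a smooth manifold carrying a pseudo-hyperk\"ahler structure, which I temporarily write as a triple $(\omega_I^\M, \omega_J^\M, \omega_K^\M)$; the map $\mathrm{ev}_0 : \M \to Z_0$ is a local biholomorphism with respect to $I_0$; and
\begin{equation}\label{cor-hklr-id}
\mathrm{ev}_\zeta^* \Omega_\zeta = (\omega_J^\M + i \omega_K^\M) + 2i\zeta\, \omega_I^\M + \zeta^2 (\omega_J^\M - i\omega_K^\M)
\end{equation}
holds on $\M$ for every $\zeta \ne \infty$. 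The hypothesis $s_x(0) = x$ says exactly that $\mathrm{ev}_0 \circ j = \iota$, the inclusion $X \hookrightarrow Z_0$; in particular $j$ is an embedding and $\mathrm{ev}_0$ inverts it along $j(X)$.

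Next I would upgrade $j$ to a holomorphic section of $\mathrm{ev}_0$ over a neighbourhood of $X$. Since $\mathrm{ev}_0$ is a local biholomorphism, every point $s_x = j(x)$ has a neighbourhood $O_x \subseteq \M$ carried biholomorphically by $\mathrm{ev}_0$ onto an open set $U_x \ni x$ in $Z_0$; let $\psi_x \coloneqq (\mathrm{ev}_0|_{O_x})^{-1}$. Continuity of $j$ together with $\mathrm{ev}_0 \circ j = \iota$ forces $\psi_x = j$ on $U_x \cap X$ after shrinking, and two holomorphic sections of $\mathrm{ev}_0$ that agree at one point agree on the whole connected component containing it. Choosing the neighbourhood $W \coloneqq \bigcup_x U_x$ to be a tubular neighbourhood of $X$, so that each overlap meets $X$ in a connected set, the branches $\psi_x$ therefore glue to a single holomorphic map $\Psi : W \to \M$ with $\mathrm{ev}_0 \circ \Psi = \mathrm{id}_W$ and $\Psi|_X = j$. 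Then $\Psi$ is a biholomorphism onto the open set $\Psi(W) \subseteq \M$, with inverse $\mathrm{ev}_0|_{\Psi(W)}$.

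I would then define the structure by pulling back: $\omega_I \coloneqq \Psi^* \omega_I^\M$, and likewise for $\omega_J$ and $\omega_K$, on $W$. As the pullback of a pseudo-hyperk\"ahler structure by the biholomorphism $\Psi$, which is holomorphic for $I_0$ so that the distinguished complex structure of the pulled-back triple is the ambient one on $Z_0$, the triple $(\omega_I, \omega_J, \omega_K)$ is again pseudo-hyperk\"ahler. On $\Psi(W)$ one has $\Psi \circ \mathrm{ev}_0 = \mathrm{id}$, hence $\mathrm{ev}_0^* \omega_\bullet = \mathrm{ev}_0^* \Psi^* \omega_\bullet^\M = \omega_\bullet^\M$; substituting this into \eqref{cor-hklr-id} turns it into precisely the asserted identity, whose $\zeta = 0$ part reads $\Omega_0 = \omega_J + i\omega_K$. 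Uniqueness is then formal: if another triple satisfies the same identity, then $\mathrm{ev}_0^*$ of the two resulting $\zeta$-polynomials coincide, and since $\mathrm{ev}_0$ is a local biholomorphism its pullback on forms is injective, so comparing the coefficients of $1$, $\zeta$, and $\zeta^2$ yields equality of the two triples.

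The one step that is not purely formal is the construction of $\Psi$: the map $\mathrm{ev}_0$ is only a \emph{local} biholomorphism, so the pointwise inverse branches selected by $j$ must be shown to be mutually consistent on overlaps. This is exactly where the tubular-neighbourhood choice and the rigidity of holomorphic sections, determined by a single value on a connected set, are used, and it is the reason the hyperk\"ahler structure is obtained only on a neighbourhood of $X$ rather than on all of $Z_0$.
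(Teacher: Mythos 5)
Your proposal is correct and takes essentially the same approach as the paper: both transport the pseudo-hyperk\"ahler structure on $\M$ produced by Theorem \ref{7d5uk40i} to a neighbourhood of $X$ in $Z_0$ using that $\mathrm{ev}_0$ is a local diffeomorphism admitting the section $x \mto s_x$ over $X$ --- the paper pushes the structure forward along $\mathrm{ev}_0$ restricted to a small neighbourhood of $s(X)$ in $\M$, which is precisely the inverse of your glued section $\Psi$. The extra details you supply (consistency of the inverse branches on overlaps, and the coefficient comparison giving uniqueness) are points the paper's two-line proof leaves implicit.
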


\begin{proof}
Since $\mathrm{ev}_0 : \M \to Z_0$ is a local diffeomorphism and $s : X \to \M : x \mto s_x$ is a section of $\mathrm{ev}_0$, $s$ is an embedding.
Hence, by restricting to a sufficiently small neighbourhood $U$ of $s(X)$ in $\M$, $\mathrm{ev}_0 : U \to Z_0$ is a diffeomorphism onto its image, so we can pushforward the pseudo-hyperk\"ahler structure.
\end{proof}

We now make a few observations which will be useful to find twistor lines.

\begin{lemma}\label{zgd97fbr}
Let $(Z, \pi, \Omega, \tau)$ be a $(2n + 1)$-dimensional hyperk\"ahler twistor space.
Then, the normal bundle of any holomorphic section of $\pi : Z \to \CP^1$ has degree $2n$.
\end{lemma}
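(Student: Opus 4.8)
The plan is to reduce the computation of $\deg N_s$ to a statement about the vertical tangent bundle $\ker d\pi$, and then to exploit the fibrewise symplectic form $\Omega$ to pin down its determinant. First I would observe that, because $s$ is a section of the submersion $\pi : Z \to \CP^1$, the normal bundle $N_s$ of the holomorphic curve $s(\CP^1)$ is canonically isomorphic to $s^*(\ker d\pi)$. Indeed, the differential $ds$ splits the pullback along $s$ of the tautological sequence
\[
0 \too \ker d\pi \too TZ \too \pi^*T\CP^1 \too 0,
\]
since $d\pi \circ ds = \mathrm{id}$; hence $s^*TZ \cong s^*(\ker d\pi) \oplus T\CP^1$, and the cokernel of $ds : T\CP^1 \to s^*TZ$, which is $N_s$, is isomorphic to $s^*(\ker d\pi)$ as a holomorphic bundle on $\CP^1 \cong s(\CP^1)$.

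The key step is to compute $\det(\ker d\pi)$ using $\Omega$. Since $\Omega$ is a holomorphic section of $\Exterior^2(\ker d\pi)^* \otimes \pi^*\O(2)$ restricting to a holomorphic symplectic (in particular non-degenerate) form on each $2n$-dimensional fibre $Z_\zeta$, its $n$-th fibrewise exterior power $\Omega^{\wedge n}$ is a nowhere-vanishing holomorphic section of
\[
\Exterior^{2n}(\ker d\pi)^* \otimes \pi^*\O(2)^{\otimes n} = \det(\ker d\pi)^* \otimes \pi^*\O(2n).
\]
A nowhere-vanishing section trivializes this line bundle, giving $\det(\ker d\pi) \cong \pi^*\O(2n)$ on $Z$. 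Pulling back along $s$ and taking degrees then yields
\[
\deg N_s = \deg \det\!\big(s^*(\ker d\pi)\big) = \deg\, s^*\pi^*\O(2n) = \deg \O(2n) = 2n,
\]
using $\pi \circ s = \mathrm{id}_{\CP^1}$ together with the identification of the first paragraph.

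The only point requiring care is the non-degeneracy claim: I would verify that $\Omega^{\wedge n}$ is nowhere vanishing by recalling that non-degeneracy of the holomorphic symplectic form $\Omega_\zeta$ on each fibre $Z_\zeta$ is precisely the statement that its top power does not vanish, and that this holds for every $\zeta$ by hypothesis (b) of Definition \ref{twistor-space}. I note that neither the real structure $\tau$ nor the twistor-line splitting type $\O(1)^{\oplus 2n}$ enters the argument; the conclusion holds for \emph{arbitrary} holomorphic sections of $\pi$, which is exactly what makes Lemma \ref{zgd97fbr} a useful a priori constraint when searching for twistor lines.
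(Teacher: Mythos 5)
Your proposal is correct and follows essentially the same argument as the paper: identify the normal bundle with $s^*(\ker d\pi)$, then use the nowhere-vanishing top power $\Omega^n$ to conclude $\det(N) \cong \O(2n)$. The only cosmetic difference is that you trivialize $\det(\ker d\pi)^* \otimes \pi^*\O(2n)$ over all of $Z$ before pulling back, whereas the paper restricts $\Omega$ to the section first; the content is identical.
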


\begin{proof}
The normal bundle $N$ of a holomorphic section $s$ can be identified with $s^*\ker d\pi$, so $\Omega$ restricts to a section of $\Exterior^2N^* \otimes \O(2)$ and hence $\Omega^n$ gives a non-vanishing section of $\Exterior^{2n}N^* \otimes \O(2n)$, so $\det(N) \cong \O(2n)$.
\end{proof}

To get a twistor line, we need moreover that $N \cong \O(1)^{\oplus 2n}$, and this will be obtained from the following lemma.

\begin{lemma}[{cf.\ \cite[p.\ 41]{feix-cotangent}}]\label{yaed55v1}
Let $E \to \CP^1$ be a holomorphic vector bundle of rank $m$.
Then, $E \cong \O(1)^{\oplus m}$ if and only if $\deg(E) = m$ and there exist global holomorphic sections $s_1, \ldots, s_m$ of $E$ which are linearly independent in at least one of the fibres and each $s_i$ has a zero.
\end{lemma}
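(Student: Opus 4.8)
The plan is to reduce everything to Grothendieck's splitting theorem, which gives a decomposition $E \cong \bigoplus_{i=1}^m \O(a_i)$ with $a_1 \ge \cdots \ge a_m$, turning the statement into a combinatorial assertion about the integers $a_i$: the hypotheses should force $a_1 = \cdots = a_m = 1$. Since $\deg(E) = \sum_i a_i$, the degree condition reads $\sum_i a_i = m$, so it suffices to prove that each $a_i \ge 1$; the equality $\sum_i a_i = m$ then forces every $a_i = 1$ and hence $E \cong \O(1)^{\oplus m}$.

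The forward implication is immediate and I would dispose of it by an explicit construction. Given $E = \O(1)^{\oplus m}$, I choose distinct points $p_1, \dots, p_m \in \CP^1$ and sections $t_i \in H^0(\O(1))$ with $t_i$ vanishing at $p_i$, and set $s_i$ to be $t_i$ placed in the $i$-th summand. Then each $s_i$ vanishes at $p_i$, while away from $\{p_1, \dots, p_m\}$ all the $t_i$ are nonzero, so the $s_i$ are linearly independent there.

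For the reverse implication, the crux is to see how the two hypotheses constrain the components of the sections relative to the splitting. Writing $s_j = (s_j^1, \dots, s_j^m)$ with $s_j^i \in H^0(\O(a_i))$, I would argue that every section must vanish identically in all summands of non-positive degree. Indeed, when $a_i < 0$ there are no nonzero sections at all, while when $a_i = 0$ a section of $\O$ is a constant; if that constant were nonzero it would be nowhere vanishing, forcing $s_j$ itself to be nowhere vanishing and contradicting the hypothesis that each $s_j$ has a zero. Hence every $s_j$ lies in the subbundle $F \coloneqq \bigoplus_{a_i \ge 1} \O(a_i)$. The linear-independence hypothesis then finishes the argument: since the $s_j$ span an $m$-dimensional space in some fibre and all lie in $F$, the rank of $F$ is at least $m$, so $\#\{i : a_i \ge 1\} = m$, i.e.\ $a_i \ge 1$ for all $i$, which combined with $\sum_i a_i = m$ gives $a_i = 1$ for all $i$.

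The only genuine subtlety—and the real content of the lemma—is the interplay exploited in the last step: the degree count $\sum_i a_i = m$ and linear independence alone would still permit a splitting type such as $\O(2) \oplus \O(0)$, and it is precisely the hypothesis that \emph{each} section has a zero that rules out a nonzero constant in a trivial summand and thereby excludes any $\O(a_i)$ with $a_i \le 0$. I expect this to be the one point requiring care; the remaining steps are formal consequences of Grothendieck's theorem and the vanishing $H^0(\O(a)) = 0$ for $a < 0$.
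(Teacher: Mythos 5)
Your proof is correct and follows essentially the same route as the paper: Grothendieck's splitting theorem, then using the zero-of-each-section hypothesis to show every section has vanishing component in each summand $\O(a_i)$ with $a_i \le 0$, so that linear independence in a fibre forces all $a_i \ge 1$ and the degree count gives $a_i = 1$. The only cosmetic difference is that the paper phrases this as a contradiction (assuming some $k_i \le 0$) while you argue directly via the subbundle $F$, and you spell out the forward direction that the paper dismisses as clear.
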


\begin{proof}
The forward direction is clear. 
Conversely, by Grothendieck's theorem, we have $E = \bigoplus_{i = 1}^m \O(k_i)$ for some $k_i \in \Z$ such that $\sum_{i = 1}^m k_i = m$, so it suffices to show that $k_i > 0$ for all $i$.  
Suppose that $k_i \le 0$ for some $i$. If $k_i = 0$, then for all $j$, the projection of $s_j$ to $\O(k_i) = \O$ is zero since $s_j$ has a zero. 
This is also true if $k_i < 0$ since then $\O(k_i)$ has no non-zero section.
In particular, for all $\zeta \in \CP^1$, $s_1(\zeta), \ldots, s_m(\zeta)$ are contained in an $(m - 1)$-dimensional subspace of $E_\zeta$, contradicting that they linear independent in one of the fibres.
\end{proof}

\subsection{The gluing construction}
\label{gluing-construction}

If $(Z, \pi, \Omega, \tau)$ is a hyperk\"ahler twistor space, the real structure $\tau$ identifies $Z|_{\CP^1\setminus\{\infty\}}$ and $Z|_{\CP^1\setminus\{0\}}$. 
Hence, $Z$ can be recovered from its restriction to $\C \s \CP^1$ and the gluing map $\tau : Z|_{\C^*} \to Z|_{\C^*}$.
In the context of deformation theory, we obtain deformation spaces $Z \to \C$ over arbitrarily small neighbourhoods of $0$ in $\C$, so it is useful to formulate this gluing principle slightly more generally.

For $\e, \delta > 0$, define the discs, circle, and annulus
\begin{align*}
D_\e &\coloneqq \{\zeta \in \C : |\zeta| < \e\} \\
\bar{D}_\e &\coloneqq \{\zeta \in \C : |\zeta| \le \e\} \\
S^1_\e &\coloneqq \{\zeta \in \C : |\zeta| = \e\} \\
A_{\e, \delta} &\coloneqq \{\zeta \in \C : \frac{\e^2}{\e + \delta} < |\zeta| < \e +
\delta\}.
\end{align*}
Then, the antipodal map $\zeta \mto -\zeta$ on $S^1_\e$ extends uniquely to a real structure
\begin{equation}\label{antipodal}
\rho_\e : \C^* \too \C^*, \quad \rho_\e(\zeta) = -\e^2 / \bar{\zeta},
\end{equation}
which restricts to $A_{\e, \delta} \to A_{\e, \delta}$ for all $\delta > 0$.
Moreover, $\CP^1$ can be constructed by gluing two copies of $D_{\e + \delta}$ by $\rho_\e: A_{\e, \delta} \to A_{\e, \delta}$, where the second copy of $D_{\e + \delta}$ has the opposite complex structure.
Twistor spaces can be built similarly by deformation spaces over small discs in $\C$:

\begin{proposition}\label{local-twistor-space}
Let $Z$ be a complex manifold together with
\begin{enumerate}
\item[(a)] a holomorphic submersion $\pi : Z \to \C$ such that $\bar{D}_\e \s \pi(Z)$ for some $\e > 0$,
\item[(b)] a global holomorphic section $\Omega$ of $\Exterior^2(\ker d\pi)^* \to Z$ which restricts to a holomorphic symplectic form on each fibre of $\pi$,
\item[(c)] a real structure $\tau : U \to U$ covering $\rho_\e$ and satisfying $\tau^* \bar{\Omega} = \e^2 \Omega / \pi^2$, where $U \s Z$ is an open set such that $S^1_\e \s \pi(U)$ and such that $\tau$ extends continuously to $\bar{U} \setminus Z_0$.
\end{enumerate}
Then, this data defines a hyperk\"ahler twistor space $\mathcal{Z}$ extending $(Z, \pi, \Omega, \tau)$, where $\mathcal{Z}$ is defined by gluing two copies of $V \coloneqq U \cup \pi^{-1}(D_\e)$ via $\tau : U \to U$,
\[
\mathcal{Z} \coloneqq (V \sqcup V) / \{z \sim \tau(z)\}.
\]
A section $s$ of $\mathcal{Z}$ is equivalent to a pair of sections $s_\pm : D_{\e + \delta} \to U \cup \pi^{-1}(D_\e)$ of $\pi$, for some $\delta > 0$, such that $\tau(s_+(\zeta)) = s_-(\rho_\e(\zeta))$ for all $\zeta \in s_+^{-1}(U)$. 
The section is real if and only if $s_+ = s_-$. 
The normal bundle of $s$ in $\mathcal{Z}$ is isomorphic to the gluing of the bundles $s_+^*(\ker d\pi)$ and $s_-^*(\ker d\pi)$ over $D_{\e + \delta}$ via $d\tau$.
\end{proposition}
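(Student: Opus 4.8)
The plan is to realize $\mathcal{Z}$ explicitly as the quotient $(V\sqcup V)/\{z\sim\tau(z)\}$ and then verify the three axioms of Definition \ref{twistor-space} directly, reading off the statements about sections from the construction. The key structural point is that the second copy of $V$ must carry the \emph{conjugate} complex structure $\bar V$: because $\tau$ is anti-holomorphic, it is holomorphic as a map $V\supseteq U\to\bar U\subseteq\bar V$, so $z\sim\tau(z)$ is a biholomorphic gluing, and $\tau^2=\mathrm{id}$ makes the relation symmetric. First I would note that $\pi(V)=\pi(U)\cup D_\e\supseteq D_{\e+\delta}$ for some $\delta>0$, since $\pi$ is open and $\pi(U)$ is a neighbourhood of $S^1_\e$; thus each copy of $V$ fibres over $D_{\e+\delta}$. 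Hausdorffness of the quotient along the seam $S^1_\e$ is exactly what the hypothesis that $\tau$ extends continuously to $\bar U\setminus Z_0$ provides.

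Next I would build the submersion $\pi_{\mathcal{Z}}\colon\mathcal{Z}\to\CP^1$. Using $\zeta=\pi$ on the first copy and the holomorphic coordinate $w=\bar\pi$ on the conjugate copy, the covering identity $\pi\circ\tau=\rho_\e\circ\pi$ yields on the overlap $w=\overline{\rho_\e(\zeta)}=-\e^2/\zeta$, a holomorphic transition. Since $\CP^1$ is precisely two copies of $D_{\e+\delta}$ glued by this map, $\pi_{\mathcal{Z}}$ is a well-defined surjective holomorphic submersion, giving axiom (a). The first copy of $V$ reproduces $(Z,\pi,\Omega,\tau)$ over $D_{\e+\delta}$, so $\mathcal{Z}$ extends it as claimed.

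The crux is axiom (b), upgrading the fibrewise $\Omega$ to a global holomorphic section of $\Exterior^2(\ker d\pi_{\mathcal{Z}})^*\otimes\pi_{\mathcal{Z}}^*\O(2)$. Identifying $\O(2)\cong T\CP^1$ and trivializing by $\partial/\partial\zeta$ and $\partial/\partial w$ on the two charts, the transition $w=-\e^2/\zeta$ gives $\partial/\partial w=(\zeta^2/\e^2)\,\partial/\partial\zeta$. On the conjugate copy the fibrewise symplectic form is $\bar\Omega$, so transporting the second-chart section $\bar\Omega\otimes\partial/\partial w$ across the seam by $\tau$ produces $\tau^*\bar\Omega\otimes(\zeta^2/\e^2)\,\partial/\partial\zeta$; substituting $\tau^*\bar\Omega=\e^2\Omega/\pi^2=\e^2\Omega/\zeta^2$, the two scalar factors cancel and leave exactly the first-chart section $\Omega\otimes\partial/\partial\zeta$. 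Hence condition (c) is engineered precisely so that the $\e^2/\pi^2$ twist inverts the $\O(2)$ transition function, making $\Omega$ a genuine $\O(2)$-valued (not $\O$-valued) global section; fibrewise nondegeneracy is inherited from $\Omega$. The \emph{main obstacle} is this bookkeeping---tracking the conjugate structure, the anti-holomorphic $\tau$, and the $\O(2)$ transition at once; once it is done, axiom (c) of Definition \ref{twistor-space} is the same computation, with the swap of the two copies covering the antipodal map $\rho$ and the sign in $\tau_{\mathcal{Z}}^*\bar\Omega=-\Omega$ coming from the action of $\rho$ on $\O(2)\cong T\CP^1$.

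Finally, the statements on sections follow from the two-chart description. A section of $\pi_{\mathcal{Z}}$ is a pair $s_\pm\colon D_{\e+\delta}\to V$ of sections of $\pi$ that agree on the overlap through the gluing, i.e.\ $\tau(s_+(\zeta))=s_-(\rho_\e(\zeta))$ on $s_+^{-1}(U)$; since $\tau_{\mathcal{Z}}$ interchanges the two copies, such a section is real exactly when the halves coincide, $s_+=s_-$. Its normal bundle is $s^*(\ker d\pi_{\mathcal{Z}})$, which over the two charts restricts to $s_+^*(\ker d\pi)$ and $s_-^*(\ker d\pi)$ glued over the annulus by the fibrewise derivative $d\tau$, which is the asserted description.
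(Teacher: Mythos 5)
Your construction coincides with the paper's: put the conjugate complex structure on the second copy of $V$, let the swap of the two copies be the real structure, assemble $\pi : \mathcal{Z} \to \CP^1$ from the two coordinate discs glued by $\rho_\e$, and obtain the $\O(2)$-valued form from the cancellation of $\tau^*\bar{\Omega} = \e^2\Omega/\pi^2$ against the transition $\tfrac{\d}{\d w} = (\zeta^2/\e^2)\tfrac{\d}{\d \zeta}$. Your gluing computation is exactly the paper's identity $\tau^*\bar{\Psi} = -\Psi$ for $\Psi = \Omega \otimes \tfrac{i}{2}\tfrac{\d}{\d\zeta}$, with the sign absorbed into the trivializing factor, and both arguments read the statements about sections directly off the construction.

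The one genuine gap is Hausdorffness of $\mathcal{Z}$, which you dispose of in a single sentence by saying it ``is exactly what the hypothesis that $\tau$ extends continuously to $\bar{U}\setminus Z_0$ provides.'' That is an assertion, not an argument, and it also misstates where the difficulty lies. The separation problem is not over $S^1_\e$ (interior points of $U$ are unproblematic, since there the quotient is locally just $V$) but at points of $\bar{U} \cap V \setminus U$: one must produce, for any two such points $x, y$, neighbourhoods $V_x, V_y$ in $V$ with $\tau(V_x \cap U) \cap V_y = \emptyset$. When $x \notin Z_0$ the hypothesis does apply: $\tau(x)$ is defined, one checks $\tau(x) \notin V$ (indeed $|\pi(\tau(x))| = \e^2/|\pi(x)| > \e$ since $x \in \pi^{-1}(D_\e)$, and $\tau(x) \in U$ would force $x = \tau(\tau(x)) \in U$), so $\tau(x) \ne y$ and the Hausdorffness of $Z$ separates them. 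But when $x \in \bar{U} \cap Z_0$ the hypothesis gives nothing --- $\tau$ is precisely \emph{not} assumed to extend over $Z_0$ --- and this is the case your sentence cannot cover. There one needs a separate geometric argument exploiting that $\rho_\e$ inverts moduli: $\tau$ sends points of $U$ with $|\pi| < \delta$ to points with $|\pi| > \e^2/\delta$, so for $\delta$ small enough the image $\tau(\pi^{-1}(D_\delta) \cap U)$ cannot meet a neighbourhood $\pi^{-1}(A_{|\pi(y)|, \delta}) \cap V$ of $y$. This two-case analysis is the content of the paper's dedicated Hausdorffness lemma; without it the quotient could a priori fail to be Hausdorff where $Z_0$ in one copy meets the fibre over $\infty$ coming from the other copy, which is exactly the failure mode of such gluings (compare the line with two origins).
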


\begin{lemma}
The space $\mathcal{Z}$ is Hausdorff.
\end{lemma}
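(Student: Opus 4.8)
The plan is to exhibit $\mathcal{Z}$ as the quotient of $V \sqcup V$ by an \emph{open} equivalence relation and to invoke the standard criterion that such a quotient is Hausdorff precisely when the graph of the relation is closed. Write $V_1, V_2$ for the two copies of $V$. First I would check that the quotient map $q : V \sqcup V \to \mathcal{Z}$ is open: since $\mathcal{Z}$ is obtained by gluing $V_1$ and $V_2$ along the open subset $U$ via the homeomorphism $\tau$, for any open $W \subseteq V_1$ the saturation $q^{-1}(q(W)) = W \sqcup \tau(W \cap U)$ is again open (as $\tau$ is a homeomorphism), and similarly for open subsets of $V_2$. Using that each $V_i$ is clopen in $V \sqcup V$, the graph $R \subseteq (V \sqcup V)^2$ splits as the diagonals of $V_1 \times V_1$ and $V_2 \times V_2$ (closed, since $V \subseteq Z$ is Hausdorff) together with $\Gamma = \{(z, \tau(z)) : z \in U\} \subseteq V_1 \times V_2$ and its transpose. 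It therefore suffices to show that $\Gamma$ is closed.

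Next I would take a sequence $z_n \in U$ with $(z_n, \tau(z_n)) \to (a, b)$ in $V_1 \times V_2$ and argue that $(a,b) \in \Gamma$. The essential structural input is that $\tau$ covers $\rho_\e(\zeta) = -\e^2/\bar\zeta$, which is defined only on $\C^*$ and swaps the interior and exterior of $S^1_\e$ because $|\rho_\e(\zeta)| = \e^2/|\zeta|$; in particular $\pi(U) \subseteq \C^*$, so $U \cap Z_0 = \varnothing$. I would first rule out $a \in Z_0$: if $\pi(a) = 0$ then $|\pi(\tau(z_n))| = \e^2/|\pi(z_n)| \to \infty$, contradicting $\pi(\tau(z_n)) \to \pi(b)$. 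Hence $a \in \bar{U} \setminus Z_0$, where $\tau$ extends continuously by hypothesis (c), so uniqueness of limits in the manifold $Z$ forces $b = \tau(a)$.

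The main obstacle is upgrading $a \in \bar{U}$ to $a \in U$, and this is exactly where the geometry of $\rho_\e$ enters. Suppose $a \in \bar{U} \setminus U$; as $a \in V_1 = U \cup \pi^{-1}(D_\e)$, this forces $|\pi(a)| < \e$, whence $|\pi(b)| = |\rho_\e(\pi(a))| = \e^2/|\pi(a)| > \e$, so $b \notin \pi^{-1}(D_\e)$ and therefore $b \in U$. Since $U$ is open and $\tau(z_n) \to b$, eventually $\tau(z_n) \in U$, and applying the involution $\tau$ on $U$ gives $z_n = \tau(\tau(z_n)) \to \tau(b) \in U$ by continuity of $\tau$ on $U$; thus $a = \tau(b) \in U$, contradicting $a \notin U$. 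Hence $a \in U$ and $(a,b) = (a, \tau(a)) \in \Gamma$, so $\Gamma$ is closed; its transpose is closed by the same argument (or by symmetry of $R$). Therefore $R$ is closed and $\mathcal{Z}$ is Hausdorff. The crux is precisely this final step: the interchange of the two sides of $S^1_\e$ by $\rho_\e$, combined with the involutivity of $\tau$, is what prevents a sequence in $U$ from acquiring two distinct limits in the two copies of $V$.
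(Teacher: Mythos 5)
Your proof is correct, and it takes a genuinely different topological route from the paper's, even though the geometric core is the same. The paper argues directly with the separation axiom: it observes that it suffices to produce, for any $x, y \in \bar{U} \cap V \setminus U$, neighbourhoods $V_x, V_y$ in $V$ with $\tau(V_x \cap U) \cap V_y = \emptyset$ (disjointness of the saturated neighbourhoods), and then constructs these by hand, splitting into the case $x \in Z_0$ (where $|\pi \circ \tau| = \e^2/|\pi|$ blows up near $Z_0$) and the case $x \notin Z_0$ (where the key claim is that the continuous extension satisfies $\tau(x) \notin V$, proved via $|\pi(\tau(x))| > \e$ and the involution identity $\tau \circ \tau = \mathrm{id}$). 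You instead invoke the standard criterion that a quotient by an open equivalence relation is Hausdorff if the graph of the relation is closed, split the graph over the four clopen pieces of $(V \sqcup V)^2$, and verify closedness of $\Gamma$ by a limit argument. The two proofs rest on exactly the same two geometric facts: your step ruling out $a \in Z_0$ is the paper's first case, and your final step is precisely the contrapositive of the paper's claim that $\tau(x) \notin V$ for $x \in (\bar{U} \cap V \setminus U) \setminus Z_0$. What your packaging buys is that you never construct explicit separating neighbourhoods; in particular you sidestep the slightly delicate point in the paper where $V_x$ is defined as $V \cap \tau^{-1}(W_x)$ using the extension of $\tau$, which is only defined on $\bar{U} \setminus Z_0$, so that openness of this set in $V$ deserves a word. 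The small price of your route is that checking closedness by sequences requires first countability; this holds because $Z$ is a manifold, but you should say so (or note that the identical argument runs with nets).
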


\begin{proof}
It suffices to show that for all $x, y \in \bar{U} \cap V \setminus U$ there are neighbourhoods $V_x$ and $V_y$ of $x$ and $y$ in $V$ such that $\tau(V_x \cap U) \cap V_y = \emptyset$. 
If $x \in Z_0$, we can take $0 < \delta < \e$ small enough so that $|\pi(y)| + \delta < \frac{1}{\delta}$ and let $V_x = \pi^{-1}(D_\delta)$ and $V_y = \pi^{-1}(A_{|\pi(y)|, \delta}) \cap V$. 
Then, if $z \in \tau(V_x \cap U) \cap V_y$, we have $\frac{1}{\delta} < |\pi(z)| < |\pi(y)| + \delta$, contradicting the choice of $\delta$. 
Hence, we may assume that $x \in \bar{U} \setminus Z_0$ so that $\tau(x)$ is well-defined. 
We claim that $\tau(x) \notin V$. 
First, note that $|\pi(\tau(x))| > \e$ since $\pi(\tau(x)) = \rho_\e(\pi(x))$ and $|\pi(x)| < \e$. 
Hence, if $\tau(x) \in V$, then $\tau(x) \in U$ so $x = \tau(\tau(x)) \in U$, a contradiction. 
So $\tau(x) \notin V$ and hence $\tau(x) \neq y$. 
Since $Z$ is Hausdorff, there are neighbourhoods $W_x$ and $W_y$ of $\tau(x)$ and $y$ in $Z$ such that $W_x \cap W_y = \emptyset$ so we can take $V_x = V \cap \tau^{-1}(W_x)$ and $V_y = V \cap W_y$.
\end{proof}

\begin{proof}[Proof of Proposition \ref{local-twistor-space}]
Since $\tau$ is anti-holomorphic, it follows that $\mathcal{Z}$ is a complex manifold by endowing the second copy of $V$ with the opposite complex structure. 
Moreover, by identifying $\CP^1$ with the gluing of two copies of $D_{\e + \delta}$ by $\rho_\e$, we get a surjective holomorphic submersion $\pi : \mathcal{Z} \to \CP^1$. 
The real structure $\mathcal{T} : \mathcal{Z} \to \mathcal{Z}$ is defined to be the anti-holomorphic involution which descends from the map $V \sqcup V \to V \sqcup V$ exchanging the two copies of $V$. 

Now, consider the local section $\Psi \coloneqq \Omega \otimes \frac{i}{2} \frac{\d}{\d \zeta}$ of $\Exterior^2(\ker d\pi)^* \otimes T\CP^1$.  
Let $\tau_\zeta : U \cap Z_\zeta \to U \cap Z_{\rho_\e(\zeta)}$ be the restriction of $\tau$ on the fibres. 
Then, the condition $\tau^* \bar{\Omega} = \e^2 \Omega / \pi^2$ is equivalent to $\tau_\zeta^* \bar{\Omega}_{\rho_\e(\zeta)} = \e^2 \Omega_\zeta / \zeta^2$, so we have
\[
\tau^*\bar{\Psi} = \tau_\zeta^*\bar{\Omega}_{\rho_\e(\zeta)} \otimes d\rho_\e(-\tfrac{i}{2}\tfrac{\d}{\d\overline{\zeta}}\big|_{\rho_\e(\zeta)}) =  (\tfrac{\e^2}{\zeta^2} \Omega_\zeta) \otimes (-\tfrac{i}{2} \tfrac{\zeta^2}{\e^2} \tfrac{\d}{\d\zeta}) = - \Psi.
\]
Hence, $\Psi$ glues to a global holomorphic section of $\Exterior^2 (\ker d\pi)^* \otimes T\CP^1 \to \mathcal{Z}$ such that $\mathcal{T}^*\bar{\Psi} = -\Psi$.  

The final statements about holomorphic sections of $\mathcal{Z}$ follow directly from the definition of $\mathcal{Z}$.
\end{proof}

One way to think of the twistor correspondence is that we start with a deformation of holomorphic symplectic structures $\Omega_\zeta$ on a holomorphic symplectic manifold $(M, \Omega_0)$, which is not necessarily quadratic in $\zeta$.
If the deformation space $Z = M \times \C$ has a real structure $\tau$ enabling us to glue it to a twistor space as in Proposition \ref{local-twistor-space}, then there is a family of diffeomorphisms $f_\zeta \coloneqq \mathrm{ev}_\zeta \circ \mathrm{ev}_0^{-1}: M \to M$ with the very special property that $f_\zeta^*\Omega_\zeta = \Omega_0 + 2i\zeta \omega_I + \zeta^2 \bar{\Omega}_0$ for some K\"ahler form $\omega_I$ whose metric is hyperk\"ahler.
In some situations, such as Hitchin's construction of the Gibbons--Hawking gravitational multi-instantons \cite{hitchin-polygons}, this family of diffeomorphisms $f_\zeta$ can be computed explicitly by solving algebraic equations (i.e.\ finding all real twistor lines), which then produces an explicit description of the hyperk\"ahler metric.

\section{Hyperk\"ahler metrics near Lagrangian submanifolds}
\label{lagrangian-theorem-proof}

The goal of this section is prove Theorem \ref{lagrangian-theorem}.

Let $(M, \Omega_0)$ be a holomorphic symplectic manifold, $X \s M$ a complex Lagrangian submanifold, and $\iota : X \hookrightarrow M$ the inclusion map.
Let $\Omega_\zeta$ be a real-analytic family of holomorphic symplectic forms on $M$ depending holomorphically on $\zeta$ in a disc $D_r = \{\zeta \in \C : |\zeta| < r\}$ for some $r > 0$, where the complex structure determined by $\Omega_\zeta$ may also be varying.
More precisely, in holomorphic coordinates $(z_k = x_{2k - 1} + i x_{2k})_{k = 1}^{m}$ on $M$ (in the complex structure determined by $\Omega_0$), we have $\Omega_\zeta = \Omega_{ij}(x_1, \ldots, x_{2m}, \zeta) dx^i \wedge dx^j$, where $\Omega_{ij}$ are real-analytic complex-valued functions which are holomorphic in $\zeta$. Suppose also that there is a pseudo-K\"ahler form $\omega$ on $X$ such that $\iota^*\Omega_\zeta = 2i\zeta\omega$ for all $\zeta \in D_r$.

We will construct a pseudo-hyperk\"ahler structure $(\omega_I, \omega_J, \omega_K)$ on a neighbourhood of $X$ in $M$ such that $\Omega_0 = \omega_J + i\omega_K$ and $\iota^*\omega_I = \omega$, and which is positive definite if $\omega$ is.
It will be obtained by constructing the twistor space as in Proposition \ref{local-twistor-space}.

Let $Z = M \times D_r$ with the almost complex structure $(I_\zeta, i)$ at $(x, \zeta) \in M \times D_r$, where $I_\zeta$ is the complex structure corresponding to $\Omega_\zeta$.

\begin{lemma}
The almost complex structure on $Z = M \times D_r$ is integrable.
\end{lemma}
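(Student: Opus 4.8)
The plan is to verify integrability through the Newlander--Nirenberg theorem, i.e.\ to show that the bundle $T^{0,1}Z$ of $(0,1)$-vectors is involutive. At a point $(x, \zeta)$ this bundle is $T^{0,1}_{I_\zeta}M \oplus \C\,\partial_{\bar\zeta}$, and since $\Omega_\zeta$ is of type $(2,0)$ with respect to $I_\zeta$ we have $T^{0,1}_{I_\zeta}M = \ker\Omega_\zeta$, the radical of the complex-bilinear form $\Omega_\zeta$ on $T_\C M$. The first step is to repackage this geometrically: let $\widetilde\Omega$ denote the $2$-form on $Z$ with the same coefficients $\Omega_{ij}(x, \zeta)$ but with legs $dx^i \wedge dx^j$ regarded on $Z$, so that $\widetilde\Omega$ has no $d\zeta$ or $d\bar\zeta$ component. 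Contracting a vector annihilated by $d\zeta$ against $\widetilde\Omega$ reproduces the fibrewise contraction against $\Omega_\zeta$, so one checks directly that $T^{0,1}Z = \ker(d\zeta)\cap \mathrm{rad}(\widetilde\Omega)$, where $\mathrm{rad}$ denotes the radical.

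Next I would reduce involutivity to a statement about $d\widetilde\Omega$. For local sections $V, W$ of $T^{0,1}Z$ we have $i_V\widetilde\Omega = i_W\widetilde\Omega = 0$, and $d\zeta([V,W]) = V(d\zeta(W)) - W(d\zeta(V)) = 0$ since $d\zeta$ is closed and annihilates $V, W$; hence $[V,W]\in\ker(d\zeta)$ automatically. Using the identity $i_{[V,W]}\widetilde\Omega = \mathcal L_V i_W\widetilde\Omega - i_W\mathcal L_V\widetilde\Omega = -i_W i_V\, d\widetilde\Omega$, the remaining condition $[V,W]\in\mathrm{rad}(\widetilde\Omega)$, equivalently $[V,W]\in T^{0,1}Z$, becomes $i_W i_V\, d\widetilde\Omega = 0$, i.e.\ $d\widetilde\Omega$ must vanish whenever two of its slots lie in $T^{0,1}Z$.

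I would then compute $d\widetilde\Omega$ explicitly. Writing $\widetilde\Omega = \Omega_{ij}\,dx^i\wedge dx^j$, the purely $M$-directional part of $d\widetilde\Omega$ is $d_M\Omega_\zeta$, which vanishes because each $\Omega_\zeta$ is holomorphic symplectic and hence closed; the $d\bar\zeta$-part vanishes because $\Omega_{ij}$ is holomorphic in $\zeta$. Thus $d\widetilde\Omega = d\zeta\wedge\dot\Omega_\zeta$, where $\dot\Omega_\zeta \coloneqq \partial_\zeta\Omega_\zeta$ is the fibrewise $2$-form obtained by differentiating the coefficients. For $V, W\in T^{0,1}Z$ this gives $i_W i_V\, d\widetilde\Omega = \dot\Omega_\zeta(V, W)\,d\zeta$, so everything comes down to showing that $\dot\Omega_\zeta$ vanishes on $\ker\Omega_\zeta\times\ker\Omega_\zeta$.

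This last point is where the real content lies, and I expect it to be the main obstacle. Since $\Omega_\zeta$ has constant rank and depends holomorphically on $\zeta$, its kernel is a holomorphic-in-$\zeta$ subbundle, so any $V_0, W_0\in\ker\Omega_{\zeta_0}$ extend to local sections $V_\zeta, W_\zeta$ of $\ker\Omega_\zeta$ depending holomorphically on $\zeta$. Then $\Omega_\zeta(V_\zeta, W_\zeta)\equiv 0$, and differentiating in $\zeta$ at $\zeta_0$ yields $\dot\Omega_{\zeta_0}(V_0, W_0) + \Omega_{\zeta_0}(\partial_\zeta V_\zeta, W_0) + \Omega_{\zeta_0}(V_0, \partial_\zeta W_\zeta) = 0$; the last two terms vanish because $V_0, W_0$ lie in the radical of $\Omega_{\zeta_0}$, leaving $\dot\Omega_{\zeta_0}(V_0, W_0) = 0$. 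As $\dot\Omega_\zeta$ has no $d\zeta$ or $d\bar\zeta$ legs, the $\partial_{\bar\zeta}$-components of $V, W$ do not contribute, so this suffices. Involutivity then follows, and the Newlander--Nirenberg theorem gives integrability. I would emphasize that it is precisely the holomorphic dependence on $\zeta$ together with the closedness of each $\Omega_\zeta$ that makes the two potentially obstructing pieces of $d\widetilde\Omega$ disappear.
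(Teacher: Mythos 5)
Your proposal is correct and takes essentially the same route as the paper: both reduce to involutivity of $T^{0,1}Z$ via Newlander--Nirenberg, view $\Omega$ as a $2$-form on $Z$ with only fibre legs, and observe that fibrewise closedness plus holomorphic dependence on $\zeta$ collapse $d\Omega$ to $d\zeta\wedge\partial_\zeta\Omega_\zeta$, so that only contractions with kernel vectors remain. The sole (cosmetic) difference is the final vanishing: you extend kernel vectors holomorphically in $\zeta$ and differentiate $\Omega_\zeta(V_\zeta,W_\zeta)=0$, while the paper encodes the same derivative-of-the-kernel-condition computation in the Cartan identity $i_X\mathcal{L}_{\partial_\zeta}\Omega=i_{[X,\partial_\zeta]}\Omega$ for sections $X$ of $\ker\Omega$.
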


\begin{proof}
We show that $T^{0, 1}Z$ is involutive, which implies the result by the Newlander--Nirenberg theorem \cite{newlander-nirenberg}.
By viewing $\Omega$ as a $2$-form on $Z$, sections of $T^{0, 1}Z$ are spanned by sections of the form $X : Z \to T_\C M$ such that $i_X \Omega = 0$ and by $\d_{\bar{\zeta}} \coloneqq \tfrac{\d}{\d \overline{\zeta}}$.
Note that, as a form on $Z$, $\Omega$ is not closed, but
\begin{equation}\label{hfzgzaol}
d \Omega = \tfrac{\d \Omega_{ij}}{\d \zeta} dx^i \wedge dx^j \wedge d\zeta = (\L_{\d_\zeta} \Omega) \wedge d\zeta,
\end{equation}
where we used that $\Omega$ depends holomorphically on $\zeta$.
In particular, if $X, Y : Z \to T_\C M$ are such that $i_X \Omega = i_Y \Omega = 0$, then $i_X d\Omega = (i_X\L_{\d_\zeta} \Omega) \wedge d\zeta = (i_{[X, \d_\zeta]} \Omega) \wedge d\zeta$ so $i_X i_Y d\Omega = (i_{[X, \d_\zeta]} i_Y \Omega) \wedge d\zeta = 0$.
Hence, $i_{[X, Y]} \Omega = (\L_X i_Y - i_Y \L_X) \Omega = i_X i_Y d\Omega = 0$, so $[X, Y]$ is a section of $T^{0, 1}Z$.
Moreover, $i_{[X, \d_{\bar{\zeta}}]} \Omega = (\L_X i_{\d_{\bar{\zeta}}} - i_{\d_{\bar{\zeta}}} \L_X) \Omega = i_X i_{\d_{\bar{\zeta}}} d\Omega = 0$ by \eqref{hfzgzaol}, so $T^{0, 1}Z$ is involutive.
\end{proof}

We then have a complex manifold $Z$ together with a surjective holomorphic submersion $\pi : Z \to D_r$.
Moreover, the family of holomorphic symplectic forms $\Omega_\zeta$ can be viewed as a holomorphic section of $\Exterior^2(\ker d\pi)^*$.
It remains to find the real structure $\tau$ as in Proposition \ref{local-twistor-space}(c).

Recall that a \defn{totally real submanifold} of a complex manifold $(N, I)$ is a smooth submanifold $Y \s N$ such that $TY \cap ITY = 0$.
If $Y$ is further real-analytic and $\dim_\R Y = \dim_\C N$, then the inclusion $Y \s N$ is locally isomorphic to $\R^n \s \C^n$, so real-analytic functions on $Y$ can be analytically continued to holomorphic functions on a neighbourhood of $Y$.
In particular, a real-analytic diffeomorphism $Y \to Y$ can be extended to a biholomorphism on a neighbourhood of $Y$; see e.g.\ \cite[Lemma 5.40]{cieliebak-eliashberg}.

\begin{lemma}\label{0bxcijgm}
Let $(N, I, \Omega)$ be a holomorphic-symplectic manifold
and let $\iota : Y \hookrightarrow N$ be a smooth submanifold
such that $\iota^*\Omega$ is non-degenerate (as a section of $\Exterior^2 T^*Y \otimes \C$). Then, $Y$ is
totally real in $N$.
\end{lemma}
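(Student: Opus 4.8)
The plan is to argue pointwise and reduce the statement to a one-line computation in linear algebra at each $p \in Y$. Recall that $Y$ being totally real means $T_pY \cap I(T_pY) = 0$ for every $p$, so I would fix $p$ and show that a nonzero vector $v \in T_pY \cap I(T_pY)$ forces $\iota^*\Omega$ to be degenerate at $p$, contradicting the hypothesis. Here I read the non-degeneracy of $\iota^*\Omega$, an element of $\Exterior^2 T^*_pY \otimes \C = \Exterior^2(T_pY \otimes \C)^*$, in the only sensible way: its $\C$-bilinear extension to the complexified tangent space $T_pY \otimes \C$ has trivial kernel.

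The single structural input needed is that $\Omega$, being a holomorphic symplectic form for $I$, is of type $(2,0)$; equivalently, as already recorded in the introduction, $T^{0,1}N = \ker \Omega$, that is, $i_Z \Omega = 0$ for every $(0,1)$-vector $Z$. Thus the whole proof rests on exhibiting, whenever $Y$ fails to be totally real at $p$, a nonzero $(0,1)$-vector that also lies in $T_pY \otimes \C$: such a vector is automatically in the kernel of $\iota^*\Omega$.

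Concretely, suppose $0 \neq v \in T_pY \cap I(T_pY)$. Then $v = Iw$ for some $w \in T_pY$, so $Iv = -w \in T_pY$ as well, and hence both $v$ and $Iv$ lie in $T_pY$. Set $\xi := v + i\,Iv \in T_pY \otimes \C$. First, $\xi \neq 0$ since $v$ and $Iv$ are real and $v \neq 0$. Second, the $\C$-linear extension of $I$ gives $I\xi = Iv - i v = -i\xi$, so $\xi$ is a $(0,1)$-vector, i.e. $\xi \in T^{0,1}_pN = \ker \Omega$. Therefore $i_\xi \Omega = 0$, and restricting along $T_pY \otimes \C \subseteq T_pN \otimes \C$ yields $i_\xi(\iota^*\Omega) = 0$. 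Thus $\xi$ is a nonzero element of the kernel of $\iota^*\Omega$ at $p$, contradicting non-degeneracy. Hence $T_pY \cap I(T_pY) = 0$ for all $p$, which is exactly the assertion that $Y$ is totally real.

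I do not expect any genuine obstacle: the conceptual content is simply that $(T_pY \otimes \C) \cap T^{0,1}_pN$ is nonzero precisely when $T_pY \cap I(T_pY) \neq 0$, and that this intersection sits inside $\ker \Omega$ by the type-$(2,0)$ property. The only points requiring care are the bookkeeping—fixing the sign so that $v + i\,Iv$ (and not $v - i\,Iv$) is the $(0,1)$-vector under the convention $T^{0,1} = \ker \Omega$—and making explicit that ``non-degenerate'' refers to the $\C$-bilinear extension of $\iota^*\Omega$ to $T_pY \otimes \C$.
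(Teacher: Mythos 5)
Your proof is correct and is essentially the paper's argument: both proceed pointwise and exhibit, from a failure of total realness, the nonzero vector $v + i\,Iv \in T_pY \otimes \C$ lying in the kernel of $\iota^*\Omega$, contradicting non-degeneracy. The only difference is how the kernel membership is checked: the paper computes $\Omega(v + i\,Iv) = 0$ directly from the real and imaginary parts of $\Omega$ via $I = \eta^{-1}\omega$, whereas you invoke the equivalent fact $T^{0,1}N = \ker \Omega$ already recorded in the paper's introduction.
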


\begin{proof}
Write $\Omega = \omega + i \eta$, where $\omega$ and $\eta$ are the real and imaginary parts of $\Omega$, so that $I = \eta^{-1}\omega$.  
Let $u \in TY$ be such that $v \coloneqq Iu \in TY$. 
Then, $\omega(u) = \eta(v)$. 
Also, $Iv = -u$ so $\omega(v) = -\eta(u)$. 
Hence, $\Omega(u + iv) = \Omega(u) + i \Omega(v) = \omega(u) + i\eta(u) + i(\omega(v) + i \eta(v)) = (\omega(u) - \eta(v)) + i(\eta(u) + \omega(v)) = 0$. 
So we have $u + i v \in TY \otimes \C$ and $(\iota^*\Omega)(u + iv) = 0$. 
Since $\iota^*\Omega$ is non-degenerate, we get $u + iv = 0$.
\end{proof}

In particular, $\iota^* \Omega_\zeta = 2i \zeta \omega$ is non-degenerate for $\zeta \neq 0$, so $X$ is totally real in $M$ with respect to $I_\zeta$, and is also real-analytic by the real-analyticity of $\Omega_\zeta$.

Similarly, for all $0 < \e < r$, the circle $S^1_\e$ of radius $\e$ is totally real in $\C$, so $X \times S^1_\e$ is totally real and real-analytic in $Z$.

Now, consider the map
\[
\tau : X \times S^1_\e \too X \times S^1_\e, \quad \tau(x,
\zeta) = (x, \rho_\e(\zeta)),
\]
where $\rho_\e(\zeta) = -\e^2/\bar{\zeta}$, as defined in \eqref{antipodal}.
Then, $\tau$ is a real-analytic involution on a totally real and real-analytic submanifold, so it has a unique extension to an anti-holomorphic involution $\tau : U \to U$ covering $\rho_\e$, for some neighbourhood $U$ of $X \times S^1_\e$ in $Z$.
By shrinking $U$, if necessary, we may assume that $\tau$ extends continuously to the closure $\bar{U} \s Z$.
In components, we have
\[
\tau(x, \zeta) = (\tau_\zeta(x), \rho_\e(\zeta)), \quad \text{for all }(x, \zeta) \in U,
\]
where $\tau_\zeta$ is a family of diffeomorphisms on neighbourhoods of $X$ in $M$.

\begin{lemma}
We have $\tau^*\bar{\Omega} = \e^2 \Omega / \pi^2$, where $\pi : Z \to \C$ is the projection map.
\end{lemma}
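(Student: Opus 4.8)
The plan is to exploit that both $\tau^*\bar\Omega$ and $\e^2\Omega/\pi^2$ are \emph{holomorphic} sections of $\Exterior^2(\ker d\pi)^*$ on the locus $\pi \neq 0$, and then to invoke analytic continuation from the real-analytic totally real submanifold $X \times S^1_\e$. That $\e^2\Omega/\pi^2$ is holomorphic is clear, since $\Omega$ is and $\pi$ is a nowhere-zero holomorphic function on a neighbourhood of $X\times S^1_\e$; that $\tau^*\bar\Omega$ is holomorphic follows because $\tau$ is anti-holomorphic and covers $\rho_\e$ (so $d\tau$ carries $\ker d\pi$ into $\ker d\pi$), whence $\tau^*\bar\Omega$ is the pullback of an anti-holomorphic section by an anti-holomorphic map. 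Both sections are moreover of type $(2,0)$ along the fibres with respect to $I_\zeta$: this is automatic for $\e^2\Omega_\zeta/\zeta^2$, and holds for $\tau_\zeta^*\bar\Omega_{\rho_\e(\zeta)}$ because $\tau_\zeta \colon (M, I_\zeta) \to (M, I_{\rho_\e(\zeta)})$ is anti-holomorphic and the pullback of a $(0,2)$-form by an anti-holomorphic map is of type $(2,0)$.

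Next I would verify the identity along $X\times S^1_\e$. On this circle $\rho_\e(\zeta) = -\e^2/\bar\zeta = -\zeta$, and $\tau_\zeta$ restricts to the identity on $X$, so $\tau\circ\iota = \iota$ there. Restricting the fibrewise $2$-forms to $TX$, I get on the one hand $\iota^*\tau^*\bar\Omega = \iota^*\bar\Omega_{-\zeta} = \overline{\iota^*\Omega_{-\zeta}} = \overline{-2i\zeta\omega} = 2i\bar\zeta\,\omega$, using $\iota^*\Omega_\zeta = 2i\zeta\omega$ and that $\omega$ is real; on the other hand $\iota^*(\e^2\Omega_\zeta/\zeta^2) = (\e^2/\zeta^2)\,2i\zeta\omega = 2i(\e^2/\zeta)\omega = 2i\bar\zeta\,\omega$, since $\e^2 = |\zeta|^2 = \zeta\bar\zeta$ on $S^1_\e$. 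Thus the two sections have equal $\iota$-pullback along $X\times S^1_\e$.

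The key point is then to upgrade this agreement of pullbacks to equality of the full fibrewise $2$-forms at every point of $X\times S^1_\e$. Here I would use that $X$ is totally real in $(M, I_\zeta)$ for $\zeta\neq 0$ (Lemma \ref{0bxcijgm}), so that $T_xM = T_xX \oplus I_\zeta T_xX$ and hence $(\ker d\pi)_{(x,\zeta)} \cong T_xX\otimes_\R\C$; any complex-bilinear, that is $(2,0)$, form on a complexification is the complexification of its restriction to the real form, hence is determined by its $\iota$-pullback. Since both sections are of type $(2,0)$, the computation above forces them to coincide at every point of $X\times S^1_\e$. Finally, $X\times S^1_\e$ is a real-analytic totally real submanifold of $Z$ of maximal dimension ($\dim_\R = \dim_\C Z$), so two holomorphic sections of a holomorphic vector bundle agreeing along it agree on a whole neighbourhood; after shrinking $U$ we conclude $\tau^*\bar\Omega = \e^2\Omega/\pi^2$ on $U$. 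The main obstacle is precisely this upgrade step: the direct computation only controls the $TX$-directions, and it is the $(2,0)$-type together with the total reality of $X$ that recovers the normal directions without having to compute $d\tau$ away from $X$.
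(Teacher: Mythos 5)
Your proof is correct, and its core is the same as the paper's: identify both sides as holomorphic objects, compute their pullbacks to $X$ using $\iota^*\Omega_\zeta = 2i\zeta\omega$ and the reality of $\omega$, and finish by analytic continuation from a real-analytic, maximal totally real submanifold. The difference lies in where the continuation takes place. The paper fixes $\zeta \in \pi(U)$ and compares $\tau_\zeta^*\bar{\Omega}_{\rho_\e(\zeta)}$ with $\e^2\Omega_\zeta/\zeta^2$ as $I_\zeta$-holomorphic $2$-forms on a neighbourhood of $X$ in $M$, continuing from $X$ inside $(M, I_\zeta)$; this uses $\tau_\zeta \circ \iota = \iota$ for \emph{every} $\zeta \in \pi(U)$, which is not definitional off the circle (it follows from the same kind of continuation argument the paper spells out later when proving that the sections $s_x$ are real). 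You instead verify pointwise equality only over $S^1_\e$ --- where $\tau(x,\zeta) = (x,\rho_\e(\zeta))$ holds by the very definition of $\tau$ --- and then continue in the total space from the maximal totally real submanifold $X \times S^1_\e$ of $Z$. The price is your extra linear-algebra step upgrading equality of $\iota$-pullbacks to equality of the fibrewise $(2,0)$-forms via $T_xM = T_xX \oplus I_\zeta T_xX$; the paper's fibrewise formulation absorbs exactly this step into ``it suffices to check the equality after pulling back by $\iota$''. What your route buys is self-containedness: you never need to know $\tau_\zeta|_X = \mathrm{id}$ away from the circle. The mild cost is that you conclude the identity only after shrinking to a (possibly smaller, $\tau$-invariant) neighbourhood of $X \times S^1_\e$, which is harmless since Proposition \ref{local-twistor-space} only requires some such $U$.
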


\begin{proof}
We need to show that $\tau_\zeta^*\bar{\Omega}_{\rho_\e(\zeta)} = \e^2 \Omega_\zeta / \zeta^2$ for all $\zeta \in \pi(U)$.
By definition, $\tau_\zeta$ is holomorphic from $I_\zeta$ to $-I_{\rho_\e(\zeta)}$, so $\tau_\zeta^*\bar{\Omega}_{\rho_\e(\zeta)}$ is holomorphic with respect to $I_\zeta$.
Note that the right-hand side $\e^2 \Omega_\zeta / \zeta^2$ is also $I_\zeta$-holomorphic by definition.
Since $X$ is totally real and real-analytic with respect to $I_\zeta$, it suffices to check the equality after pulling back by $\iota : X \hookrightarrow M$.
But $\tau_\zeta \circ \iota = \iota$ and $\omega$ is real, so
\[
\iota^*(\tau_\zeta^*\bar{\Omega}_{\rho_\e(\zeta)}) = \iota^*\bar{\Omega}_{\rho_\e(\zeta)} = \overline{2i \rho_\e(\zeta) \omega} = 2i \tfrac{\e^2}{\zeta} \omega = \iota^*(\e^2 \Omega_\zeta / \zeta^2),
\]
and hence $\tau_\zeta^*\bar{\Omega}_{\rho_\e(\zeta)} = \e^2 \Omega_\zeta / \zeta^2$.
\end{proof}

By Proposition \ref{local-twistor-space}, we have a twistor space $\mathcal{Z}$ whose zero fibre is $M$, and, by Corollary \ref{family-of-twistor-lines}, it suffices to show that every $x \in X$ is contained in a real twistor line $s_x$.
Let $x\in X$, let $\delta > 0$ be small enough so that $\{x\} \times A_{\e, \delta} \s U$ (where $A_{\e, \delta}$ is the annulus defined in \S\ref{gluing-construction}), and let
\[
s_x: D_{\e + \delta} \too U \cup \pi^{-1}(D_\e),
\quad s_x(\zeta) = (x, \zeta).
\]
We have $s_x(\zeta) = \tau(s_x(\rho_\e(\zeta)))$ for all $\zeta \in S^1_\e$, and since both sides are holomorphic and $S^1_\e$ is totally real, this holds for all $\zeta$. 
Hence, $s_x$ defines a real holomorphic section of $\mathcal{Z}$ by letting $s_\pm = s_x$ as in Proposition \ref{local-twistor-space}.

\begin{lemma}
The section $s_x$ is a real twistor line, i.e.\ it has normal bundle $N \cong \O(1)^{\oplus 2n}$, where $\dim_\C M = 2n$.
\end{lemma}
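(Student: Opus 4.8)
My plan is to pin down the holomorphic isomorphism type of the normal bundle $N$ by combining Grothendieck's splitting with the $\O(2)$-valued symplectic pairing and the reality structure, and to feed the hypothesis $\iota^*\Omega_\zeta = 2i\zeta\omega$ in at the decisive step. Here $\dim_\C M = 2n$ and, since $X$ is Lagrangian, $\dim_\C X = n$. By Lemma \ref{zgd97fbr}, $N$ has rank $2n$ and degree $2n$, so by Grothendieck's theorem $N \cong \bigoplus_i \O(k_i)$ with $\sum_i k_i = 2n$; it then remains to show every $k_i$ equals $1$, equivalently (via Lemma \ref{yaed55v1}) to exhibit $2n$ global holomorphic sections, each with a zero, that are linearly independent in some fibre.

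First I would produce half of the sections from $X$ itself. Since $X$ is a complex submanifold and $s_y(\zeta) = (y, \zeta)$ depends holomorphically on $y$, differentiating the family $\{s_y\}_{y \in X}$ at $x$ gives, for each $v \in T_xX$, a global holomorphic section $\sigma_v$ of $N$. Because $\tau_\zeta$ fixes $X$ pointwise, $d\tau_\zeta|_{T_xX} = \mathrm{id}$, so in both gluing charts of Proposition \ref{local-twistor-space} the section $\sigma_v$ is the constant vector $v$; hence the $\sigma_v$ span a \emph{trivial} rank-$n$ subbundle $L \cong \O^{\oplus n} \subset N$, whose sections have no zeros. The crucial computation is that, under the symplectic form $\Omega \in H^0(\Exterior^2 N^* \otimes \O(2))$, the hypothesis $\iota^*\Omega_\zeta = 2i\zeta\omega$ forces $\Omega(\sigma_v, \sigma_{v'}) = 2i\zeta\,\omega(v, v')$, so that $L$ is isotropic exactly at $\zeta = 0$ and symplectic elsewhere.

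Next I would exploit self-duality and reality. The form $\Omega$ induces an isomorphism $N \cong N^* \otimes \O(2)$, so the multiset $\{k_i\}$ is symmetric about $1$, and the real structure coming from $\tau^*\bar\Omega = \e^2 \Omega / \pi^2$ constrains it further. Dualizing the inclusion $L \hookrightarrow N$ through $\Omega$ yields an exact sequence $0 \to L^\perp \to N \to L^* \otimes \O(2) \to 0$ with $L^* \otimes \O(2) \cong \O(2)^{\oplus n}$, so $\deg L^\perp = 0$, and the composite $L \to L^* \otimes \O(2)$ is multiplication by $2i\zeta\omega$. I would then lift sections of the quotient $\O(2)^{\oplus n}$ that vanish at $\zeta = 0$ to obtain the complementary $n$ sections of $N$, each with a zero, and use nondegeneracy of $\omega$ to check that together with the $\sigma_v$ (after a suitable correction) they are independent at a generic fibre; feeding these $2n$ sections into Lemma \ref{yaed55v1} gives $N \cong \O(1)^{\oplus 2n}$.

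The main obstacle is the balancing step: ruling out summands $\O(k)$ with $k \neq 1$. Everything reduces to the fact that the degeneration of $L$ recorded by $\Omega(\sigma_v, \sigma_{v'}) = 2i\zeta\,\omega(v, v')$ is a \emph{simple, uniform} zero at $\zeta = 0$ — the pairing drops rank maximally, and to exactly first order in every direction, precisely because $\omega$ is nondegenerate. This uniformity is what prevents the splitting type from becoming unbalanced (for instance an $\O(2) \oplus \O(0)$ pair), and it is exactly where the pseudo-K\"ahler hypothesis on $\omega$ is indispensable. I expect the technical heart of the argument to be showing that this first-order degeneration forces $k_i = 1$ for all $i$ — equivalently, that the extension class of $0 \to L^\perp \to N \to \O(2)^{\oplus n} \to 0$ is nondegenerate.
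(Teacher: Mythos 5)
Your proposal has a genuine gap, and it sits exactly at the step you defer to the end. The first problem is that your bundle $L$ is not what you claim. The map $v \mto \sigma_v$ is only \emph{real}-linear, and for $\zeta \neq 0$ the fibrewise complex span of $\{\sigma_v(\zeta) : v \in T_xX\} = T_xX$ inside $(T_xM, I_\zeta)$ is $T_xX + I_\zeta T_xX$, which has complex dimension $2n$, not $n$, precisely because $X$ is totally real for $I_\zeta$ (Lemma \ref{0bxcijgm} applied to $\iota^*\Omega_\zeta = 2i\zeta\omega$); the rank drops to $n$ only at $\zeta \in \{0, \infty\}$. So ``the span of the $\sigma_v$'' is not a rank-$n$ subbundle at all. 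If instead you define $L$ by choosing an $I_0$-complex basis $v_1, \ldots, v_n$ of $T_xX$, you do get a trivial subbundle, but then the pairing $\Omega(\sigma_{v_i}, \sigma_{v_j}) = 2i\zeta\,\omega(v_i, v_j)$ can vanish identically (choose the $v_i$ to span an $\omega$-Lagrangian; for $n = 1$ this is forced since $\omega(v_1,v_1)=0$), so the ``simple, uniform first-order degeneration'' on which your balancing argument rests need not exist.

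The second, more fundamental problem is that the package you assemble provably does not determine the splitting type, so no amount of work can close your final step from that data alone. Take $n = 1$ and $N = \O \oplus \O(2)$: it has degree $2$, a fibrewise nondegenerate $\O(2)$-valued pairing (the determinant), self-dual splitting type $\{0,2\}$ symmetric about $1$, a nowhere-vanishing trivial subbundle $L = \O \oplus 0$ which is isotropic (so the composite $L \to L^*\otimes\O(2)$ agrees with ``multiplication by $2i\zeta\omega$'', both being zero), a real structure covering the antipodal map, and unobstructed lifts of sections of the quotient $\O(2)$ vanishing at $0$. Yet $N \not\cong \O(1)^{\oplus 2}$, and no ``suitable correction'' of $(1,0)$ exists: every section of $\O\oplus\O(2)$ having a zero lies in $0 \oplus \O(2)$, so one can never produce two independent sections each with a zero. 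What rules this out in the actual geometric situation — and what is the real content of the paper's proof — is a supply of sections \emph{with} zeros coming from the failure of $v \mto \sigma_v$ to be complex-linear: the paper takes $\alpha_i^\pm(\zeta) = I_0 v_i \pm I_\zeta v_i$ (that is, $\sigma_{I_0 v_i} \pm i\,\sigma_{v_i}$), checks via $d\tau_\zeta \circ I_\zeta = -I_{\rho_\e(\zeta)} \circ d\tau_\zeta$ that $\alpha_i^+$ and $\alpha_i^-$ glue to each other and hence define global sections, observes that $\alpha_i^-$ vanishes at $0$ and $\alpha_i^+$ at $\infty$, and uses total reality of $X$ for $I_\zeta$, $\zeta \neq 0$, to get independence away from $\{0,\infty\}$; Lemma \ref{yaed55v1} then gives $N \cong \O(1)^{\oplus 2n}$ directly, with no need for Grothendieck splitting symmetry, $L^\perp$, or extension classes. (Your lifting step is also obstructed in general: it needs $H^1(L^\perp) = 0$, which you cannot assert without already knowing the splitting type of $L^\perp$.)
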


\begin{proof}
By Lemma \ref{zgd97fbr}, $N$ has degree $2n$, so we can use the criterion in Lemma \ref{yaed55v1}.
Let $v_1, \ldots, v_n$ be a basis of $T_xX$ and define sections $\alpha_i^\pm : D_{\e + \delta} \to \ker d\pi$ for $i = 1, \ldots, n$, by
\[
\alpha_i^+(\zeta) = I_0 v_i + I_\zeta v_i, \quad \text{and} \quad \alpha_i^-(\zeta) = I_0 v_i - I_\zeta v_i.
\]
The sections $\alpha_i^\pm$ are holomorphic since $\zeta \mto (v_i, 0) \in T_xM \times T_\zeta\C$ is holomorphic as map to $TZ$ and hence so is $\zeta \mto I(v_i, 0) = (I_\zeta v_i, 0)$, where $I$ is the complex structure on $Z$.

Since $X$ is a complex submanifold and $\tau_\zeta|_X = \mathrm{id}_X$, we have $d\tau_\zeta(I_0v_i) = I_0v_i$.
Moreover, $d\tau_\zeta \circ I_\zeta = -I_{\rho_\e(\zeta)} \circ d\tau_\zeta$ by the definition of $\tau$, so we get $d\tau_\zeta(\alpha_i^+(\zeta)) = d\tau_\zeta(I_0v_i + I_\zeta v_i) = I_0v_i - I_{\rho_\e(\zeta)} v_i = \alpha_i^-(\rho_\e(\zeta))$.
Similarly, $d\tau_\zeta(\alpha_i^-(\zeta)) = \alpha_i^+(\rho_\e(\zeta))$.
Hence, both $\alpha_i^+$ and $\alpha_i^-$ extend to a global section of $N = s_x^*\ker d\pi$, so we have $2n$ sections $\alpha_1^\pm, \ldots, \alpha_n^\pm$.
Moreover, since for all $\zeta \ne 0$, $X$ is totally real with respect to $I_\zeta$, i.e.\ $TX \cap I_\zeta TX = 0$, the sections are linearly independent at all points $\zeta \in \CP^1 \setminus \{0, \infty\}$.
Also, $\alpha_i^+(\infty) = \alpha_i^-(0) = 0$, so $N \cong  \O(1)^{\oplus 2n}$ by Lemma \ref{yaed55v1}.
\end{proof}

By Corollary \ref{family-of-twistor-lines}, there is a pseudo-hyperk\"ahler structure $(\omega_I, \omega_J, \omega_K)$ on a neighbourhood of $X$ in $M$ such that
\begin{equation}\label{g6s4bdt8}
\mathrm{ev}_\zeta^*\Omega_\zeta =
\mathrm{ev}_0^*(
(\omega_J + i\omega_K) + 2i \zeta \omega_I +
\zeta^2 (\omega_J - i \omega_K)),
\end{equation}
for all $\zeta \in D_r$, where $\mathrm{ev}_\zeta : \M \to M$ is the evaluation map on the space $\M$ of real twistor lines, after identifying $M$ with  $Z_\zeta = M \times \{\zeta\}$.

\begin{lemma}
We have $\Omega_0 = \omega_J + i \omega_K$ and $\iota^*\omega_I = \omega$.
\end{lemma}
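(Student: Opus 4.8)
The plan is to read off both identities from \eqref{g6s4bdt8} by specialising the parameter $\zeta$ and by restricting to $X$. For the first identity I would set $\zeta = 0$ in \eqref{g6s4bdt8}, which collapses the right-hand side to its constant term and gives $\mathrm{ev}_0^*\Omega_0 = \mathrm{ev}_0^*(\omega_J + i\omega_K)$ on the neighbourhood of $s(X)$ in $\M$. By Corollary \ref{family-of-twistor-lines}, $\mathrm{ev}_0$ is a diffeomorphism from this neighbourhood onto a neighbourhood of $X$ in $Z_0 = M$, so $\mathrm{ev}_0^*$ is injective on $2$-forms and we conclude $\Omega_0 = \omega_J + i\omega_K$ there.

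For the second identity, the key point is that the twistor lines over $X$ are the constant vertical sections. Writing $s : X \to \M$, $x \mapsto s_x$, the formula $s_x(\zeta) = (x, \zeta)$ together with the identification $Z_\zeta \cong M$ gives $\mathrm{ev}_\zeta \circ s = \iota$ for every $\zeta$. I would then pull back \eqref{g6s4bdt8} along $s$: the left-hand side becomes $(\mathrm{ev}_\zeta \circ s)^*\Omega_\zeta = \iota^*\Omega_\zeta = 2i\zeta\omega$ by hypothesis, while on the right $(\mathrm{ev}_0 \circ s)^* = \iota^*$ produces
\[
2i\zeta\,\omega = \iota^*(\omega_J + i\omega_K) + 2i\zeta\,\iota^*\omega_I + \zeta^2\,\iota^*(\omega_J - i\omega_K).
\]
Since $\omega_I, \omega_J, \omega_K$ do not depend on $\zeta$, both sides are polynomials in $\zeta$ with form-valued coefficients, valid for all $\zeta \in D_r$, so I would match coefficients. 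The linear term gives $2i\omega = 2i\,\iota^*\omega_I$, and as $\omega$ and $\iota^*\omega_I$ are both real this yields $\iota^*\omega_I = \omega$; the constant and quadratic terms merely reconfirm $\iota^*\Omega_0 = 0 = \iota^*\bar{\Omega}_0$, i.e.\ that $X$ is Lagrangian.

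The only step carrying genuine content is the identity $\mathrm{ev}_\zeta \circ s = \iota$, which records that each $x \in X$ lies on the vertical real twistor line $\zeta \mapsto (x, \zeta)$ produced in the previous lemma. Once this is in hand, both conclusions follow by elementary evaluation and by comparing coefficients of a holomorphic family, with no further analytic input; I therefore expect no real obstacle beyond keeping the two identifications $\mathrm{ev}_\zeta \circ s = \iota$ and $\mathrm{ev}_0 \circ s = \iota$ straight.
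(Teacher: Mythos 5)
Your proposal is correct and follows essentially the same route as the paper: evaluate \eqref{g6s4bdt8} at $\zeta = 0$ for the first identity, then use $\mathrm{ev}_\zeta \circ s = \iota$ to pull back \eqref{g6s4bdt8} along $s$ for the second. The only cosmetic difference is that you extract $\iota^*\omega_I = \omega$ by matching coefficients of the polynomial identity in $\zeta$, whereas the paper kills the constant and quadratic terms directly via the Lagrangian condition $\iota^*\Omega_0 = 0$; both are valid (and your reality remark is unnecessary, since the linear coefficients already give $2i\omega = 2i\,\iota^*\omega_I$).
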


\begin{proof}
The first part follows from evaluating \eqref{g6s4bdt8} at $\zeta = 0$.
Now, let $s : X \to \M$ be the map $x \mto s_x$. 
By the definition of $s_x$, we have $\mathrm{ev}_\zeta \circ s = \iota$ for all $\zeta$, so
\begin{align*}
2i\zeta \omega &= \iota^*\Omega_\zeta = s^*\mathrm{ev}_\zeta^*\Omega_\zeta \\
&= s^*\mathrm{ev}_0^*( (\omega_J + i\omega_K) + 2i \zeta \omega_I + \zeta^2 (\omega_J - i \omega_K)) \\
&= \iota^*( (\omega_J + i\omega_K) + 2i \zeta \omega_I + \zeta^2 (\omega_J - i \omega_K)) \\
&= 2i\zeta \iota^* \omega_I,
\end{align*}
since $X$ is Lagrangian with respect to $\Omega_0 = \omega_J + i \omega_K$.
Hence, $\omega = \iota^*\omega_I$.
\end{proof}

\begin{lemma}\label{signature-lemma}
If the pseudo-K\"ahler form $\omega$ has signature $(p, q)$ then the pseudo-hyperk\"ahler metric has signature $(2p, 2q)$.
\end{lemma}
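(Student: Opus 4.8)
The plan is to compute the signature of the pseudo-hyperk\"ahler metric $g$ pointwise along $X$. Since the signature of a nondegenerate symmetric form is locally constant, and the neighbourhood of $X$ may be taken connected (say a tubular neighbourhood), it suffices to evaluate it at a point $x \in X$. The key structural fact I would establish is that $T_x M$ splits $g$-orthogonally as $T_x X \oplus J T_x X$, with $J$ restricting to an isometry between the two summands; the signature of $g$ at $x$ is then twice that of $g|_{T_x X}$, and I would identify the latter with the signature $(p, q)$ of $\omega$.

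First I would record the consequences of $X$ being complex Lagrangian. Since $\Omega_0 = \omega_J + i\omega_K$ is of type $(2,0)$ and vanishes on $T^{1,0}X$, and $T_x X$ is $I$-invariant, the pullback $\iota^*\Omega_0$ vanishes as a complex-valued $2$-form on $T_x X$; taking real and imaginary parts gives $\iota^*\omega_J = \iota^*\omega_K = 0$. In particular $g(Jv, w) = \omega_J(v, w) = 0$ for all $v, w \in T_x X$, so $J T_x X$ is $g$-orthogonal to $T_x X$.

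Next I would show that $g|_{T_x X}$ is nondegenerate and compute its signature. Since $T_x X$ is $I$-invariant and $g$ is $I$-Hermitian, the triple $(T_x X, I, g|_{T_x X})$ is a pseudo-Hermitian space whose associated $2$-form is $g(I\,\cdot\,, \cdot)|_{T_x X} = \iota^*\omega_I = \omega$. As $\omega$ is a nondegenerate pseudo-K\"ahler form of signature $(p, q)$, the metric $g|_{T_x X}$ is exactly the pseudo-K\"ahler metric it determines, hence nondegenerate of signature $(p, q)$. Nondegeneracy forces $T_x X \cap (T_x X)^{\perp_g} = 0$; since $(T_x X)^{\perp_g}$ has real dimension $4n - 2n = 2n = \dim_\R X$ and contains $J T_x X$ (which also has dimension $2n$), I conclude $J T_x X = (T_x X)^{\perp_g}$ and obtain the $g$-orthogonal splitting $T_x M = T_x X \oplus J T_x X$.

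Finally, since $g$ is $J$-Hermitian we have $g(Jv, Jw) = g(v, w)$, so $J \colon T_x X \to J T_x X$ is a $g$-isometry and $g|_{J T_x X}$ also has signature $(p, q)$. Adding the signatures over the two orthogonal summands yields that $g$ has signature $(2p, 2q)$ at $x$, and hence on the whole neighbourhood. The only step requiring care is the nondegeneracy of $g|_{T_x X}$, which is what makes the orthogonal decomposition direct and lets the signatures add; this is precisely where the hypothesis that $\omega = \iota^*\omega_I$ is a genuine nondegenerate pseudo-K\"ahler form is essential.
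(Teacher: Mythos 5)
Your proof is correct, and its skeleton matches the paper's: reduce to a point $x \in X$ by local constancy of the signature, split $T_xM$ into $T_xX$ and its image under a complex structure, use invariance of $g$ to double the signature, and identify $\iota^*g$ with the pseudo-K\"ahler metric of $\omega$ via $\iota^*\omega_I = \omega$. The difference is in how the splitting is justified, and your version is in fact the more complete one. The paper takes $T_xM = T_xX \oplus I_\zeta T_xX$ for an arbitrary $\zeta \neq 0$, obtained from the totally-real property of $X$ with respect to $I_\zeta$, and passes directly from $I_\zeta$-invariance of $g$ to the conclusion. Read literally, this skips a point: a direct sum $V = W \oplus IW$ with $g$ an $I$-invariant metric and $g|_W$ of signature $(p,q)$ does not by itself force $g$ to have signature $(2p,2q)$; one also needs the summands to be $g$-orthogonal. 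For instance, in $\C^2$ with $g = \mathrm{Re}(z_1\bar{w}_1 - z_2\bar{w}_2)$ of signature $(2,2)$, the totally real plane spanned by $(1,0)$ and $(2i,1)$ is positive definite for $g$. In the paper's setting, orthogonality of $T_xX$ and $I_\zeta T_xX$ holds exactly when $I_\zeta$ lies in the circle spanned by $J$ and $K$ (i.e.\ $|\zeta| = 1$), and it holds there precisely because $X$ is Lagrangian for $\omega_J$ and $\omega_K$ --- which is the step you prove. Your argument fixes the complex structure $J$, derives $g(T_xX, JT_xX) = 0$ from $\iota^*\omega_J = \iota^*\omega_K = 0$, and obtains the direct sum from nondegeneracy of $g|_{T_xX}$ plus a dimension count, rather than from total-realness. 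What the paper's route buys is brevity and a decomposition available for every $\zeta \neq 0$; what yours buys is a self-contained justification of why the signatures actually add, and it is exactly the observation one would insert to make the paper's proof airtight as written.
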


\begin{proof}
The signature of a non-degenerate symmetric tensor is locally constant, so it suffices to check that the pseudo-hyperk\"ahler metric $g$ has signature $(2p, 2q)$ at a point $x \in X$.
Since for $\zeta \ne 0$, $X$ is totally real with respect to $I_\zeta$, we have $T_xM = T_xX \oplus I_\zeta T_xX$. 
Moreover, $g(I_\zeta\cdot, I_\zeta\cdot) = g(\cdot, \cdot)$, so it suffices to show that $\iota^*g$ has signature $(p, q)$.
But $\iota^*\omega_I = \omega$, so $\iota^*g$ is the pseudo-K\"ahler metric of $\omega$.
\end{proof}

In particular, if $\omega$ is a K\"ahler form, then $g$ is positive definite, so this concludes the proof of Theorem \ref{lagrangian-theorem}.

\section{Hyperk\"ahler metrics on symplectic realizations}
\label{groupoid-theorem-proof}

\subsection{The general case}
The goal of this section is to discuss the existence of hyperk\"ahler structures on symplectic realizations (in the sense of Definition \ref{symplectic-realization}), and prove Theorem \ref{groupoid-theorem}.

Recall (see e.g.\ \cite{kodaira}) that a deformation of complex structures on a (not necessarily compact) complex manifold $X$ can be obtained by solving the Maurer--Cartan equation
\begin{equation}\label{maurer-cartan}
\db \phi + \tfrac{1}{2}[\phi, \phi] = 0
\end{equation}
for vector-valued $1$-forms $\phi \in \Omega_X^{0, 1}(T^{1, 0}X)$.
More precisely, we seek a family $\phi_\zeta = \sum_{n = 1}^\infty \phi_n \zeta^n$ of solutions to \eqref{maurer-cartan} depending holomorphically on $\zeta$ in a neighbourhood of $0$ in $\C$.
If $1 - \phi_\zeta \bar{\phi}_\zeta : T_\C X \to T_\C X$ is invertible (which always holds for $\zeta$ small enough if $X$ is compact since $\phi_0 = 0$), 
we get a new integrable almost complex structure $I_\zeta$ by defining the $(0, 1)$-part to be $T^{0, 1}_\zeta \coloneqq (1 + \phi_\zeta)(T^{0, 1}X)$.

Hitchin's main theorem in \cite{hitchin-def} is that the presence of a holomorphic Poisson structure $\sigma$ on $X$ gives solutions to the Maurer--Cartan equation for each closed $(1, 1)$-form.
More precisely, following Gualtieri's review \cite[\S5.2]{gualtieri-ham}, Hitchin's theorem can be decomposed into three parts.
The first part is that if $\omega \in \Omega_X^{1, 1}$ is a solution to
\begin{equation}\label{poisson-maurer-cartan}
d \omega + \tfrac{1}{2} \d i_\sigma(\omega \wedge \omega) = 0,
\end{equation}
then, the contraction $\phi \coloneqq -\sigma \omega$ is a solution to the Maurer--Cartan equation \eqref{maurer-cartan} (note that  \cite[Eq.\ 51]{gualtieri-ham} reduces to \eqref{poisson-maurer-cartan} for a $(1, 1)$-form $\omega$ by \cite[Eq.\ 53]{gualtieri-ham}).
The second part is that \eqref{poisson-maurer-cartan} admits plenty of solutions.
Namely, if $X$ is compact and $H^2(X, \C) \to H^2(X, \O_X$) is surjective (e.g.\ $X$ is K\"ahler), then, for any closed $(1, 1)$-form $\omega_1$, there exist $\omega_n \in \Omega_X^{1, 1}$, $n \ge 2$, such that $\omega(\zeta) = \sum_{n = 1}^\infty \omega_n \zeta^n$ converges to a family of solutions to \eqref{poisson-maurer-cartan} for $\zeta$ in a neighbourhood of $0$ in $\C$.
Hence, $\phi_\zeta \coloneqq -\sigma \omega(\zeta)$ defines a deformation of complex structures $I_\zeta$.
Finally, the third part is that $\sigma$ is deformed to a family of holomorphic Poisson structures $\sigma_\zeta$ on $(X, I_\zeta)$.

Now, we wish to lift those deformations to a symplectic realization
\[
s : (M, \Omega_0) \too (X, \sigma), \quad \iota : X \hooklongrightarrow M
\]
in the sense of Definition \ref{symplectic-realization}.
Recall \cite[III Theorem 1.2]{CDW} that $M$ has the structure of a symplectic local groupoid such that $s$ is the source map, possibly after restricting to a smaller neighbourhood of $X$ and $M$.
In particular, there is a \defn{target map}, i.e.\ an anti-Poisson holomorphic submersion $t : M \to X$ such that $\ker ds$ and $\ker dt$ are symplectically orthogonal and $t \circ \iota = \mathrm{id}_X$.
This is proved, for example, in \cite[III Lemme 1]{CDW} in the $C^\infty$ case, but the same argument works in the holomorphic setting.

The maps $s$ and $t$ can be used to produce a two-parameter family of holomorphic symplectic structures on $M$, as the following proposition shows (the author thanks Marco Gualtieri for explaining the main idea of this proposition).

\begin{proposition}[{cf.\ \cite[Proposition 6.3]{BG}}]
\label{deformation-lift}
Let $(X, \sigma)$ be a (not necessarily compact) holomorphic Poisson manifold, let $\omega(\zeta) = \sum_{n = 1}^\infty \omega_n \zeta^n$ be a family of solutions to \eqref{poisson-maurer-cartan} such that $1 - \phi_\zeta \bar{\phi}_\zeta$ is invertible for all $\zeta$ in a neighbourhood $U$ of $0$ in $\C$, where $\phi_\zeta \coloneqq -\sigma \omega(\zeta)$, and let 
\[
\beta(\zeta) \coloneqq \omega(\zeta) + \tfrac{1}{2} i_{\sigma}(\omega(\zeta) \wedge \omega(\zeta)).
\]
Let $s : (M, \Omega_0) \to (X, \sigma)$ be a symplectic realization with target map $t$.
Then, for all $\zeta_1, \zeta_2 \in U$,
\[
\Omega_{\zeta_1, \zeta_2} \coloneqq \Omega_0 +
s^*\beta(\zeta_1) - t^* \beta(\zeta_2)
\]
is a holomorphic symplectic form on $M$ with respect to some complex structure $I_{\zeta_1, \zeta_2}$.
\end{proposition}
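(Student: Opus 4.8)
The plan is to verify the two properties that make a closed complex $2$-form holomorphic symplectic for some complex structure, in the spirit of the observation from the introduction that $T^{0,1}M = \ker\Omega_0$ for a holomorphic symplectic form. Concretely, if $\Omega$ is a closed complex $2$-form on $M$ with $\ker\Omega \oplus \overline{\ker\Omega} = T_\C M$, then $T^{0,1} \coloneqq \ker\Omega$ is a corank-zero almost complex structure; it is integrable because the characteristic distribution of the closed form $\Omega$ is involutive (for $i_u\Omega = i_v\Omega = 0$ one has $i_{[u,v]}\Omega = \mathcal{L}_u(i_v\Omega) - i_v(\mathcal{L}_u\Omega) = 0$, using $i_v\Omega = 0$ and $\mathcal{L}_u\Omega = i_u\,d\Omega + d\,i_u\Omega = 0$); and with respect to this structure $\Omega$ has no $T^{0,1}$-slot, hence is of type $(2,0)$, closed and non-degenerate, so holomorphic symplectic. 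I would therefore reduce the proposition, writing $\Omega \coloneqq \Omega_{\zeta_1,\zeta_2}$, to proving (i) $d\Omega = 0$ and (ii) $\ker\Omega \oplus \overline{\ker\Omega} = T_\C M$ for all $\zeta_1,\zeta_2 \in U$.

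For closedness, since $d\Omega_0 = 0$ we have $d\Omega = s^*d\beta(\zeta_1) - t^*d\beta(\zeta_2)$, so it suffices that $d\beta(\zeta) = 0$ for $\zeta \in U$ (this is in fact necessary: taking $\zeta_2 = 0$, where $\beta(0) = 0$, and using injectivity of $s^*$ forces $d\beta(\zeta_1) = 0$). Decomposing $d\beta = d\omega + \tfrac12 d\,i_\sigma(\omega\wedge\omega)$ by type, the $(2,1)$- and $(1,2)$-parts together are exactly the left-hand side of \eqref{poisson-maurer-cartan} and so vanish, leaving only the $(0,3)$-part $\tfrac12\bar\partial\,i_\sigma(\omega\wedge\omega)$. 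To kill it I would use that $\sigma$ is holomorphic, so $\bar\partial$ commutes with the contraction $i_\sigma$, together with the pointwise identity $i_\sigma(\omega\wedge\omega) = -\sigma^{ab}(i_{\partial_a}\omega)\wedge(i_{\partial_b}\omega)$ valid for a $(1,1)$-form. The equation $\partial\omega = 0$ (the $(2,1)$-part of \eqref{poisson-maurer-cartan}) forces the tensor $i_{\partial_a}\partial(i_{\partial_c}\omega)$ to be symmetric in $a,c$, and a symmetry argument then contracts it against the antisymmetric $\sigma$ to zero; this gives $\bar\partial\,i_\sigma(\omega\wedge\omega) = 0$, hence $d\beta = 0$ and (i).

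The main work, and the expected obstacle, is (ii). I would exploit the symplectic-realization structure: by \cite[III Lemme 1]{CDW} the kernels $\ker ds$ and $\ker dt$ are symplectically orthogonal for $\Omega_0$, while $s^*\beta(\zeta_1)$ annihilates $\ker ds$ and $t^*\beta(\zeta_2)$ annihilates $\ker dt$, being pullbacks. I would then analyse $\ker\Omega$ as a deformation of $\ker\Omega_0 = T^{0,1}M$ under the closed $B$-field $B \coloneqq s^*\beta(\zeta_1) - t^*\beta(\zeta_2)$: solving $i_v(\Omega_0 + B) = 0$ exhibits $\ker\Omega$, wherever the relevant operator is invertible, as the graph of a bundle map $\psi : T^{0,1}M \to T^{1,0}M$, so that (ii) is equivalent to invertibility of $1 - \psi\bar\psi$. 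The crux is to identify $\psi$ with pullbacks of the base deformation $\phi_\zeta = -\sigma\,\omega(\zeta)$: since $s$ is a holomorphic Poisson submersion, the transform by $s^*\beta(\zeta_1)$ reproduces $\phi_{\zeta_1}$ in the directions on which $ds$ is an isomorphism, and since $t$ is anti-Poisson and enters with a sign, the transform by $-t^*\beta(\zeta_2)$ reproduces the conjugate of $\phi_{\zeta_2}$ in the $t$-transverse directions, while on the overlap $\ker ds \cap \ker dt$ both pullbacks vanish and the structure is undeformed. Tracking this through, I expect $1 - \psi\bar\psi$ to be invertible precisely when $1 - \phi_{\zeta_1}\bar\phi_{\zeta_1}$ and $1 - \phi_{\zeta_2}\bar\phi_{\zeta_2}$ are, which holds by hypothesis on $U$. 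The delicate points will be making the decoupling of the $s$- and $t$-contributions precise despite the overlap of their vertical distributions, and checking that Poisson-compatibility of $s$ and anti-compatibility of $t$ convert the abstract transform $\psi$ into exactly the pulled-back $\phi_\zeta$ and its conjugate. Finally, holomorphic dependence on $(\zeta_1,\zeta_2)$ and real-analyticity along $M$ are inherited from $\omega(\zeta)$ and hence $\beta(\zeta)$, and $\Omega_{0,0} = \Omega_0$ recovers the original structure, completing the verification.
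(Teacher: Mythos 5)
Your overall framework is the paper's: you reduce to (i) closedness and (ii) a kernel condition, with the same involutivity argument producing the complex structure from $T^{0,1}\coloneqq\ker\Omega$, and your reduction of (i) to the vanishing of the $(0,3)$-part $\tfrac12\db\, i_\sigma(\omega\wedge\omega)$ of $d\beta$ is also correct. But your argument for that vanishing is wrong: it never invokes the integrability of $\sigma$, which is indispensable at exactly this step. Since $\sigma$ is holomorphic, $\db\, i_\sigma(\omega\wedge\omega) = 2\, i_\sigma(\omega\wedge\db\omega)$, an expression in the \emph{anti}holomorphic derivatives of $\omega$; the symmetry of $i_{\partial_a}\d(i_{\partial_c}\omega)$ in $a,c$ that you extract from $\d\omega=0$ concerns holomorphic derivatives and says nothing about it, and antisymmetry of $\sigma$ alone cannot kill it. The correct mechanism, which is the paper's proof, is: by the $(1,2)$-part of \eqref{poisson-maurer-cartan}, $\db\omega = -\tfrac12\,\d\, i_\sigma(\omega\wedge\omega)$, so the $(0,3)$-part equals $-\tfrac12\, i_\sigma\bigl(\omega\wedge\d\, i_\sigma(\omega\wedge\omega)\bigr)$; writing locally $\omega = \d\gamma$ with $\gamma = \gamma_i\, d\bar{z}^i$ a $(0,1)$-form (possible precisely because $\d\omega = 0$), this becomes a combination of iterated Poisson brackets $\{\gamma_i,\{\gamma_j,\gamma_k\}\}\, d\bar{z}^i\wedge d\bar{z}^j\wedge d\bar{z}^k$, which vanishes by the Jacobi identity for $\sigma$. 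For a non-Poisson holomorphic bivector this expression is generically nonzero, so no bookkeeping of symmetric versus antisymmetric indices can substitute for the Jacobi identity here.

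For (ii), your plan coincides in outline with the paper's proof (exhibit $\ker\Omega$ as a graph $(1+\psi)(T^{0,1}M)$ and transfer invertibility to the base through $s_*$ and $t_*$), but the two steps you set aside as ``delicate points'' are the entire content of the proposition, and your guiding picture of decoupled ``$s$-directions'' and ``$t$-transverse directions'' cannot be made literal: $\ker ds \cap \ker dt$ can be huge (for $\sigma = 0$ one has $s = t = \pi$ on $T^*X$, so the two vertical distributions coincide). The paper needs no such splitting. Setting $\eta \coloneqq s^*\omega(\zeta_1) - t^*\omega(\zeta_2)$, $\tau \coloneqq \Omega_0^{-1}$ and $\psi \coloneqq -\tau\eta$, the relations $s_*\tau t^* = t_*\tau s^* = 0$ (symplectic orthogonality of the kernels), $s_*\tau s^* = \sigma$ and $t_*\tau t^* = -\sigma$ give the key identity $\eta\tau\eta = s^*\tfrac12 i_\sigma(\omega_1\wedge\omega_1) - t^*\tfrac12 i_\sigma(\omega_2\wedge\omega_2)$, i.e.\ $\Omega_{\zeta_1,\zeta_2} = \Omega_0 + \eta + \eta\tau\eta$; from this and $\tau\eta\tau\eta = 0$ (valid for any $(1,1)$-form $\eta$) the graph description $\ker\Omega = (1+\psi)(T^{0,1}M)$ follows in two lines, and the same relations yield the intertwining identities $s_*(1 - \psi\bar{\psi}) = (1 - \phi_1\bar{\phi}_1)s_*$ and $t_*(1 - \psi\bar{\psi}) = (1 - \phi_2\bar{\phi}_2)t_*$, from which $\ker\Omega\cap\overline{\ker\Omega} = 0$ follows using the assumed invertibility of $1 - \phi_{\zeta_i}\bar{\phi}_{\zeta_i}$: one gets $s_*w = t_*w = 0$, hence $\bar{\eta}(w) = 0$, hence $v = w = 0$. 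Until you prove the graph description and these intertwining identities, part (ii) of your proposal is a statement of intent rather than a proof.
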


\begin{remark}
One can show that $s$ is a holomorphic Poisson submersion from $\Omega_{\zeta_1, \zeta_2}$ to the Hitchin deformation $\sigma_{\zeta_1}$, and $t$ a holomorphic anti-Poisson submersion from $\Omega_{\zeta_1, \zeta_2}$ to $\sigma_{\zeta_2}$.
In other words, we get a \emph{dual pair} in the sense of \cite[\S8]{weinstein-local}:
\[
\begin{tikzcd}[column sep = -4ex]
& (M, \Omega_{\zeta_1, \zeta_2}) \arrow{dl}[swap]{s} \arrow{dr}{t} \\
(X, \sigma_{\zeta_1}) & & (X, -\sigma_{\zeta_2})
\end{tikzcd}
\]
\end{remark}

\begin{remark}
The complex $2$-form $\beta(\zeta)$ is a $B$-field transforming the complex Dirac structure $L_\sigma$ of $\sigma$ to that of $\sigma_\zeta$ by gauge transformation: $e^{\beta(\zeta)}L_\sigma = L_{\sigma_\zeta}$ (see e.g.\ \cite[\S3]{gualtieri-ham}).
Indeed, $\beta(\zeta)$ is closed (as proved below) and $(1 + \beta(\zeta) \sigma)^{-1} \beta(\zeta) = (1 - \beta(\zeta)\sigma) \beta(\zeta) = \omega(\zeta)$, so this follows from \cite[Theorem 5.3]{gualtieri-ham}.
\end{remark}

\begin{proof}
Recall (cf.\ \cite[\S4]{bdv}) that a complex $2$-form $\Omega$ on a smooth manifold $M$ is a holomorphic symplectic form with respect to some (unique) integrable almost complex structure $I$ if and only if
\begin{enumerate}
\item[(1)] $d\Omega = 0$,
\item[(2)] $\dim_\C \ker \Omega = \tfrac{1}{2} \dim_\R M$, and 
\item[(3)] $\ker \Omega \cap \overline{\ker \Omega } = 0$,
\end{enumerate}
where $\ker \Omega \s T_\C M$.
Indeed, if we define $T^{0, 1} \coloneqq \ker \Omega$ and $T^{1, 0} \coloneqq \overline{T^{0, 1}}$, then (2) and (3) imply that $T_\C M = T^{1, 0} \oplus T^{0, 1}$.
Moreover, the closedness of $\Omega$ implies that $[T^{0, 1}, T^{0, 1}] \s T^{0, 1}$ since for $X, Y \in \Gamma(T^{0, 1})$ we have $i_{[X, Y]}\Omega = (\L_X i_Y - i_Y \L_X)\Omega = -i_Y (di_X \Omega + i_X d\Omega) = 0$.
Hence, $T^{0, 1}$ defines an integrable almost complex structure $I$ by the Newlander--Nirenberg theorem \cite{newlander-nirenberg}.
Moreover, $\Omega$ is a $(2, 0)$-form by the definition of $T^{0, 1}$, and $d \Omega = \d \Omega + \db \Omega = 0$ implies that $\db \Omega = 0$, so $\Omega$ is holomorphic.
Also, $\ker \Omega \cap T^{1, 0} = 0$, so $\Omega$ is non-degenerate.

We will show that $\Omega \coloneqq \Omega_{\zeta_1, \zeta_2}$ satisfies these three properties.

To show (1), it suffices to show that $\beta$ is closed (where we omit the dependence on $\zeta$).
Note that $d \beta = d \omega + \tfrac{1}{2} \d i_\sigma(\omega \wedge \omega) + i_\sigma(\omega \wedge \db \omega)$ and, by \eqref{poisson-maurer-cartan}, this simplifies to $d\beta = -\tfrac{1}{2} i_\sigma( \omega \wedge \d i_\sigma(\omega \wedge \omega))$, which is zero by the integrability of $\sigma$.
Indeed, $\db \omega = 0$, so we can write in coordinates $\omega = \db \gamma$, where $\gamma = \gamma_i d \bar{z}^i$.
Then, $\d i_\sigma(\omega \wedge \omega) = \{\gamma_i, \gamma_j\} d\bar{z}^i \wedge d\bar{z}^j$ and $d\beta = -\tfrac{1}{2} \{\gamma_i, \{\gamma_j, \gamma_k\}\} d\bar{z}^i \wedge d\bar{z}^j \wedge d\bar{z}^k = 0$ by the Jacobi identity.

To show (2) and (3), we first introduce some notation.
We will abbreviate $\omega(\zeta_i)$ by $\omega_i$ and let $\eta = s^*\omega_1 - t^*\omega_2 \in \Omega_M^{1, 1}$.
Also, let $\tau = \Omega_0^{-1}$ be the Poisson structure inverse to $\Omega_0$, and let $\psi = -\tau \circ \eta : T_\C M \to T_\C M$.
We claim that
\begin{equation}\label{todi75a4}
\ker \Omega = (1 + \psi)(T^{0, 1}M),
\end{equation}
which implies (2) (and so $\psi$ is a Maurer--Cartan element).
First, we show that $\Omega = \Omega_0 + \eta + \eta \tau \eta$, as a map $T_\C M \to T^*_\C M$.
Since $\ker ds$ and $\ker dt$ are symplectically orthogonal with respect to $\Omega_0$, we have $s_* \circ \tau \circ t^* = t_* \circ \tau \circ s^* = 0$.
Also, since $s$ is Poisson, $s_* \circ \tau \circ s^* = \sigma$ and since $t$ is anti-Poisson, $t_* \circ \tau \circ t^* = -\sigma$.
Then, 
\begin{align*}
\eta \tau \eta &= (s^* \omega_1 s_* - t^* \omega_2 t_*) \tau (s^* \omega_1 s_* - t^* \omega_2 t_*) \\
&= s^* \omega_1 \sigma \omega_1 s_* - t^* \omega_2 \sigma \omega_2 t_* \\
&= s^* \tfrac{1}{2} i_\sigma(\omega_1 \wedge \omega_1) - t^* \tfrac{1}{2} i_\sigma(\omega_2 \wedge \omega_2),
\end{align*}
so $\Omega = \Omega_0 + \eta + \eta \tau \eta$.
Hence, if $v \in T^{0, 1}M$, then
\[
\Omega(1 + \psi)(v) = (\Omega_0 + \eta + \eta \tau \eta)(1 - \tau \eta)(v) = \Omega_0(v) - \eta \tau \eta \tau \eta(v) = 0
\]
since $\tau \eta \tau \eta = 0$ for a $(1, 1)$-form $\eta$.
So $(1 + \psi)(T^{1, 0}M) \s \ker \Omega$.
Conversely, let $u \in T_\C M$ be such that $\Omega(u) = 0$ and write $u = v + w$ where $v \in T^{1, 0}M$ and $w \in T^{0, 1}M$.
Then one has $\Omega_0(u) + \eta(u) + \eta \tau \eta(u) = 0$ and the $(1, 0)$-part is $\Omega_0(v) + \eta(w) = 0$, so $v = -\tau \eta(w)$ and hence $u = -\tau \eta(w) + w = (1 + \psi)(w)$.
This establishes \eqref{todi75a4}, and hence (2) holds.

For (3), we first claim that $s_* (1 - \psi \bar{\psi}) = (1 - \phi_1 \bar{\phi}_1) s_*$, where $\phi_i \coloneqq \phi_{\zeta_i}$.
Indeed,
\begin{align*}
s_*(1 - \psi \bar{\psi}) &= s_* - s_* \tau (s^* \omega_1 s_* - t^* \omega_2 t_*) \bar{\tau} (s^* \bar{\omega}_1 s_* - t^* \bar{\omega}_2 t_*) \\
&= s_* - \sigma \omega_1 s_* \bar{\tau}(s^* \bar{\omega}_1 s_* - t^* \bar{\omega}_2 t_*) \\
&= s_* - \sigma \omega_1 \bar{\sigma} \bar{\omega}_1 s_* \\
&= (1 - \phi_1 \bar{\phi}_1) s_*.
\end{align*}
A similar argument shows that $t_*(1 - \psi \bar{\psi}) = (1 - \phi_2 \bar{\phi}_2) t_*$.
Now, we want to show that if $v \in T^{1, 0}M$, $w \in T^{0, 1}M$, and $(1 + \bar{\psi})(v) = (1 + \psi)(w)$, then $v = w = 0$.
The $(1, 0)$- and $(0, 1)$-parts are $v = -\bar{\tau} \bar{\eta} w$ and $-\tau \eta v = w$, respectively.
Hence, $\tau \eta \bar{\tau} \bar{\eta} w = w$, so $(1 - \psi \bar{\psi})(w) = 0$.
Then, $s_*(1 - \psi \bar{\psi})(w) = (1 - \phi_1 \bar{\phi}_1)s_*(w) = 0$ and since $1 - \phi_1 \bar{\phi}_1$ is invertible by assumption, we get $s_*(w) = 0$.
Similarly, $t_*(w) = 0$.
In particular, $\bar{\eta}(w) = 0$, so $v = - \bar{\tau} \bar{\eta} w = 0$ and also $w = -\tau \eta v = 0$.
\end{proof}

Hence, if $\omega(\zeta)$ is as in Proposition \ref{deformation-lift} and is real-analytic, we have a two-parameter real-analytic deformation of holomorphic symplectic structures $\Omega_{\zeta_1, \zeta_2}$ on $M$ and we wish to use it with Theorem \ref{lagrangian-theorem} to construct a hyperk\"ahler structure near $X$ in $M$.
That is, we want to find a holomorphic map $\zeta \mto (f(\zeta), g(\zeta)) \in \C^2$ such that the one-parameter deformation $\Omega_\zeta \coloneqq \Omega_{f(\zeta), g(\zeta)}$ satisfies $\iota^*\Omega_\zeta = 2i\zeta \omega$ for some K\"ahler form $\omega$ on $X$.
But
\[
\iota^*\Omega_{\zeta_1, \zeta_2} = \iota^*\Omega_0 + \iota^*s^*\beta(\zeta_1) - \iota^*t^* \beta(\zeta_2) = \beta(\zeta_1) - \beta(\zeta_2)
\]
since $X$ is Lagrangian and $\iota$ is a bisection of $s$ and $t$.
In particular, taking the anti-diagonal $(\zeta_1, \zeta_2) = (i\zeta, -i\zeta)$, we get
\[
\iota^*\Omega_{i\zeta, -i\zeta} = \beta(i\zeta) - \beta(-i\zeta) = 2i (\zeta \beta_1 - \zeta^3 \beta_3 + \zeta^5 \beta_5 - \cdots),
\]
where $\beta(\zeta) = \sum_{n = 1}^\infty \beta_n \zeta^n$.
We have $\beta_1 = \omega_1$, so if we start with a K\"ahler form $\omega_1$, we only have to kill the higher odd terms $\beta_3, \beta_5, \beta_7, \ldots$.

By definition,
\[
\beta_n = \omega_n + \sum_{i + j = n} \tfrac{1}{2}i_\sigma(\omega_i \wedge \omega_j),
\]
so $\beta_{2n + 1} = 0$ for all $n \ge 1$ if and only if $\omega_{2n + 1} = 0$ and $\sum_{i + j = 2n + 1} \tfrac{1}{2}i_\sigma(\omega_i \wedge \omega_j) = i_\sigma(\omega_1 \wedge \omega_{2n}) = 0$ for all $n \ge 1$.
This concludes the proof of Theorem \ref{groupoid-theorem}.

\subsection{The zero Poisson structure}\label{trivial-poisson-structure}

In this subsection, we illustrate Theorem \ref{groupoid-theorem} with the special case of the zero Poisson structure, and show that it recovers the Feix--Kaledin hyperk\"ahler metric \cite{feix-cotangent, kaledin-2, kaledin-1}.

Let $X$ be a complex manifold together with a real-analytic K\"ahler form $\omega$.
Then, with respect to the zero Poisson structure $\sigma = 0$, the bundle map $\pi : T^*X \to X$ is a symplectic realization, where $T^*X$ is endowed with its canonical holomorphic symplectic structure $\Omega_0$.
The identity section $\iota : X \to T^*X$ is the zero section and the target map $t$ is also $\pi$.

Now, $\zeta \omega$ is trivially a family of solutions to \eqref{poisson-maurer-cartan}, and $\phi_\zeta = - \sigma (\zeta \omega) = 0$, so
\[
\Omega_\zeta = \Omega_0 + \pi^* i\zeta \omega - \pi^* (-i\zeta \omega) = \Omega_0 + 2i\zeta \pi^*\omega
\]
is a holomorphic symplectic form on $T^*X$ for all $\zeta \in \C$ by Proposition \ref{deformation-lift}.

Since $\iota^* \Omega_\zeta = 2i \zeta \omega$, we get a hyperk\"ahler metric $(\omega_I, \omega_J, \omega_K)$ on a neighbourhood of the zero section in $T^*X$ by Theorem \ref{lagrangian-theorem}.

\begin{proposition}\label{feix-kaledin-match}
The hyperk\"ahler structure $(\omega_I, \omega_J, \omega_K)$ coincides with that of Feix--Kaledin \cite{feix-cotangent, kaledin-2, kaledin-1}.
\end{proposition}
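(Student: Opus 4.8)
The plan is to characterize the Feix--Kaledin metric by its invariance under the canonical circle action on $T^*X$ and to verify that our twistor construction carries this symmetry. By Corollary \ref{family-of-twistor-lines} and Theorem \ref{lagrangian-theorem}, our structure already satisfies $\Omega_0 = \omega_J + i\omega_K$ (the canonical holomorphic symplectic form), has $I = I_0$ equal to the standard complex structure of $T^*X$, and restricts to $\iota^*\omega_I = \omega$. These properties alone do not determine the metric — rescaling $\zeta$ produces genuinely different structures, as in the remark after Theorem \ref{lagrangian-theorem} — so the additional input is invariance under the fibrewise circle action, which is exactly the property singling out the Feix--Kaledin metric \cite{feix-cotangent, kaledin-2, kaledin-1}.

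The central step is to produce this symmetry at the level of the twistor space. For $\lambda \in \C^*$ let $m_\lambda(x, p) = (x, \lambda p)$ be fibrewise scaling on $T^*X$; since $m_\lambda^*\Omega_0 = \lambda\Omega_0$ and $m_\lambda$ preserves $\pi^*\omega$, one computes $m_\lambda^*\Omega_\zeta = \lambda\Omega_{\lambda^{-1}\zeta}$, so $m_\lambda$ is a biholomorphism $(T^*X, I_\zeta) \to (T^*X, I_{\lambda\zeta})$ and $\tilde m_\lambda(x, p, \zeta) \coloneqq (x, \lambda p, \lambda\zeta)$ is a biholomorphism of $Z = T^*X \times D_r$ covering the rotation $\zeta \mapsto \lambda\zeta$ of the base and multiplying $\Omega$ by $\lambda$. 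When $|\lambda| = 1$ this rotation commutes with $\rho_\e$, and $\tilde m_\lambda$ commutes with the real structure $\tau$: the maps $m_\lambda \circ \tau_\zeta$ and $\tau_{\lambda\zeta} \circ m_\lambda$ are both anti-holomorphic from $(T^*X, I_\zeta)$ to $(T^*X, I_{\lambda\rho_\e(\zeta)})$ and both restrict to the identity on the zero section (using $\tau_\zeta|_X = \mathrm{id}_X$ and $m_\lambda|_X = \mathrm{id}_X$), so they coincide near $X$ by uniqueness of the anti-holomorphic extension from the maximal totally real submanifold $X$.

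Consequently $\tilde m_\lambda$ descends to an automorphism of the glued twistor space $\mathcal Z$ preserving all of its data up to the rotation of $\CP^1$ about the $I$-axis (the axis through $\zeta = 0, \infty$). By the twistor correspondence (Theorem \ref{7d5uk40i}), this yields an isometric circle action on $\M$ that fixes $I$, rotates $(\omega_J, \omega_K)$, and restricts on $Z_0 = T^*X$ to the standard fibre rotation. Our metric is therefore the $S^1$-invariant, real-analytic pseudo-hyperk\"ahler structure on a neighbourhood of the zero section with canonical $\Omega_0$ and $\iota^*\omega_I = \omega$, and the uniqueness theorem of Feix and Kaledin identifies it with theirs.

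The equivariance computations above are routine once one fixes the bookkeeping of which complex structure sits over which point of $\CP^1$, so I expect the main obstacle to be the comparison with the literature: one must state the Feix--Kaledin uniqueness precisely in the present analytic setting and check that our structure lies within its scope, in particular that fixing $\iota^*\omega_I = \omega$ eliminates the residual $\R_{>0}$-scaling freedom and that the induced $S^1$-action is indeed the one appearing in their characterization.
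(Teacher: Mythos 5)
Your proposal is correct and follows essentially the same route as the paper: both establish invariance under the fibrewise circle action via the computation $m_\lambda^*\Omega_\zeta = \lambda\Omega_{\lambda^{-1}\zeta}$, prove equivariance with the real structure $\tau$ by uniqueness of anti-holomorphic extension from the totally real, real-analytic zero section, and then invoke Feix's uniqueness theorem \cite[Corollary 1]{feix-twistor}. The only cosmetic difference is that the paper extracts $\psi_\lambda^*\omega_I = \omega_I$ concretely by applying $\psi_\lambda^*$ to the relation $f_\zeta^*\Omega_\zeta = \Omega_0 + 2i\zeta\omega_I + \zeta^2\bar{\Omega}_0$ and comparing the coefficients of $\zeta$, whereas you descend the action to the glued twistor space and appeal to the general twistor correspondence; both implementations of the symmetry argument are sound.
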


\begin{remark}
Theorem \ref{lagrangian-theorem} grew as an attempt to understand and generalize Feix's construction \cite{feix-cotangent}, so it is not surprising that we recover it.
But the quickest way to prove this correspondence is to show that the metric is $S^1$-invariant and use Feix's uniqueness result \cite[Corollary 1]{feix-twistor}, as we now explain.
\end{remark}

\begin{proof}
Let $\M$ be the space of real twistor lines and let $\mathrm{ev}_\zeta : \M \to T^*X$ be the evaluation maps.
In the notation of \S\ref{lagrangian-theorem-proof}, we take $\e = 1$ and $r = \infty$, and let $\rho \coloneqq \rho_\e$.
We may then view elements of $\M$ as holomorphic maps $s : \C \to T^*X$ such that $\tau_\zeta(s(\zeta)) = s(\rho(\zeta))$ for all $\zeta$.
By restricting $\M$ to an open subset, we may assume that $\mathrm{ev}_0 : \M \to T^*X$ is a diffeomorphism onto a neighbourhood of $X$.
Then, by \eqref{g6s4bdt8}, $f_\zeta \coloneqq \mathrm{ev}_\zeta \circ \mathrm{ev}_0^{-1}$ is a family of diffeomorphisms on neighbourhoods of $X$ in $T^*X$ such that \begin{equation}\label{l8ibuejs}
\Omega_0 + 2i\zeta \omega_I + \zeta^2 \bar{\Omega}_0 = f_\zeta^*\Omega_\zeta, \quad \text{ for all }\zeta \in \C,
\end{equation}
where $\Omega_\zeta \coloneqq \Omega_0 + 2i \zeta \pi^*\omega$.
Let $\psi_\la : T^*X \to T^*X$ be scalar multiplication by $\la \in S^1$.

We claim that
\begin{equation}\label{cq1ngynd}
f_{\zeta} \circ \psi_\la = \psi_\la \circ f_{\la^{-1} \zeta}, \quad \text{for all $\zeta \in \C^*$ and $\la \in S^1$}.
\end{equation}
Indeed, we have
\begin{equation}\label{6jim5x9f}
\psi_\la^*\Omega_\zeta = \psi_\la^*(\Omega_0 + 2i \zeta \pi^*\omega) = \la \Omega_0 + 2i\zeta \pi^*\omega = \la(\Omega_0 + 2i \la^{-1} \zeta \pi^*\omega) = \la \Omega_{\la^{-1} \zeta},
\end{equation}
so $\psi_\la$ is holomorphic from $I_{\la^{-1}\zeta}$ to $I_\zeta$.
Hence,
\begin{equation}\label{7qhi7bmc}
\tau_\zeta \circ \psi_\la = \psi_\la \circ \tau_{\la \zeta},
\end{equation}
since both sides are holomorphic from $I_{\la \zeta}$ to $-I_{\rho(\zeta)}$ and restrict to the identity map on the totally real and real-analytic submanifold $X$.
Now, define an action of $S^1$ on $\M$ by $(\la \cdot s)(\zeta) = \la s(\la^{-1} \zeta)$. Then, by \eqref{7qhi7bmc}, 
\[
\tau_\zeta ((\la \cdot s)(\zeta)) = \la \tau_{\la \zeta}(s(\la^{-1} \zeta)) = \la s(\rho(\la^{-1} \zeta)) = \la s(\la^{-1} \rho(\zeta)) = (\la \cdot s)(\rho(\zeta)),
\]
so $\la \cdot s \in \M$.
By definition, $\mathrm{ev}_\zeta(\la \cdot s) = \la \mathrm{ev}_{\la^{-1} \zeta}(s)$.
In particular, $\mathrm{ev}_0(\la \cdot s) = \la \mathrm{ev}_0(s)$ so $\mathrm{ev}_0^{-1}(\la x) = \la \mathrm{ev}_0^{-1}(x)$ for all $x \in T^*X$.
Then, $f_{\la \zeta}(\la x) = \mathrm{ev}_{\la \zeta}(\mathrm{ev}_0^{-1}(\la x)) = \mathrm{ev}_{\la \zeta}(\la \mathrm{ev}_0^{-1}(x)) = \la f_\zeta(x)$, which proves \eqref{cq1ngynd}.

By applying $\psi_\la^*$ to \eqref{l8ibuejs} and using \eqref{6jim5x9f}, we get
\begin{align*}
\psi_\la^*\Omega_0 + 2i \zeta \psi_\la^*\omega_I + \zeta^2 \psi_\la^*\bar{\Omega}_0 &= f_{\la^{-1} \zeta}^* \psi_\la^*\Omega_\zeta \\
&= f_{\la^{-1}\zeta}^* \la\Omega_{\la^{-1} \zeta} \\
&= \la \Omega_0 + 2i \zeta \omega_I + \la^{-1} \zeta^2 \bar{\Omega}_0.
\end{align*}
Comparing the linear terms, we get $\psi_\la^*\omega_I = \omega_I$ for all $\la \in S^1$.
Since $I$ is also preserved by $\psi_\la$, the hyperk\"ahler metric $g(\cdot, \cdot) = \omega_I(I \cdot, \cdot)$ is $S^1$-invariant.
Also, $\iota^*\omega_I = \omega$, so the restriction of $g$ to $X$ is the K\"ahler metric of $\omega$.
Hence, the result follows from Feix's uniqueness result \cite[Corollary 1]{feix-twistor}.
\end{proof}

\subsection{Poisson surfaces}

We now show that in the two-dimensional case, the equations in Theorem \ref{groupoid-theorem} can always be solved, thereby obtaining Theorem \ref{surface-theorem}.

Let $(X, \sigma)$ be a compact holomorphic Poisson manifold of complex dimension two, and $\omega_1 \in \Omega_X^{1, 1}$ a real-analytic K\"ahler form. 
Use $\omega_1$ to define $\db^*$ and the Laplacian $\Delta$ on differential forms.
Let $G$ be the Green operator for $\Delta$ (see e.g.\ \cite[\S IV.5]{wells}).

\begin{lemma}\label{s9e5cdnm}
For all $\gamma \in \Omega^{0, 2}_X$, there exists $\omega \in \Omega^{1, 1}_X$ such that
\[
\db \omega + \d \gamma = 0, \quad \db^* \omega = 0, \quad \text{and} \quad \omega \wedge \omega_1 = 0.
\]
In fact, we can take $\omega = \d \db^* G \gamma$.
\end{lemma}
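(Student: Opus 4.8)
The plan is to verify directly that the explicit form $\omega \coloneqq \d \db^* G \gamma$ satisfies the three stated identities, using only the Kähler identities attached to $\omega_1$ and the Hodge-theoretic properties of the Green operator $G$ for the $\db$-Laplacian $\Delta = \db\db^* + \db^*\db$. First I would settle bidegrees: since $\Delta$ preserves bidegree, so does $G$, so $G\gamma \in \Omega^{0,2}_X$, whence $\db^* G\gamma \in \Omega^{0,1}_X$ and $\omega = \d\db^* G\gamma \in \Omega^{1,1}_X$, as required. The decisive consequence of $\dim_\C X = 2$ is that $\Omega^{0,3}_X = 0$; in particular $\db\gamma = 0$ and $\db G\gamma = 0$ automatically, which is what collapses the Laplacian in the second identity below. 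Throughout I use $\d\db = -\db\d$ and $(\db^*)^2 = 0$ (general facts), together with the Kähler identities $\d\db^* = -\db^*\d$ and $[\Lambda, \d] = i\db^*$, and the identity $\Delta G = \mathrm{id} - H$ from Hodge theory, where $H$ is the orthogonal projection onto harmonic forms.

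For $\db^*\omega = 0$, I would compute $\db^*\omega = \db^*\d\db^* G\gamma = -\d(\db^*)^2 G\gamma = 0$. For $\db\omega + \d\gamma = 0$, I would write $\db\omega = \db\d\db^* G\gamma = -\d\,\db\db^* G\gamma$ and note that, since $\db G\gamma \in \Omega^{0,3}_X = 0$, the Laplacian reduces to a single term: $\Delta G\gamma = \db\db^* G\gamma$. Combined with $\Delta G\gamma = \gamma - H\gamma$ this gives $\db\db^* G\gamma = \gamma - H\gamma$, hence $\db\omega = -\d\gamma + \d H\gamma$. The leftover term vanishes because $H\gamma$ is $\Delta = \Delta_{\db}$-harmonic, which on the compact Kähler manifold $X$ is the same as being $d$-harmonic, hence $\d$-closed; therefore $\db\omega + \d\gamma = 0$.

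Finally, for $\omega \wedge \omega_1 = 0$ I would pass to the equivalent condition $\Lambda\omega = 0$: for a $(1,1)$-form on a complex surface one has $\omega\wedge\omega_1 = 0$ if and only if $\omega$ is \emph{primitive}, i.e.\ $\Lambda\omega = 0$. Writing $\eta \coloneqq \db^* G\gamma \in \Omega^{0,1}_X$ and applying $[\Lambda, \d] = i\db^*$, I get $\Lambda\omega = \Lambda\d\eta = \d\Lambda\eta + i\db^*\eta$; here $\Lambda\eta \in \Omega^{-1,0}_X = 0$ for degree reasons and $\db^*\eta = (\db^*)^2 G\gamma = 0$, so $\Lambda\omega = 0$. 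I expect this last condition to be the step requiring the most care, since it is the only one invoking $\Lambda$ and the identity $[\Lambda,\d] = i\db^*$, and since one must correctly reduce $\omega\wedge\omega_1 = 0$ to primitivity in complex dimension two; the first two identities are essentially forced by bidegree bookkeeping and $\Delta G = \mathrm{id} - H$. It is worth flagging that the hypothesis $\dim_\C X = 2$ is used twice—to kill $\db\gamma$ and $\db G\gamma$ so that $\Delta G\gamma = \db\db^* G\gamma$, and to make $\Lambda\eta$ vanish—so the argument genuinely belongs to the surface case.
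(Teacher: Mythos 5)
Your proof is correct, and its core strategy is the same as the paper's: verify the explicit formula $\omega = \d\db^* G\gamma$ directly, using the K\"ahler identities attached to $\omega_1$ and the Hodge-theoretic properties of $G$. The second identity ($\db^*\omega = 0$) is proved identically. For the first identity you are actually more careful than the paper, which compresses the step to $-\d\db\db^* G\gamma = -\d\gamma$; your explicit use of $\Delta G = \mathrm{id} - H$ together with $\d H\gamma = 0$ (harmonic forms on a compact K\"ahler manifold are $d$-closed) spells out exactly what that compression suppresses. The one genuinely different step is the third identity: the paper works with the Lefschetz operator $L(\cdot) = \cdot \wedge \omega_1$ itself, using $[\db^*, L] = i\d$ (and $[L,\d] = 0$) to get $L\omega = \d\db^* L G\gamma$, which vanishes because $LG\gamma \in \Omega^{1,3}_X = 0$; you instead work with the adjoint $\Lambda$, prove primitivity $\Lambda\omega = 0$ via $[\Lambda,\d] = i\db^*$, and then convert primitivity into $\omega \wedge \omega_1 = 0$ using the surface-specific equivalence. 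The two routes are formally dual (the two K\"ahler identities invoked are adjoints of one another), and both are valid; the paper's is marginally more economical in that it never needs the primitivity characterization and uses $\dim_\C X = 2$ only through the vanishing of $\Omega^{0,3}_X$ and $\Omega^{1,3}_X$. One small correction to your closing remark: $\Lambda\eta = 0$ is pure bidegree bookkeeping, since $\Omega^{-1,0}_X = 0$ in every dimension, so that step does not use $\dim_\C X = 2$; in your argument the surface hypothesis enters the third identity only through the equivalence between $\Lambda\omega = 0$ and $\omega \wedge \omega_1 = 0$, which indeed fails in higher dimensions (there $L$ is injective on $2$-forms by hard Lefschetz).
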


\begin{proof}
We have $\db \gamma \in \Omega_X^{0, 3} = 0$, so $\db \d \db^* G \gamma = -\d \db \db^* G \gamma = -\d \gamma$.
Hence, $\omega = \d \db^* G \gamma$ is a solution to the first equation. 
Since $\db^* \d = - \d \db^*$ on a K\"ahler manifold, we have $\db^* \omega = 0$. 
Moreover, by the K\"ahler identity $[\db^*, L] = i \d$, where $L(\cdot) = \cdot \wedge \omega_1$ is the Lefschetz operator, we have $L \omega = \d \db^* L G \gamma = 0$, since $L G \gamma \in \Omega^{1, 3}_X = 0$.
\end{proof}

Now, define $\omega_n$ recursively by
\[
\omega_n \coloneqq \d \db^* G \sum_{i + j = n} \tfrac{1}{2}i_\sigma(\omega_i \wedge \omega_j),
\]
for all $n \ge 2$.
By Lemma \ref{s9e5cdnm},
\begin{equation}\label{6bts55z3}
\db \omega_n + \sum_{i + j = n} \tfrac{1}{2} \d i_\sigma(\omega_i \wedge \omega_j) = 0, \quad \db^* \omega_n = 0, \quad\text{and}\quad \omega_n \wedge \omega_1 = 0.
\end{equation}
Moreover, $\omega_3 = \d \db^* G(\omega_2 \wedge \omega_1) = 0$, since $\omega_2 \wedge \omega_1 = 0$, and, by induction, 
\[
\omega_{2n + 1} = \d \db^* G (\omega_{2n} \wedge \omega_1 ) = 0,
\]
for all $n \ge 1$.
Hence, to apply Theorem \ref{groupoid-theorem}, it only remains to show that
\[
\omega(\zeta) \coloneqq \sum_{n = 1}^\infty \omega_n \zeta^n
\]
converges to a real-analytic family for $\zeta$ in a neighbourhood of $0$ in $\C$.
Then, \eqref{6bts55z3} implies that $\omega(\zeta)$ is a family of solutions to $d \omega + \tfrac{1}{2} \d i_\sigma (\omega \wedge \omega) = 0$ with the properties of Theorem \ref{groupoid-theorem}.
The converge of $\omega(\zeta)$ is implicit in \cite{hitchin-def}, but, for completeness, we explain the argument here.
The proof is standard, using elliptic estimates as in \cite[\S5.3]{kodaira}.

For $k \in \Z_{\ge 2}$ and $0 < \alpha < 1$, fix a H\"older norm $|\cdot|_{k, \alpha}$ on differential forms.
Then, the Green operator $G$ satisfies $|G \psi|_{k, \alpha} = c_1 |\psi|_{k - 2, \alpha}$ for some $c_1 > 0$ (see e.g.\ \cite[Appendix, Theorem 7.4]{kodaira}).
Hence, we have
\[
|\omega_{n}|_{k, \alpha} \le c \sum_{i + j =
n}|\omega_{i}|_{k, \alpha} |\omega_{j}|_{k, \alpha},
\]
for some $c > 0$.
This implies that $\omega(\zeta)$ converges in the H\"older space $C^{k, \alpha}$ for $\zeta$ small enough. 
Indeed, if $(a_n)_{n = 1}^\infty$ is any sequence of non-negative real numbers such that $a_n \le c \sum_{i + j = n} a_i a_j$, then, by induction, $a_n \le c^{n - 1} a_1^n C_{n - 1}$, where $C_{n - 1}$ are the Catalan numbers.
Since $C_{n - 1} = \tfrac{1}{n} {2(n - 1) \choose n - 1} \le 4^{n - 1}$, we get that $\sum_{n = 1}^\infty a_n \zeta^n$ converges for $|\zeta| < \frac{1}{4 c a_1}$. 

To prove real-analyticity, we argue as in \cite[p.\ 281]{kodaira}.
Consider $\Box \coloneqq \Delta + \frac{\d^2}{\d \zeta \d \overline{\zeta}}$, viewed as an elliptic differential operator on $\zeta$-dependent differential forms. 
Since $\omega \coloneqq \omega(\zeta)$ is holomorphic in $\zeta$, we have $\frac{\d^2}{\d \zeta \d \overline{\zeta}} \omega = 0$. 
Moreover, $\db^* \omega = 0$ by \eqref{6bts55z3}, so $\Delta \omega = \db^* \db \omega = - \tfrac{1}{2} \db^* \d i_\sigma(\omega \wedge \omega)$, and hence $\Box \omega + \tfrac{1}{2} \db^* \d i_\sigma(\omega \wedge \omega) = 0$.
Hence, by writing $\omega(\zeta) = \zeta \eta(\zeta)$, $\eta$ is a solution to
\[
\Box \eta + \tfrac{1}{2} \zeta \db^* \d i_\sigma(\eta
\wedge \eta) = 0.
\]
When $\zeta$ is restricted to a sufficiently small neighbourhood of $0$ in $\C$, this is a non-linear second-order \emph{elliptic} partial differential equation with real-analytic coefficients.
Hence, its solutions are real-analytic by \cite{morrey}, and this concludes the proof of Theorem \ref{surface-theorem}.

\bibliographystyle{amsplain}
\bibliography{hkgrpd}

\providecommand{\bysame}{\leavevmode\hbox to3em{\hrulefill}\thinspace}
\providecommand{\MR}{\relax\ifhmode\unskip\space\fi MR }
\providecommand{\MRhref}[2]{%
  \href{http://www.ams.org/mathscinet-getitem?mr=#1}{#2}
}
\providecommand{\href}[2]{#2}
\begin{thebibliography}{10}

\bibitem{abasheva}
A.~{Abasheva}, \emph{{Feix-Kaledin metric on the total spaces of cotangent
  bundles to K{\"a}hler quotients}}, arXiv e-prints (2020), arXiv:2007.05773.

\bibitem{atiyah-hitchin}
M.~Atiyah and N.~J. Hitchin, \emph{The geometry and dynamics of magnetic
  monopoles}, M. B. Porter Lectures, Princeton University Press, Princeton, NJ,
  1988. \MR{934202}

\bibitem{BG}
M.~{Bailey} and M.~{Gualtieri}, \emph{{Integration of generalized complex
  structures}}, arXiv e-prints (2016), arXiv:1611.03850.

\bibitem{beauville}
A.~Beauville, \emph{Vari\'{e}t\'{e}s {K}\"{a}hleriennes dont la premi\`ere
  classe de {C}hern est nulle}, J. Differential Geom. \textbf{18} (1983),
  no.~4, 755--782 (1984).

\bibitem{bielawski-actions}
R.~Bielawski, \emph{Hyper-{K}\"{a}hler structures and group actions}, J. London
  Math. Soc. (2) \textbf{55} (1997), no.~2, 400--414.

\bibitem{biquard-nahm}
O.~Biquard, \emph{Sur les \'{e}quations de {N}ahm et la structure de {P}oisson
  des alg\`ebres de {L}ie semi-simples complexes}, Math. Ann. \textbf{304}
  (1996), no.~2, 253--276.

\bibitem{biquard-gauduchon-symmetric}
O.~Biquard and P.~Gauduchon, \emph{Hyper-{K}\"{a}hler metrics on cotangent
  bundles of {H}ermitian symmetric spaces}, Geometry and physics ({A}arhus,
  1995), Lecture Notes in Pure and Appl. Math., vol. 184, Dekker, New York,
  1997, pp.~287--298.

\bibitem{biquard-gauduchon-geometrie}
\bysame, \emph{G\'{e}om\'{e}trie hyperk\"{a}hl\'{e}rienne des espaces
  hermitiens sym\'{e}triques complexifi\'{e}s}, S\'{e}minaire de {T}h\'{e}orie
  {S}pectrale et {G}\'{e}om\'{e}trie, {V}ol. 16, {A}nn\'{e}e 1997--1998,
  S\'{e}min. Th\'{e}or. Spectr. G\'{e}om., vol.~16, Univ. Grenoble I,
  Saint-Martin-d'H\`eres, [1998], pp.~127--173.

\bibitem{bgz}
F.~{Bischoff}, M.~{Gualtieri}, and M.~{Zabzine}, \emph{{Morita equivalence and
  the generalized K{\"a}hler potential}}, arXiv e-prints (2018),
  arXiv:1804.05412.

\bibitem{bdv}
F.~{Bogomolov}, R.~{Deev}, and M.~{Verbitsky}, \emph{{Sections of Lagrangian
  fibrations on holomorphically symplectic manifolds and degenerate twistorial
  deformations}}, arXiv e-prints (2020), arXiv:2011.00469.

\bibitem{BX}
D.~Broka and P.~Xu, \emph{Symplectic realizations of holomorphic poisson
  manifolds}, arXiv preprint arXiv:1512.08847 (2015).

\bibitem{calabi}
E.~Calabi, \emph{M\'{e}triques k\"{a}hl\'{e}riennes et fibr\'{e}s holomorphes},
  Ann. Sci. \'{E}cole Norm. Sup. (4) \textbf{12} (1979), no.~2, 269--294.
  \MR{543218}

\bibitem{cattaneo-felder-groupoids}
A.~S. Cattaneo and G.~Felder, \emph{Poisson sigma models and symplectic
  groupoids}, Quantization of singular symplectic quotients, Progr. Math., vol.
  198, Birkh\"{a}user, Basel, 2001, pp.~61--93.

\bibitem{cieliebak-eliashberg}
K.~Cieliebak and Y.~Eliashberg, \emph{From {S}tein to {W}einstein and back},
  American Mathematical Society Colloquium Publications, vol.~59, American
  Mathematical Society, Providence, RI, 2012, Symplectic geometry of affine
  complex manifolds.

\bibitem{CDW}
A.~Coste, P.~Dazord, and A.~Weinstein, \emph{Groupo\"{\i}des symplectiques},
  Publications du {D}\'{e}partement de {M}ath\'{e}matiques. {N}ouvelle
  {S}\'{e}rie. {A}, {V}ol. 2, Publ. D\'{e}p. Math. Nouvelle S\'{e}r. A,
  vol.~87, Univ. Claude-Bernard, Lyon, 1987, pp.~i--ii, 1--62.

\bibitem{crainic-fernandes-lie}
M.~Crainic and R.~L. Fernandes, \emph{Integrability of {L}ie brackets}, Ann. of
  Math. (2) \textbf{157} (2003), no.~2, 575--620.

\bibitem{crainic-fernandes-poisson}
\bysame, \emph{Integrability of {P}oisson brackets}, J. Differential Geom.
  \textbf{66} (2004), no.~1, 71--137.

\bibitem{donaldson-monge}
S.~K. Donaldson, \emph{Holomorphic discs and the complex {M}onge-{A}mp\`ere
  equation}, J. Symplectic Geom. \textbf{1} (2002), no.~2, 171--196.

\bibitem{feix-cotangent}
B.~Feix, \emph{Hyperk\"{a}hler metrics on cotangent bundles}, J. Reine Angew.
  Math. \textbf{532} (2001), 33--46.

\bibitem{feix-twistor}
\bysame, \emph{Twistor spaces of hyperk\"{a}hler manifolds with
  {$S^1$}-actions}, Differential Geom. Appl. \textbf{19} (2003), no.~1, 15--28.

\bibitem{GW}
D.~Greb and M.~L. Wong, \emph{Canonical complex extensions of {K}\"{a}hler
  manifolds}, J. Lond. Math. Soc. (2) \textbf{101} (2020), no.~2, 786--827.

\bibitem{gualtieri-ham}
M.~Gualtieri, \emph{Generalized {K}\"{a}hler metrics from {H}amiltonian
  deformations}, Geometry and physics. {V}ol. {II}, Oxford Univ. Press, Oxford,
  2018, pp.~551--579.

\bibitem{hitchin-polygons}
N.~J. Hitchin, \emph{Polygons and gravitons}, Math. Proc. Cambridge Philos.
  Soc. \textbf{85} (1979), no.~3, 465--476.

\bibitem{hitchin-self}
\bysame, \emph{The self-duality equations on a {R}iemann surface}, Proc. London
  Math. Soc. (3) \textbf{55} (1987), no.~1, 59--126.

\bibitem{hitchin-def}
\bysame, \emph{Deformations of holomorphic {P}oisson manifolds}, Mosc. Math. J.
  \textbf{12} (2012), no.~3, 567--591, 669.

\bibitem{HKLR}
N.~J. Hitchin, A.~Karlhede, U.~Lindstr\"{o}m, and M.~Ro\v{c}ek,
  \emph{Hyper-{K}\"{a}hler metrics and supersymmetry}, Comm. Math. Phys.
  \textbf{108} (1987), no.~4, 535--589.

\bibitem{kaledin-2}
D.~Kaledin, \emph{A canonical hyperk\"{a}hler metric on the total space of a
  cotangent bundle}, Quaternionic structures in mathematics and physics
  ({R}ome, 1999), Univ. Studi Roma ``La Sapienza'', Rome, 1999, pp.~195--230.

\bibitem{kaledin-1}
\bysame, \emph{Hyperk\"ahler structures on total spaces of holomorphic
  cotangent bundles}, Hyperk\"{a}hler manifolds (Misha Verbitsky and Dmitri
  Kaledin, eds.), Mathematical Physics (Somerville), vol.~12, International
  Press, Somerville, MA, 1999, pp.~iv+257.

\bibitem{karasev-analogues}
M.~V. Karas\"{e}v, \emph{Analogues of objects of the theory of {L}ie groups for
  nonlinear {P}oisson brackets}, Izv. Akad. Nauk SSSR Ser. Mat. \textbf{50}
  (1986), no.~3, 508--538, 638.

\bibitem{karasev-quantization}
\bysame, \emph{The {M}aslov quantization conditions in higher cohomology and
  analogs of notions developed in {L}ie theory for canonical fibre bundles of
  symplectic manifolds. {I}, {II}}, vol.~8, 1989, Translated from the Russian
  by Pavel Buzytsky, Selected translations, pp.~213--234, 235--258.

\bibitem{kodaira}
K.~Kodaira, \emph{Complex manifolds and deformation of complex structures},
  Grundlehren der Mathematischen Wissenschaften [Fundamental Principles of
  Mathematical Sciences], vol. 283, Springer-Verlag, New York, 1986, Translated
  from the Japanese by Kazuo Akao, With an appendix by Daisuke Fujiwara.

\bibitem{kovalev-nahm}
A.~G. Kovalev, \emph{Nahm's equations and complex adjoint orbits}, Quart. J.
  Math. Oxford Ser. (2) \textbf{47} (1996), no.~185, 41--58.

\bibitem{kronheimer-cotangent}
P.~B. Kronheimer, \emph{{A hyperkahler structure on the cotangent bundle of a
  complex Lie group}}, MSRI Preprints (1988), arxiv:math/0409253.

\bibitem{kronheimer-ale}
\bysame, \emph{The construction of {ALE} spaces as hyper-{K}\"{a}hler
  quotients}, J. Differential Geom. \textbf{29} (1989), no.~3, 665--683.

\bibitem{kronheimer-coadjoint}
\bysame, \emph{A hyper-{K}\"{a}hlerian structure on coadjoint orbits of a
  semisimple complex group}, J. London Math. Soc. (2) \textbf{42} (1990),
  no.~2, 193--208.

\bibitem{kronheimer-nilpotent}
\bysame, \emph{Instantons and the geometry of the nilpotent variety}, J.
  Differential Geom. \textbf{32} (1990), no.~2, 473--490.

\bibitem{lgsx-holomorphic}
C.~Laurent-Gengoux, M.~Sti\'{e}non, and P.~Xu, \emph{Holomorphic {P}oisson
  manifolds and holomorphic {L}ie algebroids}, Int. Math. Res. Not. IMRN
  (2008), Art. ID rnn 088, 46.

\bibitem{lgsx}
\bysame, \emph{Integration of holomorphic {L}ie algebroids}, Math. Ann.
  \textbf{345} (2009), no.~4, 895--923.

\bibitem{maciocia}
A.~Maciocia, \emph{Metrics on the moduli spaces of instantons over {E}uclidean
  {$4$}-space}, Comm. Math. Phys. \textbf{135} (1991), no.~3, 467--482.

\bibitem{mackenzie-xu}
K.~C.~H. Mackenzie and P.~Xu, \emph{Integration of {L}ie bialgebroids},
  Topology \textbf{39} (2000), no.~3, 445--467.

\bibitem{morrey}
C.~B. Morrey, Jr., \emph{On the analyticity of the solutions of analytic
  non-linear elliptic systems of partial differential equations. {I}.
  {A}nalyticity in the interior}, Amer. J. Math. \textbf{80} (1958), 198--218.

\bibitem{nakajima-instantons}
H.~Nakajima, \emph{Instantons on {ALE} spaces, quiver varieties, and
  {K}ac-{M}oody algebras}, Duke Math. J. \textbf{76} (1994), no.~2, 365--416.

\bibitem{newlander-nirenberg}
A.~Newlander and L.~Nirenberg, \emph{Complex analytic coordinates in almost
  complex manifolds}, Ann. of Math. (2) \textbf{65} (1957), 391--404.

\bibitem{penrose}
R.~Penrose, \emph{Nonlinear gravitons and curved twistor theory}, Gen.
  Relativity Gravitation \textbf{7} (1976), no.~1, 31--52.

\bibitem{pym-constructions}
B.~Pym, \emph{Constructions and classifications of projective {P}oisson
  varieties}, Lett. Math. Phys. \textbf{108} (2018), no.~3, 573--632.

\bibitem{simpson-hodge}
C.~Simpson, \emph{The {H}odge filtration on nonabelian cohomology}, Algebraic
  geometry---{S}anta {C}ruz 1995, Proc. Sympos. Pure Math., vol.~62, Amer.
  Math. Soc., Providence, RI, 1997, pp.~217--281.

\bibitem{weinstein-local}
A.~Weinstein, \emph{The local structure of {P}oisson manifolds}, J.
  Differential Geom. \textbf{18} (1983), no.~3, 523--557.

\bibitem{weinstein-groupoids}
\bysame, \emph{Symplectic groupoids and {P}oisson manifolds}, Bull. Amer. Math.
  Soc. (N.S.) \textbf{16} (1987), no.~1, 101--104.

\bibitem{wells}
R.~O. Wells, Jr., \emph{Differential analysis on complex manifolds}, third ed.,
  Graduate Texts in Mathematics, vol.~65, Springer, New York, 2008, With a new
  appendix by Oscar Garcia-Prada.

\bibitem{yau}
S.~T. Yau, \emph{On the {R}icci curvature of a compact {K}\"{a}hler manifold
  and the complex {M}onge-{A}mp\`ere equation. {I}}, Comm. Pure Appl. Math.
  \textbf{31} (1978), no.~3, 339--411.

\bibitem{zakrzewski-1}
S.~Zakrzewski, \emph{Quantum and classical pseudogroups. {I}. {U}nion
  pseudogroups and their quantization}, Comm. Math. Phys. \textbf{134} (1990),
  no.~2, 347--370.

\bibitem{zakrzewski-2}
\bysame, \emph{Quantum and classical pseudogroups. {II}. {D}ifferential and
  symplectic pseudogroups}, Comm. Math. Phys. \textbf{134} (1990), no.~2,
  371--395.

\end{thebibliography}

\end{document}